\newtheorem{theorem}{Theorem}[section]
\newtheorem{lemma}[theorem]{Lemma}
\newtheorem{corollary}[theorem]{Corollary}
\theoremstyle{definition}
\newtheorem{example}[theorem]{Example}
\theoremstyle{remark}
\newtheorem{remark}{Remark}
\numberwithin{equation}{section}
\newcommand{\f}{\frac}
\newcommand{\df}{\dfrac}
\newcommand{\p}{\partial}
\newcommand{\mal}{\mathcal}
\newcommand{\De}{\Delta}
\newcommand{\de}{\delta}
\newcommand{\ovl}{\overline}
\begin{document}
	\begin{frontmatter}
		\title{A linear, mass-conserving, multi-time-step compact block-centered finite difference method for incompressible miscible displacement problem in porous media}
		
		\author[ouc]{Xiaoying Wang} \ead{wxy7121@stu.ouc.edu.cn}
            \author[sdu]{Hongxing Rui} \ead{hxrui@sdu.edu.cn}
		\author[ouc,lab]{Hongfei Fu\corref{Fu}}\ead{fhf@ouc.edu.cn}
		
		\address[ouc]{School of Mathematical Sciences, Ocean University of China, Qingdao, Shandong 266100, China}
            \address[sdu]{School of Mathematics, Shandong University, Jinan, Shandong 250100, China}
		\address[lab]{Laboratory of Marine Mathematics, Ocean University of China, Qingdao, Shandong 266100, China}
		\cortext[Fu]{Corresponding author.}
		
\begin{abstract}
  In this paper, a two-dimensional incompressible miscible displacement model is considered, and a novel decoupled and linearized high-order finite difference scheme is developed, by utilizing the multi-time-step strategy to treat the different time evolutions of concentration and velocity/pressure, and the compact block-centered finite difference approximation for spatial discretization.
  We show that the scheme is mass-conserving, and has second-order temporal accuracy and fourth-order spatial accuracy for the concentration $c$, the velocity $\bm{u}$ and the pressure $p$ simultaneously. The existence and uniqueness of the developed scheme under a rough time-step condition is also proved following the convergence results. Numerical experiments are presented to confirm the theoretical conclusions. Besides, some ’real’ simulations are also tested to show good performance of the proposed scheme, in particular, the viscous fingering phenomenon is verified. 
\end{abstract}
  \begin{keyword}
	Incompressible miscible displacement \sep Mass-preserving  \sep  Compact BCFD method \sep Multi-time-step \sep Error analysis  \sep Numerical experiments
			
           
   \end{keyword}
\end{frontmatter}
	
  \section{Introduction}\label{sec:intro}
    The miscible displacement model characterizes the process of one fluid displacing another within a porous media, and has significant applications in fields such as oil reservoir engineering and groundwater contamination remediation \cite{Ewing83,Bear72}.  This model effectively captures the dynamic interactions between the invading fluid (e.g., injected solvents or contaminants) and the resident fluid (e.g., oil or groundwater), driven by pressure gradients and fluid mixing processes.
    
    The mass conservation of the fluid mixture, combined with the incompressibility condition, Darcy's law, and the mass conservation of the invading fluid, leads to a coupled nonlinear system of partial differential equations describing the fluid pressure $p$, velocity $\bm{u}=(u^{x},u^{y})^\top$, and concentration $c$ of the invading fluid within the porous media \cite{Bear72}. Specifically, the model is formulated as follows:
\begin{equation}\label{mod:PDE}
    \left\{
		\begin{aligned}
			&\phi(\bm{x}) \p_{t}c  + \nabla \cdot ( \bm{u}c - \bm{D} \nabla c )  = \tilde{c}q, \quad  &&\text{in} ~ \Omega \times (0,T],\\
			& \bm{u} = - \f{k(\bm{x})}{\mu(c)} \nabla p, \quad  &&\text{in} ~ \Omega \times [0,T],\\
			& \nabla \cdot \bm{u} = q, \quad  &&\text{in} ~ \Omega \times [0,T],
		\end{aligned}
        \right.
\end{equation} 
where $\bar{\Omega} := [x_{L},x_{R}]\times [y_{L},y_{R}]$ with its boundary denoted by $\partial \Omega$, $\Omega=\bar{\Omega}\setminus \partial \Omega$. 
Model \eqref{mod:PDE} is enclosed with periodic or no-flow/homogeneous Neumann boundary conditions
\begin{equation}\label{mod:bc}
         \bm{u} \cdot \bm{n} = 0, \quad ( \bm{u}c - \bm{D} \nabla c ) \cdot \bm{n} = 0,\quad  \text{on} ~ \partial \Omega \times [0,T],
    \end{equation}
and the initial condition is given as 
    \begin{equation}\label{mod:initial}
        c(\bm{x},0) = c^{o}(\bm{x}),\quad \text{in} ~ \bar{\Omega}.
    \end{equation}     
The specific physical parameters and their meanings are as follows \cite{Russell84,Douglas83-1,Russell85,WH00,WH08}: 
    \begin{itemize}
        \item $\phi(\bm{x})$, the porosity of the porous media; 
        
        \item $k(\bm{x})$, the permeability coefficient of the porous media;
        
        \item $\mu(c)$, the concentration-dependent viscosity, which can be experimentally determined. A commonly adopted relationship for this parameter is the quarter-power rule:
    \begin{equation}\label{mod:mu}
        \mu(c) = \mu_0 \left[\mal M^{1/4} c + (1 - c) \right]^{-4}.
    \end{equation}
    Here the mobility ratio $\mal M := \mu_0 / \mu_s$, where $\mu_s$ denotes the viscosity of either the invading fluid/solvent in petroleum recovery processes, or the solute/solvent in subsurface contaminant transport scenarios, and $\mu_0$ represents the viscosity of the resident fluid (typically crude oil in petroleum reservoir simulations);
    
    \item $q$,  the external flow rates, commonly a linear combination of production $q_{P}$ ($ < 0 $) and injection $q_{I}$ ($ > 0 $), i.e., $q = q_{I} + q_{P}$, and $\tilde{c} = c_{I}$ is specified at injection well and $\tilde{c}=c$ at production well. Compatibility requires that 
    $
		\int_{\Omega} q(\bm{x},t) {\rm d} \bm{x} = 0$ for all $t\in[0,T]$;
    
    \item $\bm{D}$, the well-known Bear--Scheidegger diffusion-dispersion tensor which has the form
     \begin{equation}\label{mod:diffu}
         \bm{D}(\bm{x},\bm{u}) = \phi(\bm{x}) \left( \alpha_{m} \bm{I} + \alpha_{l}|\bm{u}| \bm{E}(\bm{u}) +  \alpha_{t} |\bm{u}| \bm{E}^{\perp}(\bm{u}) \right),
     \end{equation}
     where $\bm{E}(\bm{u}) := \frac{\bm{u}\bm{u}^\top}{|\bm{u}|^2}$ is a $2 \times 2$ tensor representing the orthogonal projection along the velocity vector, $|\bm{u}|^{2} = \bm{u} \cdot \bm{u}$ is the Euclidean norm of $\bm{u}$, and $\bm{E}^{\perp} = \bm{I} - \bm{E}$ is the orthogonal complement of $\bm{E}(\bm{u})$. The parameters $\alpha_{m}$, $\alpha_{l}$ and $\alpha_{t}$ are the coefficients of molecular diffusion, longitudinal and transverse dispersion, respectively. For clarity of presentation, the argument $\bm{x}$ is usually omitted below, i.e., we simply denote $\bm{D}(\bm{u})$ to represent $\bm{D}(\bm{x},\bm{u})$.
    \end{itemize}
    
   It is well-known that the analytical solutions for the coupled nonlinear systems are usually not available. Therefore, the challenge faced by the computational mathematics community is to design effective and efficient numerical methods to provide highly accurate numerical solutions. Over the past few decades, a variety of numerical methods and theoretical analysis of the miscible displacement model have been extensively studied, including the Galerkin finite element (FE) method \cite{Ewing80,Russell85,SunWu19}, mixed finite element (MFE) method \cite{Douglas83-1,Douglas83-2,Russell84,LBY13,LBY14,Rui13,CYP16}, finite difference (FD) method \cite{Rui20,LA22,LA24}, discontinuous Galerkin (DG) method \cite{Beatrice11,LR15,CXY19,FGKY22,KXY24} and so on. Notably, a critical observation in the modeling of incompressible miscible displacement problem \eqref{mod:PDE} is that the primary physical quantity of interest is the concentration field $c$, while pressure $p$ only indirectly influences the dynamics through the velocity field $\bm{u}$.  
   Therefore, numerical schemes that directly approximate the velocity $\bm{u}$ rather than relying on the computations of the pressure $p$ first and the velocity $\bm{u}$ second are theoretically and computationally more suitable for such problems.  
    
    Fortunately, the classical MFE method offers a significant advantage in accurate approximations of both pressure $p$ and velocity $\bm{u}$ simultaneously, and it has attracted considerable attention over the past few decades. For example, Douglas et al. \cite{Douglas83-1,Douglas83-2} proposed a Galerkin-MFE hybrid scheme that approximates the concentration with Galerkin method, while approximates the pressure and velocity with MFE method. This approach 
    effectively reduces the numerical difficulties arising from the differentiation of pressure $p$ multiplied by a rough coefficient to calculate velocity $\bm{u}$ \cite{Russell83}. Building upon this foundation, plenty of numerical schemes that combines the MFE method have been developed, such as the MMOC-Galerkin-MFE method \cite{Russell84,LBY14} and ELLAM-MFE method \cite{WH00,WH08}. However, the classical MFE method still requires solving a saddle-point problem, with requirement of special choices of finite element space pair, which may cause serious computational cost for large-scale modeling and simulations. 
    
    The block-centered finite difference (BCFD) method, alternatively referred to as the cell-centered finite difference method, can also be viewed as an MFE method with special numerical quadrature \cite{Arbogast97}. It provides second-order accurate approximations for both the primal variable and its associated flux simultaneously \cite{Weiser88}. Significantly, different from the classical MFE method \cite{Raviart77}, the BCFD method inherently ensures local mass conservation and yields a symmetric positive definite linear system. These advantages have established BCFD method as an efficient and widely adopted numerical approach for solving various partial differential equations.  For instance, Darcy-Forchheimer flow \cite{forch12,forch18}, miscible displacement problems \cite{Rui20,LA22,LA24}, wormhole propagation \cite{worm18}, Keller-Segel Chemotaxis system \cite{Xu25}, two-phase incompressible flow \cite{Kou22} and compressible gas flow \cite{Kou23}.
    In particular, the BCFD method combined with the backward Euler temporal discretization was developed for incompressible Darcy-Forchheimer miscible displacement problems in \cite{Rui20}, and subsequently, the authors \cite{LA22} proposed a MMOC-BCFD algorithm for compressible models through integration with the modified method of characteristics (MMOC) method. Recently, they also developed a two-grid MMOC-BCFD algorithm for slightly compressible model \cite{LA24}. Both methods were proved to achieve first-order temporal accuracy and second-order spatial accuracy simultaneously for pressure, velocity, and concentration on nonuniform grids. However, all the mentioned BCFD methods \cite{Arbogast97,Weiser88,forch12,forch18,worm18,Xu22,Xu25,Rui20,LA22,LA24} have only second-order accuracy in space! 
   Very recently, there have been some attempts at the construction of fourth-order compact BCFD (abbreviated as CBCFD) methods. Shi et al. \cite{Shi21} proposed a CBCFD method for linear elliptic and parabolic problems, and proved that fourth-order spatial convergence can be achieved for both the primary scalar variable and its flux simultaneously under periodic boundary conditions. Subsequently, such kinds of methods have also been developed for nonlinear contaminant transport equations with adsorption \cite{Shi22}  
   and semilinear Sobolev equation \cite{Wang24}. As mentioned in \cite{KXY24}, high-order numerical methods are preferable to reduce numerical artifacts and mesh dependence in simulating the viscous fingering phenomenon, which is sensitive to mesh orientation and numerical discretization. Therefore, it is of great significance to design high-order accurate BCFD schemes for the incompressible miscible displacement model.  However, the construction and analysis of a high-order accurate BCFD algorithm for model \eqref{mod:PDE} have not been reported. 

  Another key feature of the miscible displacement model is that the velocity/pressure usually evolves less rapidly in time than the concentration in practice \cite{Douglas83-2}.  Therefore, it is reasonable to use a relatively larger time step for approximations of the velocity/pressure than for the concentration. For the first time, Douglas et al. \cite{Douglas83-2} proposed the multi-time-step strategy with small time steps for concentration and large time steps for velocity/pressure, combined the backward Euler time discretization and MFE approximation for the miscible displacement in porous media. Optimal-order error estimates were obtained under a time-step condition
    \begin{equation}\label{cond:1}
        \De t_{c} = o(h),
    \end{equation}
    where $\De t_{c}$ represents the concentration time step and $h$ is the spatial mesh size. Subsequently, the multi-time-step algorithm was also employed combined with the MMOC-Galerkin-Galerkin method \cite{Russell85}, MMOC-Galerkin-MFE method \cite{Russell84}, ELLAM-MFE method \cite{WH00}. In order to obtain optimal-order error estimates, the restriction condition \eqref{cond:1} is also required in \cite{Douglas83-2,Russell85,Russell84,Rui13}, while, Wang \cite{WH08} weakened this condition to $\De t_{c} = \mal{O}(h)$. However, only first-order temporal accuracy was achieved for the concentration in \cite{Douglas83-2,Russell85,Russell84,WH00,Rui13,WH08}.  This inspires us to consider a multi-time-step high-order discretization method for the concentration, and to reduce the requirement of the time-step restriction \eqref{cond:1}. To the best of our knowledge, this appears to be the first paper on the multi-time-step CBCFD method with second-order temporal accuracy and fourth-order spatial accuracy for incompressible miscible displacement problem \eqref{mod:PDE} in porous media. In summary, our new scheme enjoys the following remarkable advantages:
    \begin{itemize}
        \item[-] Mass conservation: The proposed scheme can ensure the mass conservation law, thereby can accurately enhance the physical realism of the simulations.

        \item[-] High efficiency: The decoupling of concentration and velocity/pressure can greatly improve the efficiency of numerical simulation; and the multi-time-step strategy and high-order method can further reduce the computational cost.

        \item[-] Optimal-order convergence: A significant advantage of the scheme is its ability to achieve second-order temporal and fourth-order spatial approximations for the concentration $c$, velocity $\bm{u}$ and pressure $p$, eliminating the order reduction in accuracy caused by the differentiation of pressure $p$ when approximating the velocity in the traditional direct finite difference methods. Rigorous theoretical analysis including the error estimates and unique solvability of the developed scheme are carried out.

        \item[-] 'Real' simulations: The method can accurately simulate the standard five-spot well pattern of oil-water system in porous media. In particular, the viscous fingering phenomenon is observed. 
    \end{itemize}

The outline of the paper is organized as follows. In Section \ref{sec:scm}, the decoupled, linear, multi-time-step high-order compact scheme is developed, and we show the discrete mass conservation of the proposed scheme. In Section \ref{sec:analysis}, a rigorous optimal-order error analysis is discussed for all variables, as well as the existence and uniqueness of numerical solutions.
In Section \ref{sec:nx}, ample numerical experiments are carried out to show the accuracy, high efficiency and mass conservation. In addition, the reliability of the proposed scheme in 'real' simulations is also tested. Concluding remarks are given in Section \ref{sec:conclusion}. Throughout the paper, we use $K$ to denote a generic positive constant, which may have different values at different occurrences.
	
\section{Mass-conserving multi-time-step numerical scheme}\label{sec:scm}
In this section, we shall develop a novel linear, mass-conserving, and efficient multi-time-step high-order finite difference scheme for model \eqref{mod:PDE} under periodic boundary conditions. 

To avoid the inversion of the diffusion coefficient $\bm{D}(\bm{u})$, we adopt the idea of expanded MFE method to introduce two auxiliary variables $\bm{v}=(v^{x}, v^{y})^\top := - \nabla c$ and $\bm{w} =(w^{x}, w^{y})^\top:= \bm{u}c + \bm{D}(\bm{u})\bm{v} $. Let $a(c) := \mu(c)/k(\bm{x})$. Besides, for $\bm{D}$ defined by \eqref{mod:diffu}, we write $\bm{D}(\bm{u})=(\bm{D}^{x}(\bm{u}), \bm{D}^{y}(\bm{u}))^\top$ with $\bm{D}^{x} := (D_{11}(\bm{u}), D_{12}(\bm{u}) )^\top$ and $\bm{D}^{y} := ( D_{21}(\bm{u}), D_{22}(\bm{u}) )^\top$, where $D_{ij}(\bm{u})$ is the element of $\bm{D}(\bm{u})$. Then, the nonlinear system \eqref{mod:PDE} can be rewritten as
	\begin{equation}\label{mod:2D}
		\left\{
		\begin{aligned}
			& \phi \p_{t}c + \p_{x}w^{x} + \p_{y} w^{y}- q_{P}c  = c_{I}q_{I}, &\qquad \text{in} ~ \Omega \times (0,T],\\
            & v^{x} + \p_{x} c =0,~ v^{y} + \p_{y} c =0, &  \qquad \text{in} ~ \Omega \times (0,T], \\
			& w^{x} - u^{x}c - \bm{D}^{x}(\bm{u}) \cdot \bm{v}=0,~ w^{y} - u^{y}c -\bm{D}^{y}(\bm{u}) \cdot \bm{v}=0, & \qquad \text{in} ~ \Omega \times (0,T],\\
			& a(c) u^{x} + \p_{x} p = 0, ~ a(c) u^{y} + \p_{y} p = 0, & \qquad \text{in} ~ \Omega \times [0,T],\\
			& \p_{x} u^{x} + \p_{y} u^{y} = q, & \qquad \text{in} ~ \Omega \times [0,T].
		\end{aligned}
		\right.
	\end{equation}
Furthermore, we suppose that the following assumptions hold:
\begin{itemize}
    \item[(A1)] The exact solutions satisfy
    $$
      \begin{aligned}
          & c \in W^{1,\infty}([0,T] \times\Omega) \cap L^{\infty}(0,T;H^{6}(\Omega)) \cap H^{3}(0,T;L^{\infty}(\Omega)) \cap H^{2}(0,T;W^{1,\infty}(\Omega)),\\
	     & \bm{u} \in  H^{2}(0,T;L^{\infty}(\Omega)) \cap L^{\infty}(0,T;H^{5}(\Omega)),\quad  
           p \in  L^{\infty}(0,T;H^{5}(\Omega)).
      \end{aligned}
    $$
    Besides, there exists a positive constant $K_{1}$ such that
    $$
       \|c\|_{L^{\infty}([0,T]\times \Omega)} + \|\p_{t} c\|_{L^{\infty}([0,T]\times \Omega)} + \|\bm{u}\|_{L^{\infty}([0,T]\times \Omega)} \leq K_{1}.
    $$
    \item [(A2)] There exist positive constants $\phi_{*}$, $\phi^{*}$, $\mu_{*}$, $\mu^{*}$, $k_{*}$, $k^{*}$ and $K_{2}$, such that
	$$
    0 < \phi_{*} \leq \phi(\bm{x}) \leq \phi^{*}, ~
	0 < \mu_{*} \leq \mu(c) \leq \mu^{*}, ~
	0 < k_{*} \leq k(\bm{x}) \leq k^{*},
    $$
    $$
	\max\{\|\mu'\|_{L^{\infty}(\Omega)}, \|q_{P} \|_{L^{\infty}([0,T]\times \Omega)} \} \leq K_{2}.
	$$
\end{itemize}	    

	
	\subsection{Notations}
In the following, we present some basic notations that used in the paper. 
First, we introduce two different temporal girds for the velocity and concentration fields, respectively. Let $N_{p}$ and $N_{c}$ be two positive integers such that $N_{c}=Q N_{p}$ for some positive integer $Q~ (\ge 1)$. The time interval $[0,T]$ is then partitioned for the pressure by $t_{p}^{m} = m \De t_{p}$ with the pressure time step $\De t_{p}=T/N_{p}$, and for the concentration by $t_{c}^{n}=n\De t_{c}$ with the concentration time step $ \De t_{c} =\De t_{p}/Q$. The temporal grids $t_{c}^{n}$ and $t_{p}^{m}$ are illustrated in Figure \ref{fig:grid:time}.
  For temporal grid functions 
  $\left\lbrace \theta_{c}^{n}=\theta(t_{c}^{n})\right\rbrace_{n \ge 0} $, define
	$$
    d_t\theta_{c}^{n+1/2} :=\df{\theta_{c}^{n+1}-\theta_{c}^{n}}{\De t_{c}}, \quad \ovl{\theta}_{c}^{n+1/2} :=\df{\theta_{c}^{n+1}+\theta_c^{n}}{2}.
	$$
    
	Second, let $ N_x $ and $N_y$ be the number of grids along the $x$ and $y$ coordinates, respectively. We introduce staggered spatial grids $\{ \Pi_{x}, \Pi_{x}^{*}\}$ and $\{ \Pi_{y}, \Pi_{y}^{*}\}$ along $x$ and $y$ directions, respectively, by
$$
  \begin{aligned}
    \Pi_{x} : x_{i+1/2} = x_{L}+ih^{x}, \ i = 0,\cdots, N_{x}, \quad 
        \Pi_{x}^{*}: x_{i} = \f{ x_{i+1/2} +x_{i-1/2} }{2}, \ i = 1,\cdots,N_{x},\\
    \Pi_{y} : y_{j+1/2} = y_{L}+jh^{x}, \ j = 0,\cdots, N_{y}, \quad 
         \Pi_{y}^{*}: y_{j} = \f{ y_{j+1/2} + y_{j-1/2} }{2}, \ j = 1,\cdots,N_{y},
   \end{aligned}
$$
with spatial mesh sizes $h^{x} = (x_{R}-x_{L})/N_{x}$ and $h^{y} = (y_{R}-y_{L})/N_{y}$. Let $h:= \max\{ h^{x}, h^{y}\}$.
The three sets of staggered grids $\Pi_{x}^{*} \times \Pi_{y}^{*}$, $\Pi_{x}\times \Pi_{y}^{*}$ and $\Pi_{x}^{*} \times \Pi_{y}$ are shown in Figure \ref{fig:grid:space}, where the concentration $c$ and pressure $p$ are both approximated over $\Pi_{x}^{*} \times \Pi_{y}^{*}$, while the velocity component $u^{x}$ and flux $v^{x}$ in the $x$-direction, and $u^{y}$ and $v^{y}$ in the $y$-direction, are solved on grids $\Pi_{x} \times \Pi_{y}^{*}$ and $\Pi_{x}^{*} \times \Pi_{y}$, respectively.  
    \begin{figure}[!htbp]
    \vspace{-5 mm}
    \centering
    	\subfigure[The distributions of $t_{p}^{m}$ and $t_{c}^{n}$ with $Q=5$]
	{ 
		\includegraphics[width=0.45\textwidth]{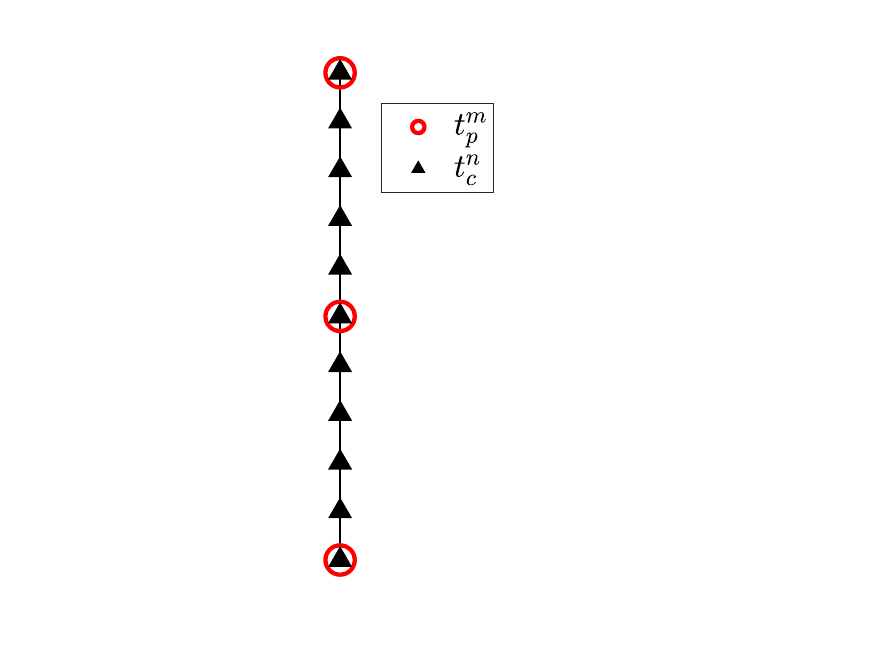}
          \label{fig:grid:time}
		}
        \subfigure[The distributions of staggered grids]
    	{ 
		\includegraphics[width=0.45\textwidth]{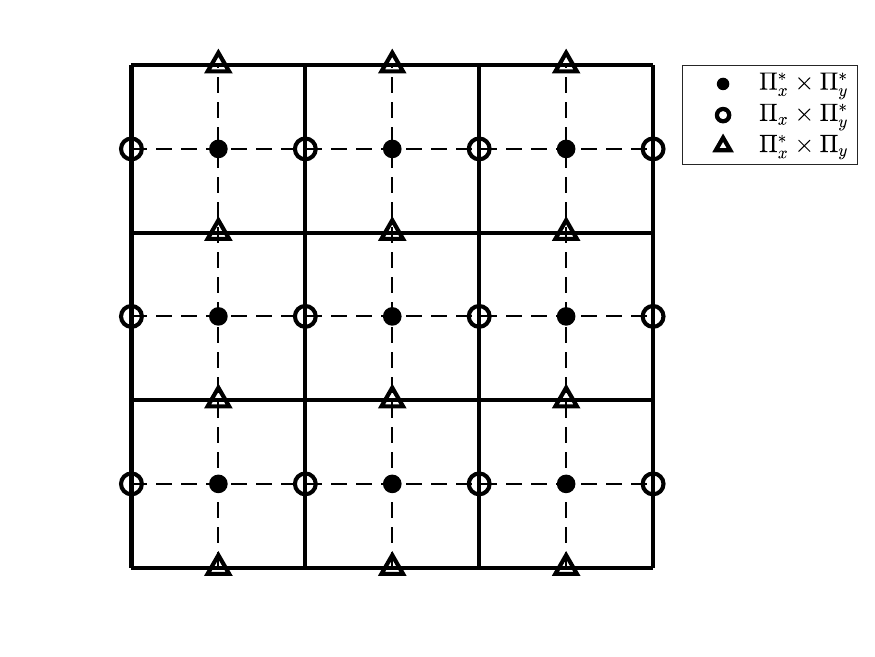}
          \label{fig:grid:space}
		}  
    \caption{The schematic illustration of temporal and spatial grids.}
   \end{figure}

Besides,  define the spaces of grid functions with periodic boundary conditions
	$$
	\begin{aligned}
		&\mal{P} := \left\lbrace g \mid g=\{g_{i,j}\}, (x_{i},y_{j})\in \Pi_{x}^{*}\times \Pi_{y}^{*},~ 
        ~ \text{and}~  g ~ \text{is periodic} \right\rbrace ,\\
		&\mal{U} := \left\lbrace g  \mid g = \{g_{i+1/2,j}\},  (x_{i+1/2},y_{j})\in \Pi_{x}\times \Pi_{y}^{*},~ 
        ~ \text{and}~  g ~ \text{is periodic}\right\rbrace,\\
		&\mal{V} := \left\lbrace g  \mid  g= \{g_{i,j+1/2}\}, (x_{i},y_{j+1/2})\in \Pi_{x}^{*}\times \Pi_{y},~
        ~ \text{and}~  g ~ \text{is periodic} \right\rbrace.
	\end{aligned}
	$$
Furthermore, for $\bm{\xi}=(\xi^x,\xi^y) \in \mal{U}\times \mal{V}$,  $ \bm{\nu}=(\nu^x, \nu^y)\in \mal{U} \times \mal{V}$ and $\omega, \sigma \in \mal{P}$, we introduce the following discrete $L^2$ inner products
	\begin{equation*}
		\begin{aligned}
			&   (\xi^x,\nu^x)_{x}:= \sum_{i=1}^{N_{x}}\sum_{j=1}^{N_{y}} h^{x} h^{y}\, \xi^x_{i+1/2,j}\, \nu^x_{i+1/2,j},
			&& (\xi^y,\nu^y)_{y}:=\sum_{i=1}^{N_{x}}\sum_{j=1}^{N_{y}} h^{x} h^{y}\, \xi^y_{i,j+1/2}\, \nu^y_{i,j+1/2},
			\\
			&     (\bm{\xi},\bm{\nu})_{\rm T}:=(\xi^x,\nu^x)_{x}+(\xi^y,\nu^y)_{y},      
            && (\omega,\sigma)_{\rm M}:=\sum_{i=1}^{N_{x}}\sum_{j=1}^{N_{y}}h^{x} h^{y}\, \omega_{i,j}\, \sigma_{i,j},
		\end{aligned}
	\end{equation*}
	and  corresponding discrete norms $\| \cdot \|_{\ell} = \sqrt{(\cdot,\cdot)_{\ell}}$ for $\ell = x, y, \rm{T}, \rm{\rm{M}}$. 
Moreover, define the following spatial difference operators
	$$
	[\de_x\omega]_{s,j}:=\df{\omega_{s+1/2,j}-\omega_{s-1/2,j}}{h^{x}}, \quad [\de_x^2\omega]_{s,j}:=\df{\omega_{s+1,j}-2\omega_{s,j}+\omega_{s-1,j}}{(h^{x})^2},\quad  s= i,i+1/2.
	$$
Similarly, the difference operators $\de_y$ and $\de_y^{2}$ can be defined. In addition, we define $|\omega|_{1}^{2}:= \|\de_{x} \omega\|_{x}^{2} + \|\de_{y} \omega\|_{y}^{2}$ and $\|\omega\|_{\infty}:=\max_{i,j} |\omega_{i,j}|$. We also introduce the following fourth-order difference operators with three-point stencils
	\begin{equation}\label{op:L}
		\mal{L}_{x} :=  \mal{I} + \f{(h^{x})^{2}}{24} \delta_x^2,\quad
		\mal{L}_{y} :=  \mal{I} + \f{(h^{y})^{2}}{24} \delta_y^2,   
	\end{equation}
	where $\mal{I}$ is the identity operator.
	For simplicity, denote $\mal{L} := \mal{L}_{x} \mal{L}_{y}$. 

    \begin{figure}[!htbp]
    \vspace{-5pt}
    \centering
    	\subfigure[Nodal distributions of $\mal{T}_{x}$ and $\mal{T}_{x}^{*}$ ]
	{ 
		\includegraphics[width=0.45\textwidth]{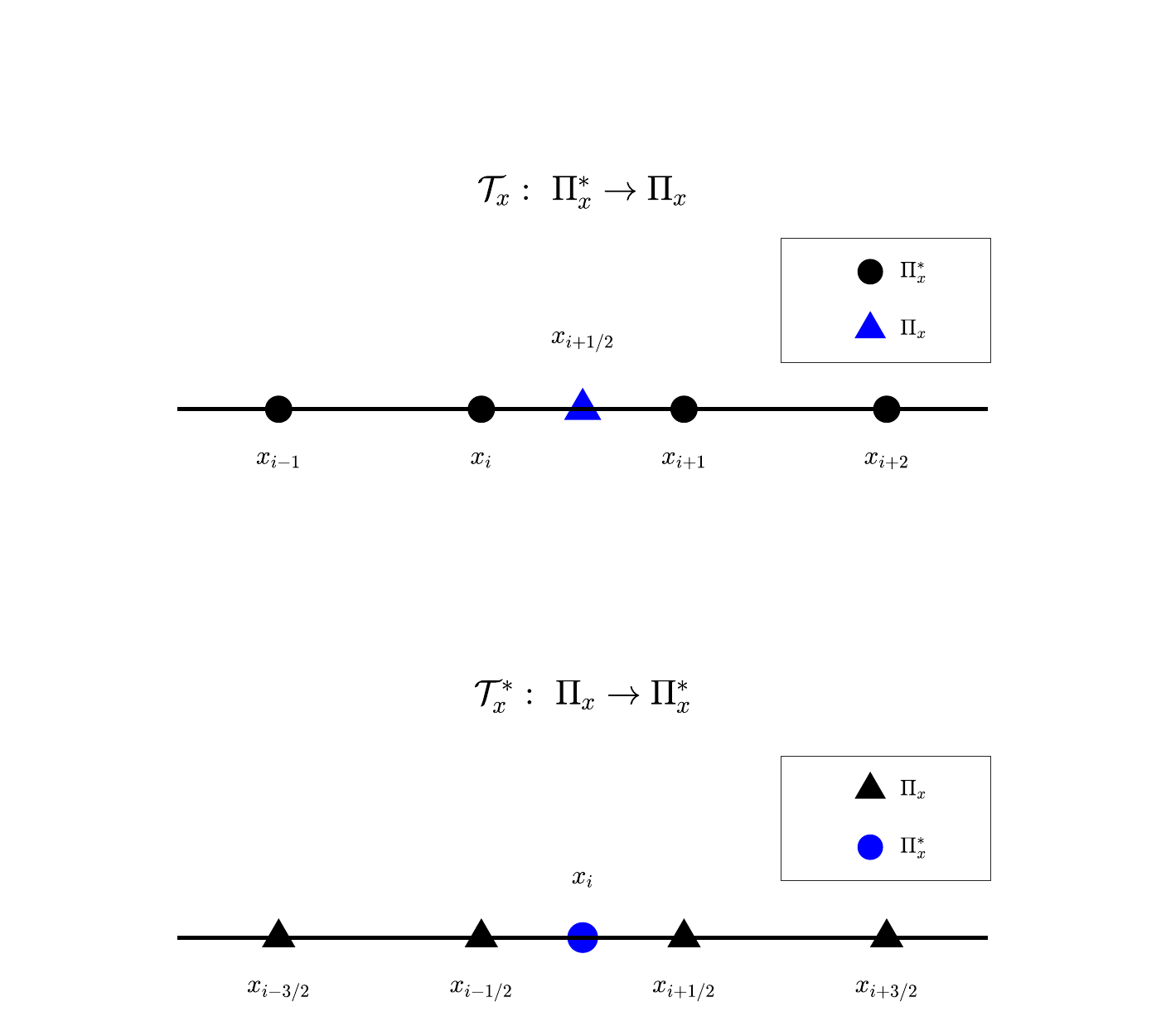}
          \label{fig:Tx}
		}
        \subfigure[Nodal distributions of $\mal{H}_{x}$]
    	{ 
		\includegraphics[width=0.45\textwidth]{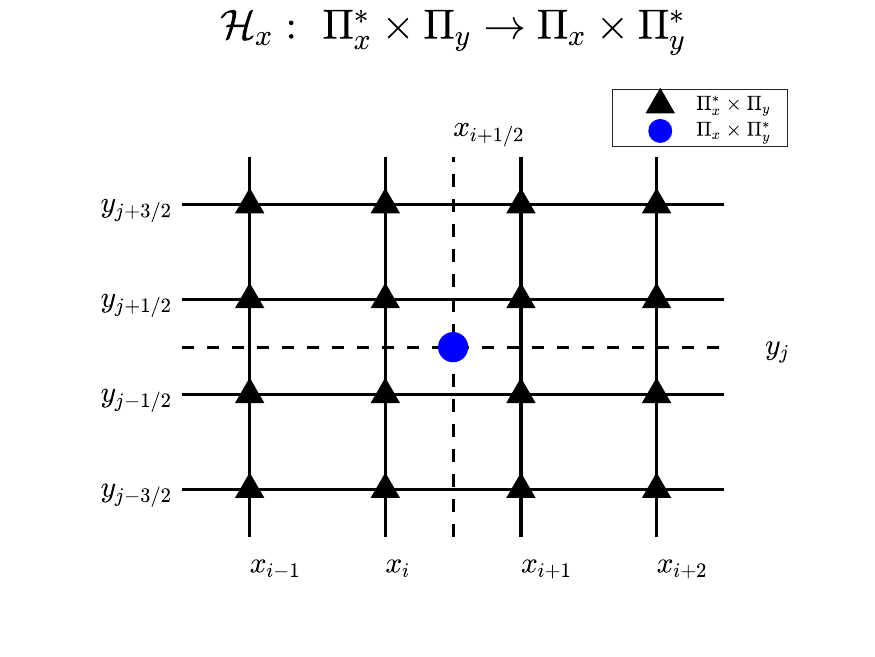}
          \label{fig:Hx}
		}  
    \caption{The schematic illustrations of interpolation operators $\mal{T}_{x}$, $\mal{T}_{x}^{*}$ and $\mal{H}_{x}$.}
   \end{figure}
Finally, we introduce some local interpolation operators. First, define the local cubic Lagrange interpolation operators $\mal{T}_{x}$ from $\Pi_{x}^{*}$ to $\Pi_{x}$ and $\mal{T}_{x}^{*}$ from $\Pi_{x}$ to $\Pi_{x}^{*}$ by
\begin{equation}\label{op:t}
    \begin{aligned}
		[ \mal{T}_{x} \omega ]_{i+1/2} &:= \f{-\omega_{i-1} + 9\omega_{i} + 9\omega_{i+1} - \omega_{i+2} }{16},\\
		[ \mal{T}_{x}^{*} \omega ]_{i} &:= \f{-\omega_{i-3/2} + 9\omega_{i-1/2} + 9\omega_{i+1/2} - \omega_{i+3/2} }{16},
        \end{aligned}
\end{equation}
see Figure \ref{fig:Tx} for the nodal distributions of $\mal{T}_{x}$ and $\mal{T}_{x}^{*}$. Similarly, the local cubic Lagrange interpolation operators $\mal{T}_{y}$ from $\Pi_{y}^{*}$ to $\Pi_{y}$ and $\mal{T}_{y}^{*}$ from $\Pi_{y}$ to $\Pi_{y}^{*}$ can be defined.  Second, with the introduced directional interpolation operators, we can define the local bicubic Lagrange interpolation operators $\mal{H}_{x}$ from $\Pi_{x}^{*} \times \Pi_{y}$  to $\Pi_{x} \times \Pi_{y}^{*}$ 
by
    \begin{equation}\label{op:h}
            \mal{H}_{x} := \mal{T}_{y}^{*}  \mal{T}_{x}  =   \mal{T}_{x} \mal{T}_{y}^{*},
	\end{equation}
see Figure \ref{fig:Hx} for the nodal distributions of $\mal{H}_{x}$. Similarly, one can define $\mal{H}_{y} := \mal{T}_{x}^{*}  \mal{T}_{y}  =   \mal{T}_{y} \mal{T}_{x}^{*}$ from $\Pi_{x} \times \Pi_{y}^{*}$  to $\Pi_{x}^{*} \times \Pi_{y}$.
\subsection{The MC-MTS-CBCFD scheme and its mass conservation}
In this subsection, we construct the linear, mass-conserving, multi-time-step CBCFD (MC-MTS-CBCFD) scheme to approximate the velocity $\bm{u}$/pressure $p$ and the concentration $c$ to a same fourth-order spatial accuracy, in which the Crank-Nicolson method and the multi-time-step method are used in time discretization, and the CBCFD method is considered in space discretization on staggered spatial grids.

Let us denote the difference approximations of $\{ p(t_{p}^{m}), u^{x}(t_{p}^{m}), u^{y}(t_{p}^{m})\}$ at time $t=t_{p}^{m}$ and $\{ c(t_{c}^{n}), v^{x}(t_{c}^{n}),  v^{y}(t_{c}^{n})\}$ at time $t=t_{c}^{n}$ by $\{ P^{m}, U^{x,m}, U^{y,m}\} \in  \mal{P} \times \mal{U} \times\mal{V} $ and $\{ C^{n}, V^{x,n}, V^{y,n}\} \in  \mal{P} \times \mal{U} \times\mal{V} $, respectively. Note that the main difficulty in the construction of the MC-MTS-CBCFD scheme lies in the discretization of the nonlinear convective term $\nabla \cdot ( \bm{u}c)$ and tensor-form nonlinear diffusion coefficient $\bm{D}(\bm{u})$, as concentration $c$ and velocity components $u^{x}$ and $u^{y}$ in two directions are approximated on staggered spatial grids $\Pi_{x}^{*} \times \Pi_{y}^{*}$, $\Pi_{x} \times \Pi_{y}^{*}$ and $\Pi_{x}^{*} \times \Pi_{y}$, respectively.
Thus, we shall use the local cubic Lagrange interpolation operator $\mal{T}_{x}$ (resp. $\mal{T}_{y}$) defined by \eqref{op:t} to make $c$  and $u^{x}$ (resp. $u^{y}$) match on the same grid points $\Pi_{x} \times \Pi_{y}^{*}$ (resp. $\Pi_{x}^{*} \times \Pi_{y}$). Besides, we also utilize the local bicubic Lagrange interpolation operators $\mal{H}_{x}$ (resp. $\mal{H}_{y}$) defined by \eqref{op:h} to make the velocity components $u^{y}$ (resp. $u^{x}$) and $u^{x}$ (resp. $u^{y}$) match on the same grid points $\Pi_{x} \times \Pi_{y}^{*}$ (resp. $\Pi_{x}^{*} \times \Pi_{y}$). Then, the fully discrete fourth-order MC-MTS-CBCFD scheme  for problem \eqref{mod:2D} is proposed as follows:

\textbf{Step 1.} For $m \ge 0$, solve $\{P^{m},U^{x,m},U^{y,m}\} \in  \mal{P} \times \mal{U} \times\mal{V} $  at pressure time $t=t_{p}^{m}$ by  
    \begin{align}
         & \big[\mal{L}_{y} \de_{x} U^{x} + \mal{L}_{x} \de_{y} U^{y} \big]_{i,j}^{m} = \big[ \mal{L} q\big]_{i,j}^{m}, 
               & \text{on}~ \Pi_{x}^{*}\times \Pi_{y}^{*}, \label{scm:pu:1}\\
	&\big[ \mal{L}_{x}[ a(\mal{T}_{x}C_{p}) U^{x}  ] + \de_{x} P  \big]_{i+1/2,j}^{m} = 0, 
               & \text{on}~ \Pi_{x}\times \Pi_{y}^{*}, \label{scm:pu:2}\\
	&\big[ \mal{L}_{y}[ a(\mal{T}_{y}C_{p}) U^{y}  ] + \de_{y} P  \big]_{i,j+1/2}^{m} = 0, 
               & \text{on}~ \Pi_{x}^{*}\times \Pi_{y}, \label{scm:pu:3}
    \end{align}
hereafter, the subscript $p$ in $C_p^m$ denotes concentration at pressure time $t_{p}^{m}$, taking value at concentration time $t_{c}^{n}$, i.e., $C_p^m=C^n$ for $n=Qm$.

\textbf{Step 2.} Solve the intermediate solutions $\{C_{p}^{1,*},V_{p}^{x,1,*},V_{p}^{y,1,*},W_{p}^{x,1,*},W_{p}^{y,1,*} \} \in  \mal{P} \times  \mal{U} \times\mal{V}\times  \mal{U} \times\mal{V} $ at pressure time $t=t_{p}^{1}=t_{c}^{Q}$ via the Crank-Nicolson scheme
      \begin{align}
            &\Big[\mal{L}[\phi D_{t} C_{p}] + \mal{L}_{y} \de_{x} \ovl{W}_{p}^x + \mal{L}_{x} \de_{y} \ovl{W}_{p}^y - \mal{L} [q_{P} \ovl{C}_{p}]\Big]_{i,j}^{1/2,*}  = \mal{L} [c_{I}q_{I}]_{i,j}^{1/2},  & \text{on}~ \Pi_{x}^{*}\times \Pi_{y}^{*}, \label{scm:cu:p:1} \\
            & \Big[\mal{L}_{x} V_{p}^{x} + \de_{x} C_{p} \Big]_{i+1/2,j}^{1,*} = 0,
            & \text{on}~ \Pi_{x} \times \Pi_{y}^{*}, \label{scm:cu:p:2} \\
            & \Big[\mal{L}_{y} V_{p}^{y} + \de_{y} C_{p} \Big]_{i,j+1/2}^{1,*} = 0,
            & \text{on}~ \Pi_{x}^{*}\times \Pi_{y}, \label{scm:cu:p:3} \\
            & \Big[ W_{p}^{x} - U_{\#}^{x} \mal{T}_{x} C_{p} -  \bm{D}^{x}( U_{\#}^{x},\mal{H}_{x} U_{\#}^{y} ) \cdot ( V_{p}^{x}, \mal{H}_{x} V_{p}^{y} )  \Big]_{i+1/2,j}^{1,*} = 0,
            & \text{on}~ \Pi_{x}\times \Pi_{y}^{*}, \label{scm:cu:p:4} \\
            & \Big[ W_{p}^{y} - U_{\#}^{y} \mal{T}_{y} C_{p} -  \bm{D}^{y}( \mal{H}_{y}U_{\#}^{x}, U_{\#}^{y} ) \cdot ( \mal{H}_{y}V_{p}^{x},  V_{p}^{y} )  \Big]_{i,j+1/2}^{1,*} = 0,
            & \text{on}~ \Pi_{x}^{*}\times \Pi_{y}, \label{scm:cu:p:5} 
    \end{align}
    where $\bm{U}_{\#}^{1,*}:=\bm{U}^{0}$, and
	$$
	D_{t} C_{p}^{1/2,*}:= \f{ C_{p}^{1,*} - C^{0} }{\De t_{p}}, \quad 
	\ovl{C}_{p}^{1/2,*}:=\f{ C_{p}^{1,*} + C^{0} }{2},  \quad 
	\ovl{\bm W}_{p}^{1/2,*}:=\f{\bm W_{p}^{1,*} + \bm W^{0} }{2}.
	$$
    And then, solve the intermediate solutions $\{P^{1,*},U^{x,1,*},U^{y,1,*}\} \in  \mal{P} \times \mal{U} \times\mal{V} $ at pressure time $t=t_{p}^{1}$ via  
    \begin{align}
	& \Big[\mal{L}_{y} \de_{x} U^{x} + \mal{L}_{x} \de_{y} U^{y} \Big]_{i,j}^{1,*} = \big[ \mal{L} q \big]_{i,j}^{1}, 
            & \text{on}~ \Pi_{x}^{*}\times \Pi_{y}^{*}, \label{scm:cu:p:6} \\
	&\Big[ \mal{L}_{x}[ a(\mal{T}_{x}C_{p}) U^{x}  ] + \de_{x} P  \Big]_{i+1/2,j}^{1,*} = 0, 
            & \text{on}~ \Pi_{x}\times \Pi_{y}^{*},\label{scm:cu:p:7} \\
	&\Big[ \mal{L}_{y}[ a(\mal{T}_{y}C_{p}) U^{y}  ] + \de_{y} P  \Big]_{i,j+1/2}^{1,*} = 0, 
            & \text{on}~ \Pi_{x}^{*}\times \Pi_{y}. \label{scm:cu:p:8} 
    \end{align}

\textbf{Step 3.} Solve $\{C^{n+1},V^{x,n+1},V^{y,n+1} \} \in  \mal{P} \times \mal{U} \times\mal{V} $  at concentration time $t=t_{c}^{n+1}$ such that $t_{p}^{m} < t_{c}^{n+1} \leq t_{p}^{m+1}$ via the Crank-Nicolson scheme
   \begin{align}
            &\Big[\mal{L}[\phi d_{t} C] + \mal{L}_{y} \de_{x} \ovl{W}^x + \mal{L}_{x} \de_{y} \ovl{W}^y - \mal{L} [q_{P} \ovl{C}]\Big]_{i,j}^{n+1/2}  = \mal{L} [c_{I}q_{I}]_{i,j}^{1/2},  & \text{on}~ \Pi_{x}^{*}\times \Pi_{y}^{*}, \label{scm:cv:1} \\
            & \Big[\mal{L}_{x} V^{x} + \de_{x} C \Big]_{i+1/2,j}^{n+1} = 0,
            & \text{on}~ \Pi_{x} \times \Pi_{y}^{*}, \label{scm:cv:2} \\
            & \Big[\mal{L}_{y} V^{y} + \de_{y} C \Big]_{i,j+1/2}^{n+1} = 0,
            & \text{on}~ \Pi_{x}^{*}\times \Pi_{y}, \label{scm:cv:3} \\
            & \Big[ W^{x} - U_{\#}^{x} \mal{T}_{x} C -  \bm{D}^{x}( U_{\#}^{x},\mal{H}_{x} U_{\#}^{y} ) \cdot ( V^{x}, \mal{H}_{x} V^{y} )  \Big]_{i+1/2,j}^{n+1} = 0,
            & \text{on}~ \Pi_{x}\times \Pi_{y}^{*}, \label{scm:cv:4} \\
            & \Big[ W^{y} - U_{\#}^{y} \mal{T}_{y} C -  \bm{D}^{y}( \mal{H}_{y}U_{\#}^{x}, U_{\#}^{y} ) \cdot ( \mal{H}_{y}V^{x},  V^{y} )  \Big]_{i,j+1/2}^{n+1} = 0,
            & \text{on}~ \Pi_{x}^{*}\times \Pi_{y}, \label{scm:cv:5} 
    \end{align}
    for $Qm \le  n \le Q(m+1)-1$ and some $m \geq 0$, where
	\begin{equation}\label{udl}
		\bm{U}_{\#}^{n+1} := 
		\left\lbrace 
		\begin{aligned}
			&\f{t_{c}^{n+1}-t_{p}^{m-1}}{t_{p}^{m}-t_{p}^{m-1}} \bm{U}^{m} - 
			\f{t_{c}^{n+1}-t_{p}^{m}}{t_{p}^{m}-t_{p}^{m-1}} \bm{U}^{m-1}, & 
			t_{p}^{m} < t_{c}^{n+1} \leq t_{p}^{m+1},\quad m \geq 1,\\
			& \f{t_{c}^{n+1}-t_{p}^{0}}{t_{p}^{1}-t_{p}^{0}} \bm{U}^{1,*} + 
			\f{t_{p}^{1}-t_{c}^{n+1}}{t_{p}^{1}-t_{p}^{0}} \bm{U}^{0},&  t_{p}^{0} \leq t_{c}^{n+1} \leq t_{p}^{1}, \quad m =0. 
		\end{aligned}
		\right.
	\end{equation}
    

Steps 1--3 are enclosed with the following initial conditions
  \begin{equation}\label{ibc}
     \begin{aligned}
         &C_{i,j}^{0} = c^{o}(x_{i},y_{j}), & \quad \text{on}~ \Pi_{x}^{*}\times \Pi_{y}^{*},\\
         &V_{i+1/2,j}^{x,0} = \p_{x} c^{o}(x_{i+1/2},y_{j}), & \quad  \text{on}~ \Pi_{x}\times \Pi_{y}^{*},\\
         &V_{i,j+1/2}^{y,0} = \p_{y} c^{o}(x_{i},y_{j+1/2}), &\quad  \text{on}~ \Pi_{x}^{*}\times \Pi_{y}. 
     \end{aligned}
  \end{equation}
In addition, we enforce
	\begin{equation}\label{comP}
		P^{m}_{1,1} = 0,\quad m \geq 0,
	\end{equation}
	in \eqref{scm:pu:1}--\eqref{scm:pu:3} and \eqref{scm:cu:p:6}--\eqref{scm:cu:p:8} to ensure the uniqueness of $P$.

   \begin{remark} In \eqref{scm:cu:p:1}--\eqref{scm:cu:p:5} of Step 2 and also \eqref{scm:cv:1}--\eqref{scm:cv:5} of Step 3 for $n=0$, the initial auxiliary vector variable $\{ W^{x,0}, W^{y,0}\} \in \mal{U} \times \mal{V}$ are required. It can be calculated via the computed initial velocity $\bm{U}^{0}$ in Step 1 and the initial concentration \eqref{ibc} that
     \begin{align}
           &W_{i+1/2,j}^{x,0} = \big[ U^{x}c +  \bm{D}^{x}( U^{x},\mal{H}_{x}U^{y}) \cdot \nabla c \big]_{i+1/2,j}^{0},  &  \text{on}~ \Pi_{x}\times \Pi_{y}^{*}, \label{scm:w:1} \\
           &W_{i,j+1/2}^{y,0} = \big[ U^{y}c + \bm{D}^{y}( \mal{H}_{y} U^{x},U^{y}) \cdot \nabla c \big]_{i,j+1/2}^{0},  &  \text{on}~ \Pi_{x}^{*}\times \Pi_{y}. \label{scm:w:2} 
    \end{align}
     In practical computations, it can also be calculated by the fourth-order numerical scheme \eqref{scm:cv:4}--\eqref{scm:cv:5}, i.e.,
    \begin{align}
            &\Big[ W^{x} - U_{\#}^{x} \mal{T}_{x} C -  \bm{D}^{x}( U_{\#}^{x},\mal{H}_{x} U_{\#}^{y} ) \cdot ( V^{x}, \mal{H}_{x} V^{y} ) \Big]_{i+1/2,j}^{0} = 0,
            & \text{on}~ \Pi_{x}\times \Pi_{y}^{*}, \label{scm:w:3} \\
			& \Big[ W^{y} - U_{\#}^{y} \mal{T}_{y} C -  \bm{D}^{y}( \mal{H}_{y}U_{\#}^{x}, U_{\#}^{y} ) \cdot ( \mal{H}_{y}V^{x},  V^{y} )  \Big]_{i,j+1/2}^{0} = 0,
            & \text{on}~ \Pi_{x}^{*}\times \Pi_{y}. \label{scm:w:4} 
    \end{align}
   \end{remark} 
\begin{remark} The auxiliary variable $\bm{W}$ in Steps 2--3 only serve as an intermediate quantity, which does not need to be solved. In fact, by substituting \eqref{scm:cv:4}--\eqref{scm:cv:5} into \eqref{scm:cv:1}, Step 3 reduces to linear systems only in terms of $C^{n+1}$ and $\bm{V}^{n+1}$ such that
    \begin{equation}\label{matrix}
         \begin{bmatrix}
       	\mal{A}_{1} &\mal{A}_{2} & \mal{A}_{3}\\
       	\de_{x}  & \mal{L}_{x}  & \bm{0}\\
       	\de_{y}  & \bm{0} &  \mal{L}_{y}
       \end{bmatrix}
       \begin{bmatrix}
       	C^{n+1} \\
       	V^{x,n+1} \\
       	V^{y,n+1} 
       \end{bmatrix}
       = F^{n+1},
    \end{equation}
       where the linear operators
       $$
         \begin{aligned}
             &\mal{A}_{1}\omega:= \f{2}{\De t_{c}}\mal{L} \big[\phi \omega\big]-  \mal{L} \big[ q_{P}^{n+1/2}\omega\big] +   \mal{L}_{y} \de_{x} \big[  U_{\#}^{x,n+1} \mal{T}_{x}\omega\big] +  \mal{L}_{x} \de_{y} \big[  U_{\#}^{y,n+1} \mal{T}_{y}\omega\big],\\
             &\mal{A}_{2}\omega:=   \mal{L}_{y} \de_{x} \big[ D_{11} ( U_{\#}^{x,n+1},\mal{H}_{x} U_{\#}^{y,n+1} )\omega\big] +   \mal{L}_{x} \de_{y} \big[D_{21} ( \mal{H}_{y}U_{\#}^{x,n+1}, U_{\#}^{y,n+1} )\mal{H}_{y}\omega\big], \\
              &\mal{A}_{3}\omega:=   \mal{L}_{y} \de_{x} \big[ D_{12} ( U_{\#}^{x,n+1},\mal{H}_{x} U_{\#}^{y,n+1} ) \mal{H}_{x} \omega \big]+  \mal{L}_{x} \de_{y} \big[ D_{22} ( \mal{H}_{y}U_{\#}^{x,n+1}, U_{\#}^{y,n+1} )\omega\big].
         \end{aligned}
       $$
\end{remark}
\begin{remark}   When only molecular diffusion is considered \cite{Rui20,LA22,LA24,YYR21,GH17,Douglas83-3}, that is, the diffusion tensor is velocity-independent. In this case, it reduces to a linear form $\bm{D}(\bm x)=\alpha_{m}\phi(\bm{x}) \bm{I}=D(\bm x)\bm{I}$. Then, equations \eqref{scm:cu:p:4}--\eqref{scm:cu:p:5} and \eqref{scm:cv:4}--\eqref{scm:cv:5} can now be written into the following simple formulations:
      \begin{align*}
            & \Big[ W_{p}^{x} - U_{\#}^{x} \mal{T}_{x} C_{p} -  D V_{p}^{x}  \Big]_{i+1/2,j}^{1,*} = 0,
            & \text{on}~ \Pi_{x}\times \Pi_{y}^{*},  \\
            & \Big[ W_{p}^{y} - U_{\#}^{y} \mal{T}_{y} C_{p} -  D V_{p}^{y}  \Big]_{i,j+1/2}^{1,*} = 0,
            & \text{on}~ \Pi_{x}^{*}\times \Pi_{y}, 
    \end{align*}
    and
    \begin{align*}
            & \Big[ W^{x} - U_{\#}^{x} \mal{T}_{x} C -  D V^{x}  \Big]_{i+1/2,j}^{n+1} = 0,
            & \text{on}~ \Pi_{x}\times \Pi_{y}^{*},  \\
            & \Big[ W^{y} - U_{\#}^{y} \mal{T}_{y} C -  D V^{y}  \Big]_{i,j+1/2}^{n+1} = 0,
            & \text{on}~ \Pi_{x}^{*}\times \Pi_{y}.
    \end{align*}
\end{remark}
\begin{remark} 	Note that the proposed MC-MTS-CBCFD scheme \eqref{scm:pu:1}--\eqref{comP} is linear and decoupled, where the velocity/pressure and concentration are solved sequentially, and every $Q$ concentration time steps, the velocity/pressure have to be updated only once. This multi-time-step idea  is quite agreement with the fact that the velocity field varies slower in time than the concentration one. Besides, to achieve global second-order temporal accuracy, a predicted velocity $\bm{U}^{1,*}$ in the first pressure time step is solved. In addition, the scheme can solve the velocity and pressure simultaneously in the same fourth-order spatial accuracy, avoiding reduced accuracy in the velocity approximation caused by taking differentiation for numerical pressure and then multiplying by an approximate nonlinear coefficient $a^{-1}(c)$ in the traditional FD methods. The detailed implementation of the MC-MTS-CBCFD scheme is referred to Algorithm \ref{alg:1}.
    \begin{algorithm}
       \caption{The implementation of the MC-MTS-CBCFD scheme }\label{alg:1}
         \begin{algorithmic}[1]       
          \State Solve $\{P^{0},\bm{U}^{0}\}$ by \eqref{scm:pu:1}--\eqref{scm:pu:3} enclosed with \eqref{comP} and input $C^0$
             \State Solve $\bm{W}^{0}$ by \eqref{scm:w:1}--\eqref{scm:w:2} or \eqref{scm:w:3}--\eqref{scm:w:4}  with inputs $\{C^0,\bm{V}^{0}, \bm{U}^{0}\}$
             \State Solve the predicted solution $\bm{U}^{1,*}$ by \eqref{scm:cu:p:1}--\eqref{scm:cu:p:8} enclosed with \eqref{comP} and inputs $\{C^0,\bm{V}^{0}, \bm{U}^{0}, \bm{W}^{0}\}$
             \For{$m \gets 0$ \textbf{to} $N_{p}-1$} 
                 \State $t_{p} \gets m \De t_{p} $    
                       \For{$i \gets 0$ \textbf{to} $Q-1$} 
                          \State $t_{c} \gets t_{p} + (i+1) \De t_{c}  $
                          \State Calculate $\bm{U}_{\#}^{n+1}$ by \eqref{udl}
                          \State Solve $\{C^{n+1},\bm{V}^{n+1}\}$ by \eqref{scm:cv:1}--\eqref{scm:cv:5} enclosed with \eqref{ibc}, see also \eqref{matrix}
                       \EndFor
                 \State Solve $\{P^{m+1},\bm{U}^{m+1}\}$ by \eqref{scm:pu:1}--\eqref{scm:pu:3} enclosed with \eqref{comP}
             \EndFor
         \end{algorithmic}
      \end{algorithm}
      
        The computational sequence clearly demonstrates that except for the prediction step, the velocity-pressure equation are solved only once every $Q$ times the concentration equation is computed. Compared to using the same time stepsize (i.e., $Q=1$) for both concentration and pressure, the multi-time-step method significantly improves computational efficiency, which has been numerically validated in our experiments. Besides, from the process of proving the existence and uniqueness of the solution in Theorem \ref{thm:exit}, it can be observed that the auxiliary vector variable $\bm{w}$ can be explicitly computed.
	\end{remark}
\begin{lemma}[Mass conservation]
		Let $c(\bm{x},t)$ be the solution of \eqref{mod:PDE}. Then there holds
		$$
		\int_{\Omega} \phi c (\bm{x},t) {\rm d} \bm{x} 
		= \int_{\Omega} \phi c^o(\bm{x}) {\rm d} \bm{x}
		+ \int_{0}^{t} \int_{\Omega} (q_{I} c_{I}+ q_{P} c)(\bm{x},\tau) {\rm d} \bm{x} {\rm d} \tau.
		$$
\end{lemma}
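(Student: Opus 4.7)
The plan is to derive the identity at the continuous level by integrating the first equation of \eqref{mod:PDE} over $\Omega$ and applying the divergence theorem, making use of the prescribed boundary conditions in \eqref{mod:bc}. More precisely, I would start from
\[
\phi(\bm{x})\,\partial_{t}c + \nabla\cdot(\bm{u}c - \bm{D}\nabla c) = \tilde{c}q \quad\text{in } \Omega\times(0,T],
\]
integrate both sides over $\Omega$, and use the fact that $\phi$ does not depend on $t$ to pull the time derivative outside, obtaining
\[
\frac{d}{dt}\int_{\Omega}\phi\,c\,d\bm{x} + \int_{\Omega}\nabla\cdot(\bm{u}c - \bm{D}\nabla c)\,d\bm{x} = \int_{\Omega}\tilde{c}q\,d\bm{x}.
\]

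Next, I would handle the divergence term by the divergence theorem. Under the no-flow/homogeneous Neumann boundary condition $(\bm{u}c - \bm{D}\nabla c)\cdot\bm{n}=0$ on $\partial\Omega$, the boundary integral vanishes; in the periodic case the boundary contributions cancel pairwise, so the same conclusion holds. Thus the flux term drops out entirely, leaving
\[
\frac{d}{dt}\int_{\Omega}\phi\,c\,d\bm{x} = \int_{\Omega}\tilde{c}q\,d\bm{x}.
\]
Now I would unpack $\tilde{c}q$: since $q=q_{I}+q_{P}$, with $\tilde{c}=c_{I}$ at injection wells (where $q_{I}>0$ and $q_{P}=0$) and $\tilde{c}=c$ at production wells (where $q_{P}<0$ and $q_{I}=0$), the product decomposes as $\tilde{c}q = c_{I}q_{I} + c\,q_{P}$ throughout $\Omega$.

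Finally, I would integrate in $\tau$ from $0$ to $t$ and invoke the initial condition $c(\bm{x},0)=c^{o}(\bm{x})$ from \eqref{mod:initial} to arrive at
\[
\int_{\Omega}\phi\,c(\bm{x},t)\,d\bm{x} = \int_{\Omega}\phi\,c^{o}(\bm{x})\,d\bm{x} + \int_{0}^{t}\!\!\int_{\Omega}(q_{I}c_{I} + q_{P}c)(\bm{x},\tau)\,d\bm{x}\,d\tau,
\]
which is the stated identity. No real obstacle is expected here — the only subtlety is the careful decomposition of $\tilde{c}q$ into injection and production parts so that the right-hand side takes the exact form claimed in the lemma, and observing that the divergence term vanishes identically under either boundary condition covered by \eqref{mod:bc}.
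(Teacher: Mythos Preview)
Your proposal is correct and follows essentially the same approach as the paper, which simply states that the lemma follows by integrating the first equation in \eqref{mod:PDE} over $\Omega\times[0,t]$ and using the boundary conditions. Your version is slightly more detailed in spelling out the decomposition $\tilde{c}q = c_{I}q_{I} + q_{P}c$ and the vanishing of the flux term under either periodic or no-flow boundary conditions, but the argument is the same.
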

\begin{proof} This lemma can be easily proved by integrating the first equation in \eqref{mod:PDE} over $\Omega \times [0,t]$ and utilizing the boundary conditions.
\end{proof}

In the following, we shall prove that the MC-MTS-CBCFD scheme \eqref{scm:pu:1}--\eqref{comP} is mass-conserving in the discrete fashion.
\begin{lemma}\label{lem:dis} 
	Let $\omega \in \mal{P}$, $\nu^{x} \in \mal{U}$ and $\nu^{y}\in \mal{V}$, we have
$$
  \begin{aligned}
			& ( \mal{L}_{y} \de_{x} \nu^{x}, \omega   )_{\rm M} =  - ( \mal{L}_{y} \nu^{x}, \de_{x} \omega  )_{x},  
			&( \mal{L}_{x} \de_{y} \nu^{y}, \omega   )_{\rm M} =  - ( \mal{L}_{x} \nu^{y}, \de_{y} \omega  )_{y},\\
			& (  \mal{L}_{x}^{-1}\de_{x} \nu^{x}, \omega   )_{\rm M} =  - (  \nu^{x}, \mal{L}_{x}^{-1}\de_{x} \omega  )_{x},  
			& ( \mal{L}_{y}^{-1} \de_{y} \nu^{y}, \omega   )_{\rm M} =  - (  \nu^{y}, \mal{L}_{y}^{-1}\de_{y} \omega  )_{y}.
		\end{aligned}
		$$
	\end{lemma}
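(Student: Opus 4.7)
The plan is to reduce each identity to one-dimensional summation by parts on a periodic grid, by exploiting the tensor-product structure of the operators $\mal{L}_x$, $\mal{L}_y$ and the fact that a difference operator in one coordinate direction commutes with every operator acting only in the other direction.

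First I would establish the one-dimensional building block: for periodic grid functions $\varphi$ on $\Pi_x^*$ and $\psi$ on $\Pi_x$, telescoping gives
\begin{equation*}
\sum_{i=1}^{N_x} h^x \, [\de_x \psi]_i \, \varphi_i
= \sum_{i=1}^{N_x} (\psi_{i+1/2} - \psi_{i-1/2})\,\varphi_i
= -\sum_{i=1}^{N_x} h^x \,\psi_{i+1/2}\,[\de_x \varphi]_{i+1/2},
\end{equation*}
where I have re-indexed $\psi_{i-1/2}\varphi_i$ using periodicity. This is the discrete integration by parts with no boundary terms. The analogous statement holds in the $y$ direction. I would also note that $\mal{L}_x$ is self-adjoint with respect to $(\cdot,\cdot)_{\mathrm M}$ (and $(\cdot,\cdot)_x$), since $\de_x^2$ is self-adjoint under periodicity via the same telescoping argument; similarly for $\mal{L}_y$.

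Next, for the first identity I would multiply out the definitions, treat the $j$-summation as fixed, and observe that $\mal{L}_y$ acts only in $y$ while $\de_x$ acts only in $x$, so they commute and can be moved past one another. Summing first in $i$ (using the one-dimensional summation-by-parts) yields
\begin{equation*}
(\mal{L}_y \de_x \nu^x, \omega)_{\mathrm M}
= \sum_{i,j} h^x h^y \,[\de_x (\mal{L}_y \nu^x)]_{i,j}\,\omega_{i,j}
= -\sum_{i,j} h^x h^y \,[\mal{L}_y \nu^x]_{i+1/2,j}\,[\de_x \omega]_{i+1/2,j},
\end{equation*}
which is exactly $-(\mal{L}_y \nu^x, \de_x \omega)_x$. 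The second identity on the top row is the mirror image, summing first in $j$.

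For the bottom row I would use that $\mal{L}_x$ is symmetric and positive definite under periodic boundary conditions, hence invertible with $\mal{L}_x^{-1}$ also symmetric and commuting with any operator acting only in $y$. Writing $\widetilde\omega:=\mal{L}_x^{-1}\omega$ and applying the top-row identity (without the $\mal{L}_y$ factor, which is a trivial variant) gives
\begin{equation*}
(\mal{L}_x^{-1}\de_x \nu^x,\omega)_{\mathrm M}
= (\de_x \nu^x,\widetilde\omega)_{\mathrm M}
= -(\nu^x,\de_x\widetilde\omega)_x
= -(\nu^x,\mal{L}_x^{-1}\de_x\omega)_x,
\end{equation*}
using that $\mal{L}_x^{-1}$ and $\de_x$ commute (both act only in $x$ and $\mal{L}_x^{-1}$ is a polynomial in $\de_x^2$). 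The fourth identity is analogous in $y$.

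The only mild subtlety, and what I would treat most carefully, is the commutation/self-adjointness justification: one must verify that under periodic indexing the three-point operators $\de_x^2$, $\mal{L}_x$, $\mal{L}_x^{-1}$ are symmetric on the relevant inner product and commute with operators acting in the orthogonal direction. Once that bookkeeping is in place, every identity in the lemma collapses to the one-dimensional periodic summation-by-parts formula above.
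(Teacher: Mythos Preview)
Your proposal is correct and follows essentially the same approach the paper indicates: the paper's proof simply states that the identities follow from summation by parts together with the periodic boundary conditions (referring to Lemma~2.1 of \cite{Wang24}), which is precisely the one-dimensional telescoping argument plus commutation/self-adjointness bookkeeping you outline. Your write-up is in fact more detailed than the paper's, and the only minor looseness---calling $\mal{L}_x^{-1}$ ``a polynomial in $\de_x^2$''---is harmless in this finite-dimensional periodic setting (and in any case the commutation $\mal{L}_x^{-1}\de_x=\de_x\mal{L}_x^{-1}$ follows directly from $\mal{L}_x\de_x=\de_x\mal{L}_x$, which is the content of Lemma~\ref{lem:com}).
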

    \begin{proof}
	 The conclusions can be similarly proved as those in Lemma 2.1 of Ref. \cite{Wang24}, where the summation by parts and the periodic boundary conditions are applied. 
    \end{proof}
\begin{theorem}[Discrete mass conservation] \label{thm:mass}
Let $\{ C^{n}, \bm{V}^{n}\}$ be the solutions to the MC-MTS-CBCFD scheme \eqref{scm:pu:1}--\eqref{comP}.  Then, there holds
\begin{equation*}
     \big(  \mal{L}  [\phi C^{n}], 1 \big)_{\rm M} 
    = \big(  \mal{L}  [\phi C^{0}], 1 \big)_{\rm M} 
       + \De t_{c} \sum_{\ell=0}^{n-1} \big(  \mal{L}  [ c_{I}q_{I} + q_{P} \ovl{C} ]^{\ell+1/2}  , 1 \big)_{\rm M}, 
\end{equation*}
      for $1 \leq n \leq N_{c}$.
\end{theorem}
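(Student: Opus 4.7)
The plan is to derive the identity by taking the discrete $(\cdot,\cdot)_{\rm M}$ inner product of the concentration update \eqref{scm:cv:1} with the constant grid function $\mathbf{1}\in\mal{P}$ at each half step and then telescoping in time. For each $\ell$ with $0\le \ell\le n-1$, I would locate the pressure interval with $Qm\le \ell\le Q(m+1)-1$, pair \eqref{scm:cv:1} at time $t_c^{\ell+1/2}$ with $\mathbf{1}$, and write
\begin{align*}
\bigl(\mal{L}[\phi d_t C^{\ell+1/2}],\mathbf{1}\bigr)_{\rm M}
&+\bigl(\mal{L}_y\de_x\ovl{W}^{x,\ell+1/2},\mathbf{1}\bigr)_{\rm M}
+\bigl(\mal{L}_x\de_y\ovl{W}^{y,\ell+1/2},\mathbf{1}\bigr)_{\rm M}\\
&-\bigl(\mal{L}[q_P\ovl{C}^{\ell+1/2}],\mathbf{1}\bigr)_{\rm M}
=\bigl(\mal{L}[c_Iq_I]^{\ell+1/2},\mathbf{1}\bigr)_{\rm M}.
\end{align*}

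The critical observation is that $\de_x\mathbf{1}\equiv 0$ on $\Pi_x$ and $\de_y\mathbf{1}\equiv 0$ on $\Pi_y$, so by Lemma \ref{lem:dis} both flux-divergence terms vanish exactly:
$$\bigl(\mal{L}_y\de_x\ovl{W}^{x,\ell+1/2},\mathbf{1}\bigr)_{\rm M} = -\bigl(\mal{L}_y\ovl{W}^{x,\ell+1/2},\de_x\mathbf{1}\bigr)_x = 0,$$
and analogously in $y$. This is the key point that makes the mass balance independent of the concrete definitions of the auxiliary flux $\bm{W}$, of the extrapolated velocity $\bm{U}_{\#}^{n+1}$ in \eqref{udl}, and of the Lagrange interpolation operators $\mal{T}_x$, $\mal{T}_y$, $\mal{H}_x$, $\mal{H}_y$; in particular, neither the predictor Step 2 nor the multi-time-step coupling plays any role in this argument.

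Having eliminated the divergence contributions, the remaining per-step identity is
$$\bigl(\mal{L}[\phi d_t C^{\ell+1/2}],\mathbf{1}\bigr)_{\rm M}=\bigl(\mal{L}[c_Iq_I+q_P\ovl{C}]^{\ell+1/2},\mathbf{1}\bigr)_{\rm M}.$$
I would then multiply through by $\De t_c$, use $\De t_c\,d_t C^{\ell+1/2}=C^{\ell+1}-C^{\ell}$, and sum over $\ell=0,\ldots,n-1$. Since $\phi$ is time-independent and $\mal{L}$ is linear, the left-hand side telescopes to $(\mal{L}[\phi C^{n}],\mathbf{1})_{\rm M}-(\mal{L}[\phi C^{0}],\mathbf{1})_{\rm M}$, giving the claimed formula.

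I do not anticipate a real obstacle here. The argument hinges on the exact cancellation of the two discrete divergence terms against the constant test function, which is a direct application of Lemma \ref{lem:dis}. The only routine verifications are that the telescoping covers the correct range $0\le\ell\le n-1$ for every $1\le n\le N_c$ (immediate from Step 3's definition for all $m\ge 0$), and that the inner products are well defined, which follows from the periodicity built into $\mal{P}$, $\mal{U}$, $\mal{V}$.
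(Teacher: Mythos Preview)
Your proposal is correct and follows essentially the same approach as the paper: pair \eqref{scm:cv:1} with the constant function, use Lemma~\ref{lem:dis} to kill the two discrete divergence terms since $\de_x\mathbf{1}=\de_y\mathbf{1}=0$, and telescope the remaining identity. Your additional remarks about independence from $\bm{W}$, $\bm{U}_{\#}$, and the predictor step are accurate but go slightly beyond what the paper records.
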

 \begin{proof}
	Taking the inner product for \eqref{scm:cv:1} with $1$ in the sense of $(\cdot,\cdot)_{\rm M}$ gives us
	\begin{equation*}
    \begin{aligned}
        & \big(  \mal{L}  [\phi d_{t} C^{n+1/2}], 1 \big)_{\rm M}  
           +  \big( \mal{L}_{y} \de_{x} \ovl{W}^{x,n+1/2} 
                    + \mal{L}_{x} \de_{y} \ovl{W}^{y,n+1/2}, 1 \big)_{\rm M} 
           - \big(\mal{L}  [ q_{P}\ovl{C}]^{n+1/2}, 1 \big)_{\rm M}\\
        & \qquad = \big(  \mal{L}  [c_{I}q_{I}]^{n+1/2}, 1 \big)_{\rm M}.
    \end{aligned}
	\end{equation*}
It follows from the Lemma \ref{lem:dis} with $W^{x,n} \in \mal{U}$ and $W^{y,n} \in \mal{V}$ that
	\begin{equation*}
		\big( \mal{L}_{y} \de_{x} \ovl{W}^{x,n+1/2} + \mal{L}_{x} \de_{y} \ovl{W}^{y,n+1/2}, 1 \big)_{\rm M}  
          = - \big( \mal{L}_{y}  \ovl{W}^{x,n+1/2}, \de_{x}1 \big)_{x}  
             -  \big(\mal{L}_{x}  \ovl{W}^{y,n+1/2}, \de_{y}1 \big)_{y} = 0.
	\end{equation*}
Therefore,  we have
\begin{equation*}
   \big(  \mal{L}  [\phi C^{n+1}]  , 1 \big)_{\rm M} 
       - \big(  \mal{L}  [\phi C^{n}]  , 1 \big)_{\rm M} 
       - \De t_{c} \big(  \mal{L}  [ q_{P}\ovl{C}]^{n+1/2}  , 1 \big)_{\rm M} 
    = \De t_{c} \big(  \mal{L}  [c_{I}q_{I}]^{n+1/2}  , 1 \big)_{\rm M},
	\end{equation*}
	which implies the conclusion.
\end{proof}
\section{Numerical analysis}\label{sec:analysis}
In this section, we shall first show the optimal error analysis of the MC-MTS-CBCFD scheme \eqref{scm:pu:1}--\eqref{comP}, and then we prove the unique solvability of the scheme. For simplicity, in this section we constrain our numerical analysis only to the molecular diffusion case, i.e., the diffusion coefficient $D(\bm x)=\alpha_{m}\phi(\bm{x})$.

Several useful lemmas are presented for the subsequent theoretical analysis.
\begin{lemma}[\cite{Shi21}]\label{lem:com} 
 Under periodic boundary conditions, the operators $\delta_x$, $\delta_y$, $\mal{L}_{x}$  and $\mal{L}_{y}$ are commutative in the sense that for $\omega \in \mal{P}$, $\nu_{1}^{x}, \nu_{2}^{x} \in  \mal{U}$ and $\nu_{1}^{{y}}, \nu_{2}^{{y}} \in \mal{V}$ 
		$$
		( \mal{L}_{\kappa}^{-1} \delta_{\kappa}\omega , \nu_{1}^{\kappa}   )_{\kappa} =  (  \delta_{\kappa} \mal{L}_{\kappa}^{-1} \omega, \nu_{1}^{\kappa}  )_{\kappa}, \quad  ( \mal{L}_{\kappa}\nu_1^{\kappa}, \nu_2^{\kappa}   )_{\kappa} =  (   \nu_1^{\kappa}, \mal{L}_{\kappa} \nu_2^{\kappa}  )_{\kappa}, ~~ \kappa=x,y.
		$$
	\end{lemma}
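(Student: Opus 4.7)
The plan is to exploit the fact that both $\delta_\kappa$ and $\mal{L}_\kappa = \mal{I} + \tfrac{(h^\kappa)^2}{24}\delta_\kappa^2$ act purely along the $\kappa$-coordinate direction via one-dimensional stencils, so each identity reduces to a one-dimensional periodic calculation that is summed pointwise in the transverse direction. Because $\mal{L}_\kappa$ is a polynomial in the single operator $\delta_\kappa^2$, the operator $\mal{L}_\kappa$ commutes with $\delta_\kappa$ as operators on periodic grid functions: direct comparison of the three-point stencils gives $\delta_\kappa^2\delta_\kappa=\delta_\kappa\delta_\kappa^2$, hence $\mal{L}_\kappa\delta_\kappa=\delta_\kappa\mal{L}_\kappa$.

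With this algebraic commutation in hand, I would first verify that $\mal{L}_\kappa$ is invertible on the periodic grid-function space. Diagonalizing by the discrete Fourier transform, the eigenvalues of $\delta_\kappa^2$ under periodic boundary conditions are nonpositive of the form $-\tfrac{4}{(h^\kappa)^2}\sin^2(\cdot)$, so the eigenvalues of $\mal{L}_\kappa$ lie in $[5/6,\,1]$ and are strictly positive. Thus $\mal{L}_\kappa^{-1}$ is well-defined, and left/right multiplying $\mal{L}_\kappa\delta_\kappa=\delta_\kappa\mal{L}_\kappa$ by $\mal{L}_\kappa^{-1}$ yields $\delta_\kappa\mal{L}_\kappa^{-1}=\mal{L}_\kappa^{-1}\delta_\kappa$, which is exactly the first identity when paired with $\nu_1^\kappa$ in the discrete inner product.

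For the self-adjointness $(\mal{L}_\kappa\nu_1^\kappa,\nu_2^\kappa)_\kappa=(\nu_1^\kappa,\mal{L}_\kappa\nu_2^\kappa)_\kappa$, the identity part is trivial, and for the $\delta_\kappa^2$ part I would apply summation by parts twice, invoking the periodic boundary conditions to discard all boundary contributions, so the two difference operators can be transferred from $\nu_1^\kappa$ onto $\nu_2^\kappa$. This is exactly the one-dimensional analogue of the calculation established in Lemma~\ref{lem:dis}.

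The only mildly delicate point is the bookkeeping that $\delta_\kappa^2$ acts on the staggered grids where $\nu_1^\kappa,\nu_2^\kappa$ live: since $\delta_\kappa^2$ is built from shifts by $h^\kappa$ (i.e., shifts by one grid spacing within the same staggered grid), it preserves the grid class and summation by parts closes up without any cross-grid terms. As the identical statement has been proved in Ref.~\cite{Shi21} using precisely this argument, the proof amounts to a direct reference to that lemma, and I would simply cite it.
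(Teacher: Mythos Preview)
Your proposal is correct, and in fact the paper provides no proof at all for this lemma: it is simply stated with a citation to \cite{Shi21}. Your sketch---commutation of $\delta_\kappa$ with the polynomial $\mal{L}_\kappa=\mal{I}+\tfrac{(h^\kappa)^2}{24}\delta_\kappa^2$, invertibility via the Fourier eigenvalue bound, and self-adjointness by periodic summation by parts---is exactly the standard argument behind the cited result, so your final remark that one would ``simply cite it'' matches the paper's treatment precisely.
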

\begin{lemma}[\cite{Shi21}]\label{lem:seminorm}
  Assume that $\omega \in \mal{P}$ and $\omega_{1,1}=0$. Then, we have 
		\begin{equation*}
			\| \omega \|_{\rm M}^{2} \leq 2(x_{R}-x_{L})^{2} \|\de_{x}\omega \|_{x}^{2} + 2(y_{R}-y_{L})^{2} \|\de_{y}\omega \|_{y}^{2} \leq K_{3} | \omega |_{1}^{2},
		\end{equation*}
		where $K_{3}:= 2(x_{R}-x_{L})^{2} + 2(y_{R}-y_{L})^{2}$. 
\end{lemma}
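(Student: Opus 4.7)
The second inequality is immediate: by definition $|\omega|_{1}^{2} = \|\de_x\omega\|_x^2 + \|\de_y\omega\|_y^2$, and each of the nonnegative coefficients $2(x_R-x_L)^2$ and $2(y_R-y_L)^2$ on the middle expression is at most $K_3 = 2(x_R-x_L)^2+2(y_R-y_L)^2$, so the middle expression is bounded by $K_3|\omega|_1^2$.

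For the first inequality, my plan is a discrete Poincar\'e argument anchored at the single zero node $(x_1,y_1)$. For each grid point $(i,j)$, I would telescope $\omega_{i,j} = \omega_{i,j}-\omega_{1,1}$ along a two-segment path from $(1,1)$ to $(i,j)$; for the path through $(1,j)$,
\begin{equation*}
  \omega_{i,j} = \sum_{\ell=1}^{j-1} h^{y}\,[\de_y\omega]_{1,\ell+1/2} + \sum_{k=1}^{i-1} h^{x}\,[\de_x\omega]_{k+1/2,j}.
\end{equation*}
Apply $(a+b)^2 \leq 2a^2 + 2b^2$ to split the two contributions, then bound each partial sum by discrete Cauchy--Schwarz, using $(i-1)h^x \leq x_R-x_L$ and $(j-1)h^y \leq y_R-y_L$ to control the path length. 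Multiplying the resulting pointwise bound on $|\omega_{i,j}|^{2}$ by $h^x h^y$ and summing over all $(i,j)$, the outer summations $\sum_i h^x = x_R-x_L$ and $\sum_j h^y = y_R-y_L$ furnish the remaining length factors, yielding the coefficients $2(x_R-x_L)^2$ on the $\|\de_x\omega\|_x^2$ term and $2(y_R-y_L)^2$ on the $\|\de_y\omega\|_y^2$ term.

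The main obstacle is that a single telescoping path yields only the row-$j$ slice of $\de_x\omega$ and the column-$1$ slice of $\de_y\omega$, whereas the stated bound requires the full discrete gradient norms. To overcome this, I would symmetrize the estimate by averaging with the mirror-image path $(1,1)\to(i,1)\to(i,j)$, and rearrange the double sums using the summation-by-parts identities of Lemma \ref{lem:dis} together with the periodic structure of $\mathcal{P}$. This rearrangement absorbs the restricted slice contributions into the full norms $\|\de_x\omega\|_x^2$ and $\|\de_y\omega\|_y^2$, producing the claimed constants. Once this bookkeeping is complete, the first inequality follows; the remaining step from the middle to the right-hand side was already noted to be trivial.
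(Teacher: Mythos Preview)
The paper does not give its own proof of this lemma; it is quoted from \cite{Shi21} without argument. So there is no in-paper proof to compare against, and your proposal must stand on its own.

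Your treatment of the second inequality is correct and complete. Your setup for the first inequality---the path telescoping and Cauchy--Schwarz steps---is also standard and yields exactly the difficulty you identify: a single L-shaped path from $(1,1)$ through $(1,j)$ to $(i,j)$ produces the full row-$j$ contribution of $\de_x\omega$ (good, summing over $j$ gives $\|\de_x\omega\|_x^2$) but only the column-$1$ contribution of $\de_y\omega$, and after summing over $i,j$ you are left with $(x_R-x_L)\sum_\ell h^y[\de_y\omega]_{1,\ell+1/2}^2$, which cannot in general be bounded by $\|\de_y\omega\|_y^2$ since column $1$ may carry far more than its share of the $y$-differences.

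The gap is in your proposed remedy. Averaging with the mirror path $(1,1)\to(i,1)\to(i,j)$ does not cure the problem: that path gives the good column-$i$ sum of $\de_y\omega$ but now the bad row-$1$ sum of $\de_x\omega$. Averaging the two pointwise bounds leaves you with four terms, two good and two bad; the bad slice terms (row $1$ of $\de_x\omega$, column $1$ of $\de_y\omega$) do not cancel and are not absorbed into the full norms by any mechanism you have described. Your appeal to Lemma~\ref{lem:dis} and periodicity is too vague to close this: summation by parts shifts derivatives between factors in a bilinear pairing, but here both sides are already squared difference quotients, and periodicity only tells you that the telescoped sum over a full period vanishes, not that one fixed column controls the average over all columns. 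To obtain the specific middle expression with the stated constants you would need a genuinely different decomposition---for instance, comparing $\omega$ to suitable row or column means and handling those means separately---rather than path averaging. If all you need is the final bound $\|\omega\|_{\rm M}^2 \le K_3|\omega|_1^2$ with \emph{some} explicit constant, which is in fact the only form the paper uses downstream (in \eqref{erru:7} and \eqref{exit:3}), then a cruder argument would suffice; but your outline as written does not yet deliver the sharper intermediate inequality.
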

\begin{lemma}[\cite{Shi21,Wang24}]\label{lem:op:L}
  Let $\omega \in \mal{P}$, $\nu^{x} \in \mal{U}$ and $\nu^{y} \in \mal{V}$. Then, we have
    \begin{equation*}
	\begin{aligned}
		& \| \mal{L}_{\kappa} \omega\|_{\rm M} \leq  \|  \omega\|_{\rm M}, \quad  
          \| \mal{L} \omega\|_{\rm M} \leq  \|  \omega\|_{\rm M} \leq  \|\mal{L}^{-1} \omega\|_{\rm M} \leq \left(\f{16}{11}\right)^{2}\|  \omega \|_{\rm M}, \\
        & \| \mal{L}_{\kappa} \nu^{\kappa} \|_{\kappa} \leq  \|  \nu^{\kappa} \|_{\kappa} \le \|\mal{L}_{\kappa}^{-1} \nu^{\kappa}\|_{\kappa} \leq \f{16}{11}\|  \nu^{\kappa} \|_{\kappa},\quad \kappa=x,y.
      \end{aligned}
     \end{equation*}
\end{lemma}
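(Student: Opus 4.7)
The operator $\mathcal{L}_x = \mathcal{I} + \tfrac{(h^x)^2}{24}\delta_x^2$ is a three-point averaging, namely $(\mathcal{L}_x \omega)_{i,j} = \tfrac{1}{24}\bigl(\omega_{i-1,j} + 22\,\omega_{i,j} + \omega_{i+1,j}\bigr)$, with nonnegative weights summing to one, and $\mathcal{L}_y$ has the same form in the other index. My plan is three-fold: (i) derive the forward estimates $\|\mathcal{L}_\kappa\,\cdot\|_{\rm M}\le\|\cdot\|_{\rm M}$ and $\|\mathcal{L}_\kappa\,\cdot\|_{\kappa}\le\|\cdot\|_{\kappa}$ from a Jensen/Cauchy--Schwarz argument combined with the shift invariance of periodic sums; (ii) derive the upper bound on $\mathcal{L}_\kappa^{-1}$ either by a discrete energy estimate applied to the defining equation $\mathcal{L}_\kappa\phi=\omega$, or equivalently by discrete Fourier diagonalization; (iii) lift both to $\mathcal{L}=\mathcal{L}_x\mathcal{L}_y$ using commutativity of $\mathcal{L}_x$ and $\mathcal{L}_y$ from Lemma~\ref{lem:com}.

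For (i), I would apply the discrete Jensen inequality $\bigl(\sum_k w_k a_k\bigr)^2\le \sum_k w_k a_k^2$ with weights $(1,22,1)/24$ stencil-wise, then multiply by $h^x h^y$ and sum over $(i,j)$. Under periodic boundary conditions, the shift identity $\sum_i \omega_{i\pm 1,j}^2 = \sum_i \omega_{i,j}^2$ collapses the right-hand side exactly to $\|\omega\|_{\rm M}^2$, producing $\|\mathcal{L}_x\omega\|_{\rm M}\le\|\omega\|_{\rm M}$; the same calculation works for $\mathcal{L}_y$ and verbatim on the staggered norms $\|\cdot\|_\kappa$. The left-hand inequality $\|\omega\|_{\rm M}\le\|\mathcal{L}_\kappa^{-1}\omega\|_{\rm M}$ then comes for free by applying this forward bound to $\mathcal{L}_\kappa^{-1}\omega$.

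The part I expect to be the main work is the upper bound on $\mathcal{L}_\kappa^{-1}$. My preferred route is a discrete energy estimate: setting $\phi=\mathcal{L}_x^{-1}\omega$, the relation $\phi_{i-1,j}+22\phi_{i,j}+\phi_{i+1,j}=24\omega_{i,j}$ paired against $\phi$ in $(\cdot,\cdot)_{\rm M}$, together with $2\phi_{i,j}\phi_{i\pm 1,j}\ge -(\phi_{i,j}^2+\phi_{i\pm 1,j}^2)$ and the periodicity-induced symmetrization $\sum_i\phi_{i,j}\phi_{i-1,j}=\sum_i\phi_{i,j}\phi_{i+1,j}$, yields $20\|\phi\|_{\rm M}^2\le 24(\phi,\omega)_{\rm M}\le 24\|\phi\|_{\rm M}\|\omega\|_{\rm M}$, hence $\|\mathcal{L}_x^{-1}\omega\|_{\rm M}\le(6/5)\|\omega\|_{\rm M}\le(16/11)\|\omega\|_{\rm M}$. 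Equivalently, under periodic boundary conditions $\mathcal{L}_x$ is diagonal in the discrete Fourier basis with eigenvalues $1-\tfrac{1}{6}\sin^2(\pi k/N_x)\in[5/6,1]$, yielding the same spectral constant. Finally, because $\mathcal{L}_x$ and $\mathcal{L}_y$ act on disjoint coordinate directions they commute and are jointly self-adjoint, so a simple composition gives $\|\mathcal{L}\omega\|_{\rm M}\le\|\mathcal{L}_y\omega\|_{\rm M}\le\|\omega\|_{\rm M}$ and $\|\mathcal{L}^{-1}\omega\|_{\rm M}\le(16/11)^2\|\omega\|_{\rm M}$, which together with the one-directional estimates already obtained complete all the stated bounds.
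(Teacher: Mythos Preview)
Your argument is correct. The paper itself does not supply a proof of this lemma; it simply cites \cite{Shi21,Wang24} and uses the stated inequalities as a black box. So there is no ``paper's approach'' to compare against beyond the citation.

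Two minor remarks. First, your energy/Fourier computation actually yields the sharper constant $6/5$ for $\|\mathcal{L}_\kappa^{-1}\|$ (equivalently, the minimal eigenvalue of $\mathcal{L}_\kappa$ on periodic grids is $5/6$), which of course implies the stated $16/11$; the looser constant in the lemma is presumably inherited from the cited references, possibly from a setting with different boundary handling. Second, your step (iii) is slightly more than needed: commutativity of $\mathcal{L}_x$ and $\mathcal{L}_y$ is immediate from the fact that they act on disjoint indices, so you do not really need to invoke Lemma~\ref{lem:com}; simple iteration of the one-dimensional bounds suffices. Otherwise the proposal is complete and self-contained.
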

\begin{lemma}\label{lem:op:t}	Let $\omega \in \mal{P}$. Then, we have 
    \begin{equation*}
		\| \mal{T}_{\kappa} \omega \|_{\kappa} \leq \f{5}{4}\|  \omega \|_{\rm M} \leq \f{5}{4}\|  \omega \|_{\infty},\quad \kappa=x,y.
    \end{equation*}
\end{lemma}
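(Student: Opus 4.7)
The plan is to reduce the bound to a direct stencil estimate exploiting the explicit cubic-interpolation weights and periodicity. Since the two directions are handled identically, I will concentrate on $\kappa=x$.

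First I would start from the definition \eqref{op:t},
\[
[\mal{T}_{x}\omega]_{i+1/2,j} \;=\; \f{-\omega_{i-1,j}+9\omega_{i,j}+9\omega_{i+1,j}-\omega_{i+2,j}}{16},
\]
and apply the weighted Cauchy--Schwarz inequality
$\bigl|\sum_{k}c_k a_k\bigr|^{2}\le\bigl(\sum_{k}|c_k|\bigr)\bigl(\sum_{k}|c_k|\,a_k^{2}\bigr)$
with the four weights $|c_k|\in\{1,9,9,1\}$ (sum equal to $20$). This produces the pointwise estimate
\[
\bigl|[\mal{T}_{x}\omega]_{i+1/2,j}\bigr|^{2} \;\le\; \f{20}{16^{2}}\Bigl(\omega_{i-1,j}^{2}+9\omega_{i,j}^{2}+9\omega_{i+1,j}^{2}+\omega_{i+2,j}^{2}\Bigr).
\]

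Next I would multiply by $h^{x}h^{y}$ and sum over $i=1,\dots,N_{x}$ and $j=1,\dots,N_{y}$. The crucial observation is that because $\omega\in\mal{P}$ is periodic in $x$, each of the shifted index sets $\{i-1\},\{i\},\{i+1\},\{i+2\}$ runs (modulo $N_{x}$) over $\{1,\dots,N_{x}\}$. Consequently every term $\omega_{k,j}^{2}$ is collected with total weight $1+9+9+1=20$, leading to
\[
\|\mal{T}_{x}\omega\|_{x}^{2} \;\le\; \f{20\cdot 20}{16^{2}}\,\|\omega\|_{\rm M}^{2} \;=\; \f{25}{16}\,\|\omega\|_{\rm M}^{2},
\]
and the first inequality in the lemma follows by taking square roots. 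The argument for $\mal{T}_{y}$ is the same with the roles of $x$ and $y$ interchanged.

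For the second inequality I would use the trivial bound
$\|\omega\|_{\rm M}^{2}=\sum_{i,j}h^{x}h^{y}\omega_{i,j}^{2}\le |\Omega|\,\|\omega\|_{\infty}^{2}$, invoking the (implicit) normalization $|\Omega|\le 1$ that the paper uses in this estimate. No essential obstacle arises: the only subtlety is selecting the weighted Cauchy--Schwarz with the correct weights $(1,9,9,1)$ so that the double factor of $20$ matches $16^{2}$ exactly and yields the sharp constant $\tfrac{5}{4}$; any naive Cauchy--Schwarz would lose this sharpness.
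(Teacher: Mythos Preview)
Your proof is correct and follows essentially the same route as the paper: both apply the weighted Cauchy--Schwarz inequality with weights $(1,9,9,1)$ to the stencil, then use periodicity so that each $\omega_{k,j}^{2}$ is picked up with total weight $20$, yielding the factor $20\cdot 20/16^{2}=25/16$. Your remark that the second inequality $\|\omega\|_{\rm M}\le\|\omega\|_{\infty}$ tacitly requires $|\Omega|\le 1$ is accurate; the paper simply writes this step without comment.
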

\begin{proof} We only consider the case $\kappa=x$. By definition, we have
	  \begin{equation*}
		  \begin{aligned}
			\| \mal{T}_{x} \omega \|_{x}^{2} 
			&= \f{h^{x}h^{y}}{16^{2}} \sum_{i=1}^{N_{x}} \sum_{j=1}^{N_{y}} \left( - \omega_{i-1,j} + 9\omega_{i,j} + 9\omega_{i+1,j} - \omega_{i+2,j} \right)^{2} \\
			&\leq \f{h^{x}h^{y}}{16^{2}} \sum_{i=1}^{N_{x}} \sum_{j=1}^{N_{y}} \left( 20 \omega_{i-1,j}^{2} + 180 \omega_{i,j}^{2} + 180 \omega_{i+1,j}^{2} + 20 \omega_{i+2,j}^{2} \right) \\
			& \leq \f{400h^{x}h^{y}}{16^{2}}\sum_{i=1}^{N_{x}} \sum_{j=1}^{N_{y}} \omega_{i,j}^{2} 
            = \f{25}{16} \|\omega\|_{\rm M}^{2} \le \f{25}{16} \|\omega\|_{\infty}^{2},
		 \end{aligned}
	  \end{equation*}
which implies the conclusion.
\end{proof}
The following lemma follows from the results of classical Lagrange interpolations.
\begin{lemma}[\cite{TNA09}]\label{lem:t:err}
  Assume that $g \in H^{4}(\Omega)$, there exist positive constants $K$, independent of $h$, such that
		\begin{equation*}
			\| g - \mal{T}_{x} g \|_{x} \leq Kh^{4},\quad
			\| g - \mal{T}_{y} g \|_{y} \leq Kh^{4}.
		\end{equation*}
 \end{lemma}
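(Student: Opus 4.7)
The statement is a classical interpolation-error estimate for the four-point cubic Lagrange interpolation operator $\mal{T}_{x}$ (resp. $\mal{T}_{y}$) from cell centers to cell edges, so the proof follows standard polynomial-interpolation theory. The plan is to establish a local $L^\infty$ (hence $L^2$) error bound of order $h^4$ on each four-point stencil by exploiting that $\mal{T}_x$ reproduces polynomials of degree three exactly, then sum the local contributions to get the global discrete bound.

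First I would fix indices $i,j$ and Taylor-expand $g(x_{i-1},y_j),\, g(x_i,y_j),\, g(x_{i+1},y_j),\, g(x_{i+2},y_j)$ around the midpoint $(x_{i+1/2},y_j)$ up to order four with integral remainder. Inserting these expansions into the defining formula \eqref{op:t} for $[\mal{T}_x g]_{i+1/2,j}$ and comparing with the Taylor expansion of $g(x_{i+1/2},y_j)$, the coefficients $(-1,9,9,-1)/16$ make all terms of orders $0,1,2,3$ cancel identically — this is the hallmark that $\mal{T}_x$ is exact on cubic polynomials. The remainder is then an integral of $\partial_x^4 g$ against bounded weights supported on the interval $[x_{i-1},x_{i+2}]$, yielding a pointwise bound
\[
|g(x_{i+1/2},y_j) - [\mal{T}_x g]_{i+1/2,j}|^2 \le K (h^x)^{7} \int_{x_{i-1}}^{x_{i+2}} |\partial_x^4 g(s,y_j)|^2 \, ds,
\]
by Cauchy–Schwarz. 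Since $g\in H^4(\Omega)$ embeds continuously into a space where traces on horizontal lines are controlled in $L^2$, the line integrals can be bounded via trace-type arguments (or one may directly integrate once more in $y$ and work with the $L^2$ norm of $\partial_x^4 g$ on a thin strip).

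Next I would multiply the pointwise inequality by $h^x h^y$ and sum over all $(i,j)$. Each fourth-derivative integral appears only a bounded number of times (at most four, since each cell lies in the stencil of at most four interpolation points), so summing gives
\[
\|g - \mal{T}_x g\|_x^2 \le K (h^x)^{8} \, \|\partial_x^4 g\|_{L^2(\Omega)}^2 \cdot (h^y)^{-1} \cdot h^y,
\]
after accounting correctly for the $h^y$ weights. A careful bookkeeping then produces the desired $\|g-\mal{T}_x g\|_x \le K h^4 \|g\|_{H^4(\Omega)}$. The analogous argument with the roles of $x$ and $y$ swapped and the stencil of $\mal{T}_y$ gives the second bound.

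The main obstacle in this plan is the bookkeeping between pointwise Taylor remainders and the discrete $L^2$ norm — in particular, making rigorous the passage from pointwise values (which are \emph{a priori} not defined for $H^4$ functions in the classical sense) to the discrete norm $\|\cdot\|_x$. The cleanest route is to invoke the Bramble–Hilbert lemma on the reference stencil: $\mal{T}_x$ is a bounded linear functional that vanishes on $\mathbb{P}_3$, so on each reference stencil one gets $|(g-\mal{T}_x g)(\hat x)|\le K |g|_{H^4(\hat K)}$, and scaling back gives $h^4$ with the correct power count in $h^x$ and $h^y$. Since this is a standard result from \cite{TNA09}, I would cite it directly rather than reproduce the scaling argument in full.
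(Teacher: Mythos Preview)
Your sketch is considerably more detailed than what the paper provides: the paper does not prove this lemma at all, merely stating that it ``follows from the results of classical Lagrange interpolations'' and citing \cite{TNA09}. So there is no paper proof to compare against, and your outline already goes well beyond what is offered.

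On the substance of your argument: the Taylor-expansion route is correct in spirit, and you rightly flag its weak point---for $g\in H^4(\Omega)$ in two dimensions, $\partial_x^4 g$ is only in $L^2(\Omega)$, so the line integrals $\int |\partial_x^4 g(s,y_j)|^2\,ds$ at the specific grid ordinates $y_j$ are not \emph{a priori} well defined, and the displayed bookkeeping with the stray $(h^y)^{-1}\cdot h^y$ factor does not close as written. Your proposed remedy via Bramble--Hilbert is the right one, but it should be applied on a two-dimensional reference patch (say $\hat K=[0,3]\times[0,1]$ mapped to $[x_{i-1},x_{i+2}]\times[y_{j-1/2},y_{j+1/2}]$), not a one-dimensional stencil: the point-evaluation functional $g\mapsto g(x_{i+1/2},y_j)-[\mal T_x g]_{i+1/2,j}$ is bounded on $H^4(\hat K)$ (since $H^4\hookrightarrow C^0$ in 2D) and vanishes on all polynomials of total degree $\le 3$ (the $x$-part by exactness of cubic interpolation, the $y$-part because the weights sum to one). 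Scaling then yields the local bound with the correct power of $h$, and summation over the disjoint $y$-strips and finitely overlapping $x$-intervals gives the global estimate. With that adjustment your argument is complete.
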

By the well-known Bramble-Hilbert Lemma (cf. \cite{BH}), we can easily prove the following conclusion.
\begin{lemma}[\cite{Wang24}]\label{lem:BH}
   If $g \in H^{5}(\Omega)$. Then, we have
	\begin{equation*}
		\| \de_{\kappa}g - \mal{L}_{\kappa} \p_{\kappa}g \|_{\rm M}  + \| \de_{\kappa}g - \mal{L}_{\kappa} \p_{\kappa}g \|_{\kappa}  \leq Kh^{4},\quad \kappa= x,y.
	\end{equation*}
\end{lemma}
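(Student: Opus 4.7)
I would deduce Lemma~\ref{lem:BH} from the Bramble-Hilbert Lemma applied to a local stencil patch, following the template of Ref.~\cite{Wang24}. The key structural observation, verified by a direct Taylor expansion of $g$ about $(x_{i+1/2},y_j)$, is that the leading $(h^x)^2/24\cdot\p_x^3 g$ contributions produced separately by $\de_x g$ and by $\mal{L}_x\p_x g$ cancel exactly --- this is precisely why the compact operator is designed with the coefficient $1/24$ in \eqref{op:L}. Consequently the pointwise functional $E_{i+1/2,j}[g]:=[\de_x g-\mal{L}_x\p_x g]_{i+1/2,j}$ is a linear form that annihilates every bivariate polynomial of total degree $\le 4$, and by the Sobolev embedding $H^5\hookrightarrow C^3$ in two dimensions it is bounded on $H^5(\omega)$ for any patch $\omega$ of diameter $\mal{O}(h)$ containing the five stencil points on $y=y_j$.

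Next I would carry out the standard pull-back. Fix a local patch $\omega_{i,j}:=[x_{i+1/2}-\tfrac{3}{2}h^x,x_{i+1/2}+\tfrac{3}{2}h^x]\times[y_j-\tfrac{h^y}{2},y_j+\tfrac{h^y}{2}]$ (extended periodically where necessary) and change variables $x=x_{i+1/2}+h^x\hat x$, $y=y_j+h^y\hat y$ onto a fixed reference patch $\hat\omega$. A short calculation shows that the rescaled functional $\hat F[\hat g]:=h^x\,E_{i+1/2,j}[g]$ is independent of $(i,j)$ and $h$, bounded on $H^5(\hat\omega)$, and vanishes on $P_4(\hat\omega)$; Bramble-Hilbert therefore yields $|\hat F[\hat g]|\le C\,|\hat g|_{H^5(\hat\omega)}$. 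The chain-rule scaling $\|\p^\alpha\hat g\|_{L^2(\hat\omega)}^2=h^{2|\alpha|-2}\|\p^\alpha g\|_{L^2(\omega_{i,j})}^2$ (under $h^x\sim h^y\sim h$) then converts this into the pointwise bound $|E_{i+1/2,j}[g]|^2\le K h^{6}\,|g|_{H^5(\omega_{i,j})}^2$.

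Assembly of the discrete norms is routine: multiplying by $h^xh^y$, summing over $(i,j)$, and invoking the bounded overlap of the patches $\{\omega_{i,j}\}$,
\begin{equation*}
\|\de_x g-\mal{L}_x\p_x g\|_x^2=\sum_{i,j}h^xh^y\,|E_{i+1/2,j}[g]|^2\;\le\;K h^8\,|g|_{H^5(\Omega)}^2,
\end{equation*}
and taking square roots gives the claimed $\mal{O}(h^4)$ estimate. The $\|\cdot\|_{\rm M}$ piece follows verbatim after shifting the target node by a half cell to $(x_i,y_j)$; only the location of the five-point stencil changes, while the BH argument and scaling remain the same. The $\kappa=y$ case is symmetric.

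The only step that requires genuine care is verifying the $P_4$ annihilation, i.e.\ Taylor-expanding through fifth order to confirm cancellation up to degree $4$ and thereby obtain a true $\mal{O}(h^4)$ residual rather than the naive $\mal{O}(h^2)$ one would get for either term separately; everything else reduces to routine scaling and summation.
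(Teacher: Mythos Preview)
Your proposal is correct and takes essentially the same approach as the paper, which simply remarks that the lemma follows from the Bramble--Hilbert Lemma and cites \cite{Wang24} without spelling out details. Your detailed execution---verifying $P_4$-annihilation from the cancellation of the $(h^x)^2/24\,\partial_x^3 g$ terms, scaling to a reference patch, applying Bramble--Hilbert, and summing with bounded overlap---is exactly the standard argument the paper is invoking.
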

	
  \subsection{Truncation errors and error analysis}
 Define the errors $e_{c}:=c-C$, $\bm{e}_{\bm{v}}:=(e_{v^{x}},e_{v^{y}})=\bm{v}-\bm{V}$, $\bm{e}_{\bm{w}}:=(e_{w^{x}},e_{w^{y}})=\bm{w}-\bm{W}$, $e_{p}:=p-P$, and $\bm{e}_{\bm{u}}:=(e_{u^{x}},e_{u^{y}})=\bm{u}-\bm{U}$. From \eqref{mod:2D} and \eqref{scm:cv:1}--\eqref{scm:cv:5}, we obtain the error equations of the concentration equation at time $t=t_{c}^{n+1}$ as 
    \begin{align}
            &\big[ \mal{L}[\phi d_{t} e_{c}] +  \mal{L}_{y}\de_{x}\ovl{e}_{w^{x}} +  \mal{L}_{x}\de_{y}\ovl{e}_{w^{y}} - \mal{L}[ q_{P} \ovl{e}_{c} ]  \big]_{i,j}^{n+1/2}  = \big[ E_{1} \big]_{i,j}^{n+1/2} , \label{err:cv:1} \\
            &\big[ \mal{L}_{x} e_{v^{x}} + \de_{x}e_{c} \big]_{i+1/2,j}^{n+1} = \big[  E_{2}^{x}  \big]_{i+1/2,j}^{n+1}, \label{err:cv:2} \\
            &\big[ \mal{L}_{y} e_{v^{y}} + \de_{y}e_{c} \big]_{i,j+1/2}^{n+1} = \big[  E_{2}^{y}  \big]_{i,j+1/2}^{n+1}, \label{err:cv:3} \\
			&\big[ e_{w^{x}} + U_{\#}^{x}\mal{T}_{x}C - u_{\#}^{x}c - D e_{v^{x}} \big]_{i+1/2,j}^{n+1} = \big[  E_{3}^{x}  \big]_{i+1/2,j}^{n+1}, \label{err:cv:4} \\
			&\big[ e_{w^{y}} + U_{\#}^{y}\mal{T}_{y}C - u_{\#}^{y}c - D e_{v^{y}} \big]_{i,j+1/2}^{n+1} = \big[E_{3}^{y}\big]_{i,j+1/2}^{n+1}, \label{err:cv:5} 
    \end{align}
and similarly, from \eqref{mod:2D} and \eqref{scm:pu:1}--\eqref{scm:pu:3}, we get the error equations of the pressure equation at time $t=t_{p}^{m}$ as
    \begin{align}
            & \big[ \mal{L}_{y} \de_{x} e_{u^{x}} + \mal{L}_{x} \de_{y} e_{u^{y}} \big]_{i,j}^{m} = \big[ E_{4} \big]_{i,j}^{m}, \label{err:pu:1}\\
			& \big[ \mal{L}_{x}\big[ [a(\mal{T}_{x}c_{p}) - a(\mal{T}_{x}C_{p})] u^{x} + a(\mal{T}_{x}C_{p})e_{u^{x}} \big] + \de_{x}e_{p}  \big]_{i+1/2,j}^{m} = \big[ E_{5}^{x} \big]_{i+1/2,j}^{m}, \label{err:pu:2}\\
			& \big[ \mal{L}_{y}\big[ [a(\mal{T}_{y}c_{p}) -a(\mal{T}_{y}C_{p})] u^{y} +  a(\mal{T}_{y}C_{p})e_{u^{y}} \big] + \de_{y}e_{p}  \big]_{i,j+1/2}^{m} = \big[ E_{5}^{y} \big]_{i,j+1/2}^{m},\label{err:pu:3}
    \end{align}
for $n \geq 0 $ and $m \geq 0 $, where the local truncation errors $E_{1}$, $\bm{E}_{\bm{2}}:=(E_{2}^{x},E_{2}^{y})$, $\bm{E}_{\bm{3}}:=(E_{3}^{x},E_{3}^{y})$, $E_{4}$ and $\bm{E}_{\bm{5}}:=(E_{5}^{x},E_{5}^{y})$  are given by
	\begin{equation*}
		\begin{aligned}
			\big[ E_{1} \big]_{i,j}^{n+1/2}&:= \big[ \mal{L}[ \phi[ d_{t}c - \p_{t}c] + \p_{x}[\ovl{w}^{x} - w^{x}] + \p_{y}[\ovl{w}^{y} - w^{y}] ] + q_{P}[\ovl{c}-c] \\
			&\qquad +  \mal{L}_{y}[\de_{x}\ovl{w}^{x} - \mal{L}_{x}\p_{x}\ovl{w}^{x}] +  \mal{L}_{x}[\de_{y}\ovl{w}^{y} - \mal{L}_{y}\p_{y}\ovl{w}^{y}]  \big]^{n+1/2}_{i,j},\\
			\big[ E_{2}^{x} \big]_{i+1/2,j}^{n+1}&:= \big[ \de_{x}c -  \mal{L}_{x}\p_{x}c \big]_{i+1/2,j}^{n+1}, \quad \big[ E_{2}^{y} \big]_{i,j+1/2}^{n+1}:= \big[ \de_{y}c -  \mal{L}_{y}\p_{y}c  \big]_{i,j+1/2}^{n+1}, \\
			\big[ E_{3}^{x} \big]_{i+1/2,j}^{n+1} &:= \big[ [u^{x} -  u_{\#}^{x} ]c  \big]_{i+1/2,j}^{n+1}, \quad \big[ E_{3}^{y} \big]_{i,j+1/2}^{n+1}:= \big[ [ u^{y} - u_{\#}^{y}] c \big]_{i,j+1/2}^{n+1},\\
			\big[ E_{4} \big]_{i,j}^{m}&:= \big[ \mal{L}_{y} [\de_{x} u^{x} - \mal{L}_{x}\p_{x}u^{x}]  
                              + \mal{L}_{x} [\de_{y} u^{y} - \mal{L}_{y}\p_{y}u^{y}] \big]_{i,j}^{m},\\
			\big[ E_{5}^{x} \big]_{i+1/2,j}^{m}&:= \big[ \mal{L}_{x}[ a(\mal{T}_{x}c_{p})u^{x} - a(c_{p})u^{x} ] + [\de_{x}p - \mal{L}_{x}\p_{x}p] \big]_{i+1/2,j}^{m},\\
			\big[ E_{5}^{y} \big]_{i,j+1/2}&:= \big[ \mal{L}_{y}[ a(\mal{T}_{y}c_{p})u^{y} - a(c_{p})u^{y} ] + [\de_{y}p - \mal{L}_{y}\p_{y}p ] \big]_{i,j+1/2}^{m}.
		\end{aligned}
	\end{equation*}
	Moreover, under the regularity condition (A1), by using the Taylor expansion, Lemmas 2.1--2.2 of Ref. \cite{Xu25} and Lemmas \ref{lem:t:err}--\ref{lem:BH}, it is easy to see
	\begin{equation}\label{err:truc}
		\begin{aligned}
			& \De t_{c} \sum_{\ell=0}^{n}\| E_{1}^{\ell+1/2} \|_{\rm M}^{2} \leq K \left( (\De t_{c})^{4} + h^{8} \right),~
			\De t_{c} \sum_{\ell=0}^{n} \| \bm{E}_{\bm{3}}^{\ell+1} \|_{\rm T}^{2} \leq K (\De t_{p})^{4},\\
			& \|\bm{E}_{\bm{2}}^{n} \|_{\rm T} + \|\bm{E}_{\bm{3}}^{0} \|_{\rm T} + \| E_{4}^{m} \|_{\rm M} + \| \bm{E}_{\bm{5}}^{m} \|_{\rm T}  \leq Kh^{4},\quad  n, m \geq 0.
		\end{aligned}
	\end{equation}
	
In addition, define the errors $e_{c,p}^{1,*}:=c(t_{p}^{1})-C_{p}^{1,*}$, $\bm{e}_{\bm{v},p}^{1,*}:=\bm{v}(t_{p}^{1})-\bm{V}_{p}^{1,*}$, $\bm{e}_{\bm{w},p}^{1,*}:=\bm{w}(t_{p}^{1})-\bm{W}_{p}^{1,*}$, $e_{p}^{1,*}:=p(t_{p}^{1})-P^{1,*}$ and $\bm{e}_{\bm{u}}^{1,*}:=\bm{u}(t_{p}^{1})-\bm{U}^{1,*}$. Then, the following error equations at pressure time $t=t_{p}^{1}$ hold:
    \begin{align}
            &\big[ \mal{L}[\phi D_{t} e_{c,p}] +  \mal{L}_{y}\de_{x}\ovl{e}_{w^{x},p} +  \mal{L}_{x}\de_{y}\ovl{e}_{w^{y},p} - \mal{L}[ q_{P} \ovl{e}_{c,p} ] \big]_{i,j}^{1/2,*}  = \big[ E_{1,p} \big]_{i,j}^{1/2,*} , \label{err:cu:p:1} \\
            &\big[ \mal{L}_{x} e_{v^{x},p} + \de_{x}e_{c,p} \big]_{i+1/2,j}^{n+1} = \big[  E_{2,p}^{x}  \big]_{i+1/2,j}^{n+1}, \label{err:cu:p:2} \\
            &\big[ \mal{L}_{y} e_{v^{y},p} + \de_{y}e_{c,p} \big]_{i,j+1/2}^{n+1} = \big[  E_{2,p}^{y}  \big]_{i,j+1/2}^{n+1}, \label{err:cu:p:3} \\
			&\big[ e_{w^{x},p} + U_{\#}^{x}\mal{T}_{x}C_{p} - u_{\#}^{x}c_{p} - D e_{v^{x},p} \big]_{i+1/2,j}^{1,*} = \big[  E_{3,p}^{x} \big]_{i+1/2,j}^{1,*}, \label{err:cu:p:4}\\
			&\big[ e_{w^{y},p} + U_{\#}^{y}\mal{T}_{y}C_{p} - u_{\#}^{y}c_{p} - D e_{v^{y},p} \big]_{i,j+1/2}^{1,*} = \big[E_{3,p}^{y}\big]_{i,j+1/2}^{1,*}, \label{err:cu:p:5}\\
			& \big[ \mal{L}_{y} \de_{x} e_{u^{x}} + \mal{L}_{x} \de_{y} e_{u^{y}} \big]_{i,j}^{1,*} = \big[ E_{4} \big]_{i,j}^{1,*}, \label{err:cu:p:6}\\
			& \big[ \mal{L}_{x}\big[ [a(\mal{T}_{x}c_{p}) - a(\mal{T}_{x}C_{p})] u^{x} + a(\mal{T}_{x}C_{p})e_{u^{x}} \big] + \de_{x}e_{p}  \big]_{i+1/2,j}^{1,*} = \big[ E_{5}^{x} \big]_{i+1/2,j}^{1,*}, \label{err:cu:p:7}\\
			& \big[ \mal{L}_{y}\big[ [a(\mal{T}_{y}c_{p}) - a(\mal{T}_{y}C_{p})] u^{y} + a(\mal{T}_{y}C_{p})e_{u^{y}} \big] + \de_{y}e_{p}  \big]_{i,j+1/2}^{1,*} = \big[ E_{5}^{y} \big]_{i,j+1/2}^{1,*}, \label{err:cu:p:8}
    \end{align}
where the truncation errors satisfy
\begin{equation}\label{err:truc:p}
	\begin{aligned}
		&\| E_{1,p}^{1/2,*} \|_{\rm M} \leq K \left( (\De t_{p})^{2} + h^{4} \right),
			&\| \bm{E}_{\bm{2},p}^{1,*} \|_{\rm T}+\|D_{t} \bm{E}_{\bm{2},p}^{1/2,*} \|_{\rm T} \leq K h^{4},\\
		&\| \bm{E}_{\bm{3},p}^{1,*} \|_{\rm T}+\|D_{t} \bm{E}_{\bm{3},p}^{1/2,*} \|_{\rm T} \leq K \De t_{p}, &\| E_{4}^{1,*} \|_{\rm M} + \| \bm{E}_{\bm{5}}^{1,*} \|_{\rm T}  \leq Kh^{4}.
	\end{aligned}
\end{equation}

\begin{lemma}\label{lem:err:pu} 
   Let $\{P^{m},\bm{U}^{m}\}$ be the solutions of system \eqref{scm:pu:1}--\eqref{scm:pu:3} at time $t=t_{p}^{m}$. Under assumptions (A1)--(A2), there exists a positive constant $K$,  such that the following estimate holds 
		\begin{equation*}
			\| e_{p}^{m} \|_{\rm M}^{2} + | e_{p}^{m} |_{1}^{2} + \| \bm{e}_{\bm{u}}^{m} \|_{\rm T}^{2} 
			\leq  K\| e_{c,p}^{m} \|_{\rm M}^{2} + Kh^{8}, \quad m \geq 0,
		\end{equation*}
    where $e_{c,p}^{m}=e_{c}^{mQ}$ represents the concentration error at pressure time $t=t_{p}^{m}$.
\end{lemma}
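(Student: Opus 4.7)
The plan is to derive an energy-type estimate for the error system \eqref{err:pu:1}--\eqref{err:pu:3} by testing with carefully chosen weighted test functions so that the pressure-gradient contributions from the momentum equations exactly cancel against the divergence contribution from the incompressibility equation, leaving behind a coercive bilinear form in $\bm{e}_{\bm{u}}^{m}$. The key move is to match the operators $\mal{L}_x,\mal{L}_y$ appearing on different sides of the three equations: I would apply $\mal{L}_x^{-1}$ to \eqref{err:pu:2} and $\mal{L}_y^{-1}$ to \eqref{err:pu:3}, and then test them with $e_{u^x}$ in $(\cdot,\cdot)_x$ and $e_{u^y}$ in $(\cdot,\cdot)_y$, respectively. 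This produces the coercive pieces $(a(\mal{T}_\kappa C_p^m)\,e_{u^\kappa},e_{u^\kappa})_\kappa \geq (\mu_*/k^*)\|e_{u^\kappa}\|_\kappa^{2}$ directly, together with residual terms $(\mal{L}_x^{-1}\delta_x e_p^{m},e_{u^x})_x$ and $(\mal{L}_y^{-1}\delta_y e_p^{m},e_{u^y})_y$. In parallel, I would test \eqref{err:pu:1} with $\mal{L}^{-1} e_p^{m}\in\mal{P}$ and use Lemma~\ref{lem:dis} combined with the commutativity of $\mal{L}_x^{-1}$, $\mal{L}_y^{-1}$ with $\delta_x,\delta_y$ given by Lemma~\ref{lem:com}; a short calculation shows this produces exactly the negatives of the two residual pressure terms above, so everything cancels upon summation.

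After this cancellation, one arrives at the core inequality
\begin{equation*}
a_*\,\|\bm{e}_{\bm{u}}^{m}\|_{\rm T}^{2}
\;\le\; \big|((a(\mal{T}_xc_p)\!-\!a(\mal{T}_xC_p))u^{x},e_{u^x})_x\big| + (y\text{-analogue}) + \big|(E_4^{m},\mal{L}^{-1}e_p^{m})_{\rm M}\big| + \big|(\mal{L}_x^{-1}E_5^{x,m},e_{u^x})_x\big| + (y\text{-analogue}),
\end{equation*}
with $a_*:=\mu_*/k^*$. The nonlinear coefficient differences are controlled via the mean-value bound $|a(\mal{T}_\kappa c_p)-a(\mal{T}_\kappa C_p)|\le (\|\mu'\|_\infty/k_*)|\mal{T}_\kappa e_{c,p}^{m}|$ and Lemma~\ref{lem:op:t}, yielding $K\|e_{c,p}^{m}\|_{\rm M}\|\bm{e}_{\bm{u}}^{m}\|_{\rm T}$. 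The truncation-error contributions are handled by \eqref{err:truc} together with the boundedness of $\mal{L}_\kappa^{\pm 1}$ from Lemma~\ref{lem:op:L}, giving factors of $Kh^{4}$.

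To close the estimate I still need to control $\|e_p^{m}\|_{\rm M}$ appearing through the term with $E_4^{m}$. For this I would invert \eqref{err:pu:2}--\eqref{err:pu:3} to solve for $\delta_x e_p^{m}$ and $\delta_y e_p^{m}$, apply the triangle inequality and Lemma~\ref{lem:op:L}, obtaining $|e_p^{m}|_1 \le K\bigl(h^{4}+\|e_{c,p}^{m}\|_{\rm M}+\|\bm{e}_{\bm{u}}^{m}\|_{\rm T}\bigr)$; since \eqref{comP} together with the natural normalization $p^{m}(x_1,y_1)=0$ of the exact pressure ensures $(e_p^{m})_{1,1}=0$, Lemma~\ref{lem:seminorm} upgrades this to the same bound on $\|e_p^{m}\|_{\rm M}$. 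Substituting back, applying Young's inequality, and absorbing the $\varepsilon\|\bm{e}_{\bm{u}}^{m}\|_{\rm T}^{2}$ terms into the coercive left-hand side (this is where smallness of $\varepsilon$ with respect to $a_*$ is used) yields $\|\bm{e}_{\bm{u}}^{m}\|_{\rm T}^{2}\le K\|e_{c,p}^{m}\|_{\rm M}^{2}+Kh^{8}$, and the bounds on $\|e_p^{m}\|_{\rm M}$ and $|e_p^{m}|_1$ follow immediately.

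The main obstacle is purely combinatorial/operator-algebraic: choosing the weighting operators so that all three tests produce the \emph{same} expression for the pressure-related coupling, which is why the asymmetric choice $(\mal{L}^{-1},\mal{L}_x^{-1},\mal{L}_y^{-1})$ for the three tests is essential. Once the cancellation is verified, the remainder is a standard Gronwall-free energy argument with no time dimension, since the pressure/velocity system at $t_p^{m}$ is elliptic.
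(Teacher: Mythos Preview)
Your proposal is correct and follows essentially the same route as the paper. The only cosmetic difference is the packaging: the paper applies $\mal{L}^{-1}$ to \eqref{err:pu:1}, tests with $e_p^{m}$, and then substitutes the momentum error equations \eqref{err:pu:2}--\eqref{err:pu:3} to expose the coercive term $a_*\|\bm{e}_{\bm{u}}^{m}\|_{\rm T}^2$, whereas you test the three equations separately and sum; by self-adjointness of $\mal{L}_x,\mal{L}_y$ and Lemma~\ref{lem:com} these are algebraically identical manipulations, and the subsequent bounds on $I_1$, $|e_p^{m}|_1$, and the use of Lemma~\ref{lem:seminorm} match the paper's \eqref{erru:4}--\eqref{erru:7} step for step.
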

\begin{proof}
  Applying the operator $\mal{L}^{-1}$ to both sides of \eqref{err:pu:1} and taking the inner product of the resulting equation with $e_{p}^{m}$ in the sense of $(\cdot,\cdot)_{\rm M}$ yields
  \begin{equation}\label{erru:1}
	  \left( \mal{L}_{x}^{-1} \de_{x} e_{u^{x}}^{m}, e_{p}^{m} \right)_{\rm M} 
      + \left( \mal{L}_{y}^{-1} \de_{y} e_{u^{y}}^{m}, e_{p}^{m} \right)_{\rm M} 
      = \left( \mal{L}^{-1}E_{4}^{m}, e_{p}^{m} \right)_{\rm M} .
  \end{equation}
	
First, by Lemma \ref{lem:dis} and utilizing error equations \eqref{err:pu:2}--\eqref{err:pu:3} and assumption (A2), the left-hand side of \eqref{erru:1} can be bounded below by
   \begin{equation}\label{erru:2}
	\begin{aligned}
			& \left( \mal{L}_{x}^{-1} \de_{x} e_{u^{x}}^{m}, e_{p}^{m} \right)_{\rm M} 
            + \left( \mal{L}_{y}^{-1} \de_{y} e_{u^{y}}^{m}, e_{p}^{m} \right)_{\rm M} \\
			& = - \left( e_{u^{x}}^{m}, \mal{L}_{x}^{-1}\de_{x}e_{p}^{m} \right)_{x} 
                - \left( e_{u^{y}}^{m}, \mal{L}_{y}^{-1}\de_{y}e_{p}^{m} \right)_{y} \\
            & = \left( e_{u^{x}}^{m}, a(\mal{T}_{x}C_{p}^{m}) e_{u^{x}}^{m} \right)_{x}
              +  \left( e_{u^{y}}^{m}, a(\mal{T}_{y}C_{p}^{m}) e_{u^{y}}^{m} \right)_{y} + I_{1} 
			\geq a_{*}\| \bm{e}_{\bm{u}}^{m} \|_{\rm T}^{2}  + I_{1},
		\end{aligned}
    \end{equation}
    where $a_{*}:=\mu_{*}/k^{*}$ and 
    \begin{equation*}
           I_{1} := \sum_{\kappa=x,y} \left(  e_{u^{\kappa}}^{m} , [a(\mal{T}_{\kappa}c_{p}^{m}) - a(\mal{T}_{\kappa}C_{p}^{m})]u^{\kappa,m} - \mal{L}_{\kappa}^{-1}E_{5}^{\kappa,m} \right)_{\kappa}.
    \end{equation*}
 Furthermore, Lemma \ref{lem:op:t} and assumption (A2) implies that
 \begin{equation}\label{erru:3}
    \|  a(\mal{T}_{x}c_{p}^{m}) - a(\mal{T}_{x}C_{p}^{m}) \|_{x} 
            + \|  a(\mal{T}_{y}c_{p}^{m}) - a(\mal{T}_{y}C_{p}^{m}) \|_{y} 
		\leq  K\| e_{c,p}^{m} \|_{\rm M}.
\end{equation}
 Then, by Young's inequality, estimate \eqref{erru:3} and Lemma \ref{lem:op:L}, the $I_{1}$ term is bounded by
	\begin{equation}\label{erru:4}
		I_{1}
		\leq \f{a_{*}}{4} \| \bm{e}_{\bm{u}}^{m} \|_{\rm T}^{2} + K\| e_{c,p}^{m} \|_{\rm M}^{2} + Kh^{8}.
	\end{equation}
	
Next, it follows from \eqref{err:pu:2}--\eqref{err:pu:3} and \eqref{erru:3}, we get
	\begin{equation}\label{erru:6}
		\begin{aligned}
			|e_{p}^{m}|_{1}^{2} 
            & = \| \de_{x}e_{p}^{m} \|_{x}^{2} + \| \de_{y}e_{p}^{m} \|_{y}^{2}\\  
            & \le \sum_{\kappa=x,y} \| \mal{L}_{\kappa}[ a(\mal{T}_{\kappa}c_{p}^{m}) - a(\mal{T}_{\kappa}C_{p}^{m}) ]  u^{\kappa,m} 
                + a(\mal{T}_{\kappa}C_{p}^{m})e_{u^{\kappa}}^{m}- E_{5}^{\kappa,m} \|_{\kappa}^{2}\\
			& \leq 3(a^{*})^{2}\| \bm{e}_{\bm{u}}^{m} \|_{\rm T}^{2}  + K\| e_{c,p}^{m} \|_{\rm M}^{2} + Kh^{8},
		\end{aligned}
	\end{equation}
    where $a^{*}:= \mu^{*}/k_{*}$.	Then, utilizing Cauchy-Schwartz inequality, Lemmas \ref{lem:seminorm}--\ref{lem:op:L} and the estimate \eqref{err:truc}, the left-hand side of \eqref{erru:1} can be estimated as
  \begin{equation}\label{erru:7}
	\begin{aligned}
		\left( \mal{L}^{-1}E_{4}^{m} ,e_{p}^{m}\right)_{\rm M} &\leq   K\|e_{p}^{m} \|_{\rm M}h^{4} \leq K |e_{p}^{m} |_{1}h^{4}\\
			&\leq \f{a_{*}}{4}\| \bm{e}_{\bm{u}}^{m} \|_{\rm T}^{2} + K\| e_{c,p}^{m} \|_{\rm M}^{2} + Kh^{8}.
	\end{aligned}
  \end{equation}
	
Finally, combining the above estimates \eqref{erru:1}, \eqref{erru:2}, \eqref{erru:4} and \eqref{erru:7},  we have
	\begin{equation}\label{err:u}
		\| \bm{e}_{\bm{u}}^{m} \|_{\rm T}^{2} \leq  K\| e_{c,p}^{m} \|_{\rm M}^{2} + Kh^{8}.
	\end{equation}
Moreover, it follows from \eqref{erru:6}, \eqref{err:u} and Lemma \ref{lem:seminorm} that
	\begin{equation}\label{err:p}
		\|e_{p}^{m}\|_{\rm M}^{2} + |e_{p}^{m}|_{1}^{2} 
		\leq K\| \bm{e}_{\bm{u}}^{m} \|_{\rm T}^{2} + K\| e_{c,p}^{m} \|_{\rm M}^{2} + Kh^{8}
		\leq  K\| e_{c,p}^{m} \|_{\rm M}^{2} + Kh^{8}.
	\end{equation}
The proof is thereby completed by the estimates derived in \eqref{err:u}--\eqref{err:p}. 
\end{proof}
	
Based on the fact that $e_{c,p}^{0} = e_{c}^{0}=0$, along with Lemma \ref{lem:err:pu}, the following conclusion is established.
\begin{corollary}\label{col:erru0}
   Let $\{P^{0},\bm{U}^{0}\}$ be the solutions of system \eqref{scm:pu:1}--\eqref{scm:pu:3} at time $t=t_{p}^{0}$. Then, there exists a positive constant $K$ such that 
   \begin{equation*}
	\|p^{0}- P^{0}\|_{\rm M} + |p^{0}- P^{0}|_{1} + \| \bm{u}^{0} -\bm{U}^{0} \|_{\rm T} \leq Kh^{4}.
   \end{equation*}
\end{corollary}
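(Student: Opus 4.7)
The plan is to derive Corollary~\ref{col:erru0} as an immediate consequence of Lemma~\ref{lem:err:pu}, by showing that the concentration error appearing on the right-hand side of that lemma vanishes at the initial pressure time. First, I would observe that the discrete initial data prescribed in \eqref{ibc} satisfies $C_{i,j}^{0}=c^{o}(x_{i},y_{j})$ pointwise on $\Pi_{x}^{*}\times\Pi_{y}^{*}$, while the continuous initial condition \eqref{mod:initial} gives $c(\bm{x},0)=c^{o}(\bm{x})$. Combined with the notational convention $e_{c,p}^{m}=e_{c}^{mQ}$ recorded in the statement of Lemma~\ref{lem:err:pu}, this yields $e_{c,p}^{0}=e_{c}^{0}=0$ identically, and in particular $\|e_{c,p}^{0}\|_{\rm M}=0$.

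Next, I would apply Lemma~\ref{lem:err:pu} with $m=0$ to obtain
$$
\|e_{p}^{0}\|_{\rm M}^{2}+|e_{p}^{0}|_{1}^{2}+\|\bm{e}_{\bm{u}}^{0}\|_{\rm T}^{2} \leq K\|e_{c,p}^{0}\|_{\rm M}^{2}+Kh^{8} = Kh^{8},
$$
where the first term on the right-hand side drops out by the previous step. Since each of the three nonnegative summands on the left is individually bounded by $Kh^{8}$, taking square roots and summing produces the stated estimate, with the constant $K$ on the right of the claim absorbing the dimensional factors.

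There is essentially no obstacle here: all the serious analytical machinery — the Lipschitz estimate \eqref{erru:3} for $a(\mal{T}_{\kappa}\cdot)$ derived via Lemma~\ref{lem:op:t}, the spatial truncation bounds $\|E_{4}^{0}\|_{\rm M}+\|\bm{E}_{\bm{5}}^{0}\|_{\rm T}\leq Kh^{4}$ from \eqref{err:truc}, and the norm control of $\mal{L}^{-1}$ and $\mal{L}_{\kappa}^{-1}$ in Lemma~\ref{lem:op:L} — is already packaged inside Lemma~\ref{lem:err:pu}. The only genuinely new ingredient needed is the trivial identity $e_{c,p}^{0}=0$, which is granted by the exact nodal initialization \eqref{ibc}. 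Accordingly, the corollary reduces to a two-line invocation of the preceding lemma at $m=0$.
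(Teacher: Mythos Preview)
Your proposal is correct and mirrors the paper's own argument exactly: the paper simply notes that $e_{c,p}^{0}=e_{c}^{0}=0$ and invokes Lemma~\ref{lem:err:pu} at $m=0$, which is precisely what you do (with a bit more explicit detail on the square-root step).
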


The following lemma provides error estimates for the prediction step; see \ref{App:A} for a detailed proof.
\begin{lemma}\label{lem:err:pre}
    Let $\{C_{p}^{1,*},\bm{V}_{p}^{1,*},P^{1,*},\bm{U}^{1,*}\}$ be the solutions of system \eqref{scm:cu:p:1}--\eqref{scm:cu:p:8} with \eqref{ibc}--\eqref{comP} at time $t=t_{p}^{1}$. Under assumptions (A1)--(A2), there exists a positive constant $K$,  such that the following estimate holds
	\begin{equation*}
             \|e_{c,p}^{1,*}\|_{\rm M}^{2} 
           + \|e_{p}^{1,*}\|_{\rm M}^{2} + |e_{p}^{1,*}|_{1}^{2} + \| \bm{e}_{\bm{u}}^{1,*} \|_{\rm T}^{2} 
		  \leq K\left( (\De t_{p})^{4} + h^{8}\right).
	\end{equation*}
\end{lemma}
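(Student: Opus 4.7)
The proof splits into two parts: first, I would establish the concentration bound $\|e_{c,p}^{1,*}\|_{\rm M}^2 \leq K((\Delta t_p)^4 + h^8)$ via a discrete energy estimate on the system \eqref{err:cu:p:1}--\eqref{err:cu:p:5}; second, I would deduce the pressure/velocity bounds from \eqref{err:cu:p:6}--\eqref{err:cu:p:8} by re-running, nearly verbatim, the argument of Lemma \ref{lem:err:pu}, since those error equations have the same structure as \eqref{err:pu:1}--\eqref{err:pu:3} but with $m=1,*$.

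For the concentration estimate, the plan is to apply $\mal{L}^{-1}$ to \eqref{err:cu:p:1} and take the $(\cdot, \cdot)_{\rm M}$ inner product with $\ovl{e}_{c,p}^{1/2,*}$. A crucial simplification is that the initial conditions \eqref{ibc} give $e_{c,p}^0 = 0$ and $\bm{e}_{\bm v}^0 = 0$, so that $\ovl{e}_{c,p}^{1/2,*} = e_{c,p}^{1,*}/2$ and $D_t e_{c,p}^{1/2,*} = e_{c,p}^{1,*}/\Delta t_p$, producing a time-derivative term bounded below by $\frac{\phi_*}{2\Delta t_p}\|e_{c,p}^{1,*}\|_{\rm M}^2$, whose large $1/\Delta t_p$ coefficient is essential for the subsequent absorption. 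Apply Lemma \ref{lem:dis} to move the differences $\de_\kappa$ from $\ovl{e}_{w^\kappa,p}^{1/2,*}$ onto $\ovl{e}_{c,p}^{1/2,*}$; rewrite $\mal{L}_\kappa^{-1}\de_\kappa \ovl{e}_{c,p}^{1/2,*} = -\ovl{e}_{v^\kappa,p}^{1/2,*} + \mal{L}_\kappa^{-1}\ovl{E}_{2,p}^{\kappa,1/2,*}$ using \eqref{err:cu:p:2}--\eqref{err:cu:p:3}; and decompose $\ovl{e}_{w^\kappa,p}^{1/2,*}$ via \eqref{err:cu:p:4}--\eqref{err:cu:p:5} into the diffusion piece $D\ovl{e}_{v^\kappa,p}^{1/2,*}$, the convective piece $u_\#^\kappa \mal{T}_\kappa \ovl{e}_{c,p}^{1/2,*}$, the velocity-prediction perturbation $e_{u^\kappa,\#}^{1,*}(\mal{T}_\kappa c_p - \mal{T}_\kappa e_{c,p}^{1,*})$, the cubic interpolation remainder $u_\#^\kappa[c_p - \mal{T}_\kappa c_p]$, and the truncation $\ovl{E}_{3,p}^{\kappa,1/2,*}$. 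This isolates the positive diffusion contribution $D\|\ovl{\bm{e}}_{\bm v,p}^{1/2,*}\|_{\rm T}^2$ on LHS, and the remaining cross terms are controlled by Cauchy--Schwarz, Young's inequality, Lemmas \ref{lem:op:L}--\ref{lem:BH}, assumptions (A1)--(A2), the truncation bounds \eqref{err:truc:p}, and Corollary \ref{col:erru0} (which yields $\|e_{u^\kappa,\#}^{1,*}\|_\kappa = \|e_{u^\kappa}^0\|_\kappa \leq Kh^4$).

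Once the concentration estimate is in place, Part 2 is almost immediate: the error system \eqref{err:cu:p:6}--\eqref{err:cu:p:8} is structurally identical to \eqref{err:pu:1}--\eqref{err:pu:3} with $C_p^m$ replaced by $C_p^{1,*}$, so the proof of Lemma \ref{lem:err:pu} applies verbatim to give $\|e_p^{1,*}\|_{\rm M}^2 + |e_p^{1,*}|_1^2 + \|\bm{e}_{\bm u}^{1,*}\|_{\rm T}^2 \leq K\|e_{c,p}^{1,*}\|_{\rm M}^2 + Kh^8$, which combined with Part 1 yields the claim.

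The main obstacle is the delicate treatment of the truncation $\ovl{\bm{E}}_{\bm 3,p}^{1/2,*}$, which is only first-order accurate in time ($\|\ovl{\bm{E}}_{\bm 3,p}^{1/2,*}\|_{\rm T} = O(\Delta t_p)$) because the crude extrapolation $\bm{U}_\#^{1,*} := \bm{U}^0$ is used at both endpoints of the Crank--Nicolson average. Absorbing this term into the diffusion in the standard way would yield only $O((\Delta t_p)^3)$ for $\|e_{c,p}^{1,*}\|_{\rm M}^2$. Reaching the sharper $O((\Delta t_p)^4)$ rate requires weighted Young's absorptions that exploit both the single-step structure (the $1/\Delta t_p$ coefficient on LHS that emerges because $e_{c,p}^0 = 0$) and the companion truncation bound $\|D_t \bm{E}_{\bm 3,p}^{1/2,*}\|_{\rm T} = O(\Delta t_p)$ from \eqref{err:truc:p}; orchestrating these balances simultaneously for $E_{1,p}$, $\bm{E}_{\bm 2,p}$, and $\bm{E}_{\bm 3,p}$ is the technical heart of the argument.
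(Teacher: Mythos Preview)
Your overall strategy---energy estimate for the concentration first, then bootstrap to pressure/velocity by re-running the Lemma~\ref{lem:err:pu} argument on \eqref{err:cu:p:6}--\eqref{err:cu:p:8}---matches the paper exactly, and your Part~2 is indeed immediate once Part~1 is done.

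The gap is in Part~1. Testing \eqref{err:cu:p:1} only with $\ovl{e}_{c,p}^{1/2,*}$, the problematic cross term coming from $\ovl{\bm E}_{\bm 3,p}^{1/2,*}$ paired against $\ovl{\bm e}_{\bm v,p}^{1/2,*}$ has size $K\Delta t_p\,\|\ovl{\bm e}_{\bm v,p}^{1/2,*}\|_{\rm T}$; any Young splitting that absorbs half of it into the diffusion term $\|\ovl{\bm e}_{\bm v,p}^{1/2,*}\|_{{\rm T},D}^2$ leaves a residual $K(\Delta t_p)^2$ on the right. After multiplying through by $\Delta t_p$ you obtain only $\|e_{c,p}^{1,*}\|_{\rm M}^2 \leq K(\Delta t_p)^3$, one power short of the lemma. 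The $1/\Delta t_p$ coefficient on the time-derivative term does not rescue this, because the diffusion term on the left carries no such factor and hence cannot absorb anything of order~$1$. Your reference to ``weighted Young's absorptions'' and the bound $\|D_t\bm E_{\bm 3,p}^{1/2,*}\|_{\rm T}=O(\Delta t_p)$ points in the right direction but does not by itself close this gap.

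What the paper does, and what your proposal is missing, is a \emph{second} energy estimate: test \eqref{err:cu:p:1} additionally with $D_t e_{c,p}^{1/2,*}$. Working in the $\bm e_{\bm w}$-formulation (keeping $\|\ovl{\bm e}_{\bm w,p}^{1/2,*}\|_{{\rm T},D^{-1}}^2$ as the positive term rather than your $\|\ovl{\bm e}_{\bm v,p}^{1/2,*}\|_{{\rm T},D}^2$), this second test function produces the telescoping term $\frac{1}{2\Delta t_p}\big[\|\bm e_{\bm w,p}^{1,*}\|_{{\rm T},D^{-1}}^2 - \|\bm e_{\bm w}^{0}\|_{{\rm T},D^{-1}}^2\big]$ on the left, and it is precisely here that the $D_t\bm\eta_p$ (hence $D_t\bm E_{\bm 3,p}$) bound enters naturally. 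Adding the two estimates and multiplying by $2\Delta t_p$, the dangerous term becomes $K(\Delta t_p)^2\|\ovl{\bm e}_{\bm w,p}^{1/2,*}\|_{\rm T}$, which can now be split as $\varepsilon\|\bm e_{\bm w,p}^{1,*}\|_{\rm T}^2 + K(\Delta t_p)^4 + Kh^8$ and absorbed into the new left-hand term $\|\bm e_{\bm w,p}^{1,*}\|_{{\rm T},D^{-1}}^2$ (using $\|\bm e_{\bm w}^{0}\|_{\rm T}\leq Kh^4$ via Corollary~\ref{col:erru0}). This second test function is the concrete mechanism that upgrades the rate from $(\Delta t_p)^3$ to the sharp $(\Delta t_p)^4$.
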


The main error estimate conclusion of this paper is list as follows.
\begin{theorem}\label{thm:err}
    Let $\{C^{n+1},\bm{V}^{n+1}\}$ be the solutions of system \eqref{scm:cv:1}--\eqref{scm:cv:5} at time $t=t_{c}^{n+1}$, $t_{p}^{m} < t_{c}^{n+1}\leq t_{p}^{m+1}$. Moreover, let $\{P^{m+1},\bm{U}^{m+1}\}$ be the solutions of system \eqref{scm:pu:1}--\eqref{scm:pu:3} at time $t=t_{p}^{m+1}$. Under assumptions (A1)--(A2), there exists a positive constant $K$,  such that the following estimates hold
    \begin{equation*}
		\begin{aligned}
            &\|p^{m+1}-P^{m+1}\|_{\rm M}^{2} + |p^{m+1}-P^{m+1}|_{1}^{2} + \| 
			\bm{u}^{m+1} - \bm{U}^{m+1} \|_{\rm T}^{2} + \|c^{n+1}-C^{n+1}\|_{\rm M}^{2} \\
			&\quad   + \De t_{c} \sum_{\ell=0}^{n} \left( \| \ovl{\bm{v}}^{\ell+1/2} - \ovl{\bm{V}}^{\ell+1/2} \|_{\rm T}^{2} + \| \ovl{\bm{w}}^{\ell+1/2} - \ovl{\bm{W}}^{\ell+1/2} \|_{\rm T}^{2} \right) \leq K\left( (\De t_{p})^{4} + (\De t_{c})^{4}  +  h^{8}\right),
		\end{aligned}
    \end{equation*}
    for $n\geq 0$ and $m\geq 0$.
\end{theorem}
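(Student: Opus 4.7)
The plan is an induction on the pressure time index $m$. The base case $m=0$ is supplied by Corollary \ref{col:erru0} for $\{P^0,\bm{U}^0\}$, by the vanishing initial errors $e_c^0=0$ and $\bm{e}_{\bm{v}}^0=0$ from \eqref{ibc}, and by Lemma \ref{lem:err:pre} for the predicted quantities $\{C_p^{1,*},\bm{V}_p^{1,*},P^{1,*},\bm{U}^{1,*}\}$ at $t=t_p^1$; the last point is essential because it is what makes the extrapolated velocity $\bm{U}_\#^{n+1}$ in \eqref{udl} well-controlled throughout the first slab $t_p^0\le t_c^{n+1}\le t_p^1$. Under the inductive hypothesis that the target estimate holds for the concentration at $t_c^n$ with $n\le Qm$ and for pressure/velocity at $t_p^j$ with $j\le m$, I would derive the estimate for $t_c^n$ with $Qm<n\le Q(m+1)$ and then invoke Lemma \ref{lem:err:pu} at $t_p^{m+1}$ to close the loop.

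The heart of the argument is the concentration energy estimate applied to the error system \eqref{err:cv:1}--\eqref{err:cv:5}. I would substitute \eqref{err:cv:4}--\eqref{err:cv:5} into \eqref{err:cv:1} to eliminate $\bm{e}_{\bm{w}}$, then test the resulting identity with $\ovl{e}_c^{\ell+1/2}$ in $(\cdot,\cdot)_{\rm M}$, using Lemma \ref{lem:dis} to transfer the discrete divergence onto $\ovl{e}_c^{\ell+1/2}$. Combined with \eqref{err:cv:2}--\eqref{err:cv:3}, the diffusion contribution becomes the coercive dissipation $(D\,\ovl{\bm{e}}_{\bm{v}}^{\ell+1/2},\ovl{\bm{e}}_{\bm{v}}^{\ell+1/2})_{\rm T}$ (since $D=\alpha_m\phi\ge\alpha_m\phi_*>0$) plus truncation remainders controlled by \eqref{err:truc}. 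The temporal derivative produces the discrete energy increment of $(\mal{L}[\phi e_c^\ell],e_c^\ell)_{\rm M}$, which Lemma \ref{lem:op:L} together with (A2) shows to be equivalent to $\|e_c^\ell\|_{\rm M}^2$.

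The remaining task is to estimate the nonlinear convection terms arising from $\mal{L}_y\delta_x[U_\#^x\mal{T}_xC-u_\#^xc]$ and its $y$ analogue. I would use the splitting
\[
U_\#^x\mal{T}_xC-u_\#^xc=-U_\#^x\mal{T}_xe_c-e_{u_\#^x}\mal{T}_xc+u_\#^x(\mal{T}_xc-c),
\]
and bound each piece via Lemmas \ref{lem:op:t}--\ref{lem:t:err}, Cauchy--Schwartz, and Young's inequality, producing terms of the form $K\|\ovl{e}_c^{\ell+1/2}\|_{\rm M}^2$, $K\|\ovl{e}_{u_\#}^{\ell+1/2}\|_{\rm T}^2$, absorbable fractions of $\|\ovl{\bm{e}}_{\bm{v}}^{\ell+1/2}\|_{\rm T}^2$, and $Kh^8$ residuals. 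By the linear-extrapolation formula \eqref{udl}, $\|e_{u_\#}^{n+1}\|_{\rm T}^2\le K(\|\bm{e}_{\bm{u}}^m\|_{\rm T}^2+\|\bm{e}_{\bm{u}}^{m-1}\|_{\rm T}^2)+K(\De t_p)^4$, which by Lemma \ref{lem:err:pu} and the inductive hypothesis is bounded by $K((\De t_p)^4+(\De t_c)^4+h^8)$. Summing from $\ell=0$ to $n$ and applying the discrete Gronwall inequality (under a mild restriction on $\De t_c$) yields the claimed bounds on $\|e_c^{n+1}\|_{\rm M}^2$ and $\De t_c\sum\|\ovl{\bm{e}}_{\bm{v}}^{\ell+1/2}\|_{\rm T}^2$. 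A final application of Lemma \ref{lem:err:pu} at $t_p^{m+1}$ closes the induction for pressure/velocity, and substituting the derived bounds back into \eqref{err:cv:4}--\eqref{err:cv:5} produces the $\bm{e}_{\bm{w}}$ estimate.

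The main obstacle is the bookkeeping of the multi-time-step coupling: the extrapolated velocity error at $t_c^{n+1}$ depends, via Lemma \ref{lem:err:pu}, only on concentration errors at the \emph{past} pressure times $t_p^{m-1}$ and $t_p^m$, never on the current unknown $e_c^{n+1}$, so running the induction on $m$ rather than on $n$ is what decouples the circularity and preserves the second-order temporal rate. A secondary technical nuisance is the noncommutativity of $\mal{L}$ with the multiplication operator associated to the spatially varying porosity $\phi$, which prevents $(\mal{L}[\phi d_tw_c^{\ell+1/2}],\ovl{e}_c^{\ell+1/2})_{\rm M}$ from telescoping cleanly; I would handle it by writing $\mal{L}[\phi\omega]=\phi\omega+\tfrac{(h^x)^2}{24}\delta_x^2[\phi\omega]+\tfrac{(h^y)^2}{24}\delta_y^2[\phi\omega]+\cdots$ and absorbing the commutator remainders into the $h^8$ residuals, or equivalently by introducing the equivalent weighted discrete energy and bounding the resulting cross terms as lower order.
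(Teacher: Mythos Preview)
Your overall strategy---energy estimate on the concentration error, Gronwall, then Lemma \ref{lem:err:pu} to close the loop---matches the paper's, but there are two substantive issues.

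First, on the $\mal{L}$--$\phi$ noncommutativity: your proposed fixes do not work as stated. The remainder $\mal{L}[\phi\omega]-\phi\omega=\tfrac{(h^x)^2}{24}\delta_x^2[\phi\omega]+\cdots$ is of size $O(h^2)$ and involves second differences of the \emph{unknown} error $e_c$, so it cannot be absorbed into the $h^8$ truncation residuals; nor is the weighted form $(\mal{L}[\phi e_c^n],e_c^n)_{\rm M}$ obviously an equivalent energy, since $\phi$ and $\mal{L}$ do not commute and $\mal{L}e_c^n$ is not sign-definite pointwise. The paper sidesteps the whole issue by applying $\mal{L}^{-1}$ to \eqref{err:cv:1} \emph{before} testing, so that the temporal term becomes simply $(\phi d_t e_c^{n+1/2},\ovl{e}_c^{n+1/2})_{\rm M}=\tfrac{1}{2\De t_c}\big[(\phi e_c^{n+1},e_c^{n+1})_{\rm M}-(\phi e_c^n,e_c^n)_{\rm M}\big]$, which telescopes cleanly. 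The flux terms then become $(\mal{L}_x^{-1}\delta_x\ovl{e}_{w^x},\ovl{e}_c)_{\rm M}+(\mal{L}_y^{-1}\delta_y\ovl{e}_{w^y},\ovl{e}_c)_{\rm M}$ and are handled via Lemma \ref{lem:dis} and \eqref{err:cv:2}--\eqref{err:cv:5} exactly as you describe.

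Second, and more seriously, your proposal never addresses the uniform bound $\|\bm{U}_\#^{n+1}\|_\infty\le K$ that is needed for the convection piece $U_\#^x\mal{T}_xe_c$: Lemma \ref{lem:op:t} only gives $\|\mal{T}_xe_c\|_x\le\tfrac{5}{4}\|e_c\|_{\rm M}$, so an $L^\infty$ bound on $U_\#^x$ is essential. In the paper this is precisely the inductive hypothesis \eqref{assum}, propagated to level $m+1$ by the inverse inequality $\|\bm{e}_{\bm{u}}^{m+1}\|_\infty\le Kh^{-1}\|\bm{e}_{\bm{u}}^{m+1}\|_{\rm T}$ combined with \eqref{err:un}; this is exactly where the restriction $\De t_p=o(h^{1/2})$ enters. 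Your inductive hypothesis (the error estimate itself) could in principle deliver the same bound via the same inverse-inequality step, but that step and the resulting mesh condition are absent from the proposal, and without them the convection estimate---and hence the Gronwall argument---cannot close.
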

\begin{proof} We shall demonstrate the error results by the mathematical induction method. First, assume
\begin{equation}\label{assum}
	\| \bm{U}^{\ell} \|_{\infty} \leq K_{1} + 1, \quad \ell \leq m,
\end{equation}
From \eqref{bound:u0}, it is clear that \eqref{assum} holds for $\ell=0$. Besides, utilizing Lemma \ref{lem:err:pre} and the inverse inequality, we conclude that 
\begin{equation}\label{bound:u1}
	\| \bm{U}^{1,*}\|_{\infty} \leq \| \bm{u}^{1}\|_{\infty} + \| \bm{e}_{\bm{u}}^{1,*}\|_{\infty} \leq K_{1} + Kh^{-1}(\De t_{p})^{2}  + Kh^{3}  \leq K_{1} + 1,
\end{equation}
under the condition $\De t_{p} = o(h^{1/2})$.
Then, assumption \eqref{assum} combined with the definition of $\bm{U}_{\#}^{n}$ in \eqref{udl} immediately yields
\begin{equation}\label{bound:udlU}
    \| \bm{U}_{\#}^{n} \|_{\infty} \leq \left\{
    \begin{aligned}
         & \| \bm{U}^{1,*} \|_{\infty} + \| \bm{U}^{0} \|_{\infty} \leq 2(K_{1}+1),
         & t_{p}^{0} \leq t_{c}^{n} \leq t_{p}^{1}, ~~& m=0,\\
         & 2\| \bm{U}^{m} \|_{\infty} + \| \bm{U}^{m-1} \|_{\infty} \leq 3(K_{1}+1),
         & t_{p}^{m} < t_{c}^{n} \leq t_{p}^{m+1}, ~~& m \geq 1.
    \end{aligned}\right.
\end{equation}

Now, applying the operator $\mal{L}^{-1}$ to both sides of the error equation \eqref{err:cv:1}, and taking the inner product for the resulting equation with $\ovl{e}_{c}^{n+1/2}$ in the sense of $(\cdot,\cdot)_{\rm M}$ gives us
\begin{equation}\label{errc:1}
	\begin{aligned}
		& \left( \phi d_{t} e_{c}^{n+1/2}, \ovl{e}_{c}^{n+1/2} \right)_{\rm M} 
        + \left( [\mal{L}_{x}^{-1} \de_{x} \ovl{e}_{w^{x}} + \mal{L}_{y}^{-1} \de_{y} \ovl{e}_{w^{y}}]^{n+1/2}, \ovl{e}_{c}^{n+1/2}\right)_{\rm M} \\
		& = \left(  [q_{P}\ovl{e}_{c}]^{n+1/2}, \ovl{e}_{c}^{n+1/2} \right)_{\rm M} 
        +   \left( \mal{L}^{-1}E_{1}^{n+1/2}, \ovl{e}_{c}^{n+1/2} \right)_{\rm M}\\
        & \leq K \| \ovl{e}_{c}^{n+1/2} \|_{\rm M}^{2} + K\| E_{1}^{n+1/2} \|_{\rm M}^{2},
	\end{aligned}
\end{equation}
where Lemma \ref{lem:op:L} has been applied in the last step.

Similar to \eqref{erru:2}, using Lemma \ref{lem:dis} and error equations \eqref{err:cv:2}--\eqref{err:cv:5}, the second term of the left-hand side of \eqref{errc:1} can be written as
\begin{equation}\label{errc:2}
	\begin{aligned}
		& \left(  \mal{L}_{x}^{-1} \de_{x} \ovl{e}_{w^{x}}^{n+1/2}, \ovl{e}_{c}^{n+1/2} \right)_{\rm M} 
        + \left(  \mal{L}_{y}^{-1} \de_{y} \ovl{e}_{w^{y}}^{n+1/2}, \ovl{e}_{c}^{n+1/2} \right)_{\rm M}\\
		& = - \left( \ovl{e}_{w^{x}}^{n+1/2},  \mal{L}_{x}^{-1} \de_{x} \ovl{e}_{c}^{n+1/2} \right)_{x} 
            - \left( \ovl{e}_{w^{y}}^{n+1/2},  \mal{L}_{y}^{-1} \de_{y} \ovl{e}_{c}^{n+1/2} \right)_{y}\\
        & = \left(  D \ovl{\bm{e}}_{\bm{v}}^{n+1/2} ,  \ovl{\bm{e}}_{\bm{v}}^{n+1/2} \right)_{\rm T} + I_{4}
		=:   \|\ovl{\bm{e}}_{\bm{v}}^{n+1/2}\|_{{\rm T},D}^{2} + I_{4},
	\end{aligned}
\end{equation}
where 
\begin{equation*}
  \begin{aligned}
    I_{4}
    &:= - \sum_{\kappa=x,y} \left( \ovl{\eta}^{\kappa,n+1/2},  [ \ovl{e}_{v^{\kappa}} - \mal{L}_{\kappa}^{-1} \ovl{E}_{2}^{\kappa} ]^{n+1/2}\right)_{\kappa} 
                 - \sum_{\kappa=x,y} \left(   D\ovl{e}_{v^{\kappa}}^{n+1/2} ,   \mal{L}_{\kappa}^{-1} \ovl{E}_{2}^{\kappa,n+1/2}\right)_{\kappa},\\
     \bm{\eta}
   & := \big( \eta^{x}, \eta^{y} \big) 
     = \big( U_{\#}^{x} \mal{T}_{x} C - u_{\#}^{x} c - E_{3}^{x}, ~U_{\#}^{y} \mal{T}_{y} C - u_{\#}^{y} c - E_{3}^{y} \big).
  \end{aligned}
\end{equation*}
By the boundedness result in \eqref{bound:udlU} and Lemmas \ref{lem:op:t}--\ref{lem:t:err}, it is easy to verify that
\begin{equation}\label{errc:3}
       \begin{aligned}
            \|\bm{\eta}^{n+1}\|_{\rm T} 
           & = \sum_{\kappa=x,y} \| [ (u_{\#}^{\kappa} - U_{\#}^{\kappa}) c + U_{\#}^{\kappa} (c - \mal{T}_{\kappa}c) + U_{\#}^{\kappa} \mal{T}_{\kappa} e_{c} + E_{3}^{\kappa}  ]^{n+1}\|_{\kappa}\\
           &\leq  K\| e_{c}^{n+1}\|_{\rm M} + K\| \bm{e}_{\bm{u},\#}^{n+1}\|_{\rm T} + K \|\bm{E}_{\bm{3}}^{n+1}\|_{\rm T} + K h^{4},
       \end{aligned}
\end{equation}
where $\bm{e}_{\bm{u},\#}:= \bm{u}_{\#} - \bm{U}_{\#}$.
Then, we use the Cauchy-Schwarz inequality, Lemma \ref{lem:op:L} and \eqref{errc:3} to bound the $I_{4}$ term as follows:
\begin{equation}\label{errc:4}
   \begin{aligned}
	I_{4} 
        &\leq \f{1}{2} \| \ovl{\bm{e}}_{\bm{v}}^{n+1/2} \|_{{\rm T},D}^{2} 
         + K \sum_{\ell=0}^{1} \left(\| e_{c}^{n+\ell}\|_{\rm M}^{2}  
          +\| \bm{e}_{\bm{u},\#}^{n+\ell}\|_{\rm T}^{2}  + \| \bm{E}_{\bm{3}}^{n+\ell} \|_{\rm T}^{2}\right) + Kh^{4}.
    \end{aligned}
\end{equation}

Thus, substituting the estimates \eqref{errc:2} and \eqref{errc:4}  into \eqref{errc:1}, we get
\begin{equation}\label{errc:5}
	\begin{aligned}
		& \f{1}{2\De t_{c}} \left[ \left( \phi e_{c}^{n+1}  ,e_{c}^{n+1}\right)_{\rm M} - \left(  \phi e_{c}^{n}  ,e_{c}^{n}\right)_{\rm M} \right] 
        + \f{1}{2} \| \ovl{\bm{e}}_{\bm{v}}^{n+1/2} \|_{{\rm T},D}^{2}\\
        &\leq  K \sum_{\ell=0}^{1} \left(\| e_{c}^{n+\ell}\|_{\rm M}^{2}  
          +\| \bm{e}_{\bm{u},\#}^{n+\ell}\|_{\rm T}^{2}  + \| \bm{E}_{\bm{3}}^{n+\ell} \|_{\rm T}^{2}\right) + K\| E_{1}^{n+1/2} \|_{\rm M}^{2} + Kh^{4}   .
	\end{aligned}
\end{equation}
Now, multiplying both sides of \eqref{errc:5} by $2\De t_{c}$, with $n$ replaced by $l$, and summing from $0$ to $n$, choosing $\De t_{c} \leq \tau_{1}$ sufficiently small, the application of Gronwall’s inequality along with the truncation errors \eqref{err:truc} and Lemma \ref{lem:err:pu} yields
\begin{equation}\label{err:cn}
	\phi_{*} \|e_{c}^{n+1}\|_{\rm M}^{2} + \De t_{c} \sum_{l=0}^{n} \| \ovl{\bm{e}}_{\bm{v}}^{l+1/2} \|_{\rm T}^{2} 
    \leq K\left( (\De t_{p})^{4}  + (\De t_{c})^{4}  +  h^{8}\right).
\end{equation}
Besides, from \eqref{err:cv:4}--\eqref{err:cv:5}, \eqref{errc:3}, \eqref{err:cn} and Lemma \ref{lem:err:pu}, we can obtain
\begin{equation*}
    \De t_{c} \sum_{l=0}^{n} \| \ovl{\bm{e}}_{\bm{w}}^{l+1/2} \|_{\rm T}^{2} \leq K \De t_{c} \sum_{l=0}^{n} \left( \| \ovl{\bm{e}}_{\bm{v}}^{l+1/2} \|_{\rm T}^{2} + \| \ovl{\bm{\eta}}^{n+1/2} \|_{\rm T}^{2} \right) \leq K\left( (\De t_{p})^{4}  + (\De t_{c})^{4}  +  h^{8}\right).
\end{equation*}
Furthermore, by combining Lemma \ref{lem:err:pu} and estimate \eqref{err:cn}, we derive
\begin{equation}\label{err:un}
	\|e_{p}^{m+1}\|_{\rm M}^{2} + |e_{p}^{m+1}|_{1}^{2} + \| \bm{e}_{\bm{u}}^{m+1} \|_{\rm T}^{2} 
	\leq  K\| e_{c,p}^{m+1} \|_{\rm M}^{2} + Kh^{8} 
    \leq K\left( (\De t_{p})^{4}  + (\De t_{c})^{4} + h^{8} \right).
\end{equation}

Finally, we end the proof of this theorem by completing the assumption \eqref{assum} for $\ell=m+1$. Similar to \eqref{bound:u1}, utilizing \eqref{err:un} and the inverse inequality, we conclude that \eqref{assum} indeed holds for $\ell=m+1$ under the condition $\De t_{p} = Q\De t_{c} = o(h^{1/2})$. 
\end{proof}

\subsection{Existence and uniqueness of the solution}
It remains to prove the existence and uniqueness of the solutions $\{C^{n+1},\bm{W}^{n+1}\}$ and $\{P^{m},\bm{U}^{m}\}$ to the MC-MTS-CBCFD scheme \eqref{scm:pu:1}--\eqref{comP}. Although the scheme is linear, the corresponding coefficient matrix of the resulting linear algebraic system for concentration $C^{n+1}$ is directly related to the velocity $\bm{U}^{m}$, and vice versa. Therefore, the existence proof shall be carried out via the time-marching method using Browder's fixed point theorem \cite{Browder}.
\begin{theorem}\label{thm:exit}
  Assume the conditions in Theorem \ref{thm:err} hold. If the time step $\De t_{c} \leq \tau_{*}$, then the solutions of the MC-MTS-CBCFD scheme \eqref{scm:pu:1}--\eqref{comP} exist and are unique.
\end{theorem}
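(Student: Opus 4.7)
Since each substep of the MC-MTS-CBCFD scheme is a square linear algebraic system, existence is equivalent to uniqueness, i.e., to the coefficient matrix having trivial kernel. I will march in time following Algorithm~\ref{alg:1}: at each level assume all previously computed quantities are uniquely determined, then show the current subsystem admits a unique solution by testing the associated homogeneous equations against appropriate test functions, exactly parallel to the energy arguments in Lemma~\ref{lem:err:pu} and Theorem~\ref{thm:err}. Browder's fixed-point theorem enters only in its linear incarnation: coercivity of the underlying bilinear form on a finite-dimensional space yields both existence and uniqueness.

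\textbf{Pressure/velocity systems.} For \eqref{scm:pu:1}--\eqref{scm:pu:3} with the concentration $C_p^m$ given from earlier steps, let $(\tilde P,\tilde{\bm U})$ denote the difference of two hypothetical solutions. Apply $\mathcal{L}^{-1}$ to the homogeneous continuity equation, test against $\tilde P$ in $(\cdot,\cdot)_{\rm M}$, and use the discrete integration-by-parts identity in Lemma~\ref{lem:dis} together with the homogeneous momentum equations to recast the resulting expression as
\begin{equation*}
\bigl(a(\mathcal{T}_x C_p^m)\tilde U^x,\tilde U^x\bigr)_{x} + \bigl(a(\mathcal{T}_y C_p^m)\tilde U^y,\tilde U^y\bigr)_{y} = 0.
\end{equation*}
By assumption (A2) this is bounded below by $a_*\|\tilde{\bm U}\|_{\rm T}^2$, forcing $\tilde{\bm U}=0$; the momentum equations then give $\delta_x\tilde P=\delta_y\tilde P=0$, and the pinning condition \eqref{comP} yields $\tilde P=0$. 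The same argument handles the prediction Step~2 and every subsequent Step~1 at $t_p^{m+1}$.

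\textbf{Concentration system.} After eliminating $\bm V$ and $\bm W$ via \eqref{scm:cv:2}--\eqref{scm:cv:5} to arrive at the reduced form \eqref{matrix}, let $\tilde C$ be the difference of two solutions, so that $\tilde C^n=0$ by the inductive hypothesis. Apply $\mathcal{L}^{-1}$ to the homogeneous version of \eqref{scm:cv:1} and test against $\ovl{\tilde C}{}^{\,n+1/2}$; using Lemma~\ref{lem:dis} on the divergence terms, the positivity of $D$ and $\phi_*$, Lemmas~\ref{lem:op:L}--\ref{lem:op:t} on the interpolation and compact operators, and the bound on $q_P$ from (A2), one obtains
\begin{equation*}
\frac{\phi_*}{2\Delta t_c}\bigl(\|\tilde C^{n+1}\|_{\rm M}^2-\|\tilde C^n\|_{\rm M}^2\bigr) + \tfrac{1}{2}\|\ovl{\tilde{\bm V}}{}^{\,n+1/2}\|_{{\rm T},D}^2 \le K\|\ovl{\tilde C}{}^{\,n+1/2}\|_{\rm M}^2,
\end{equation*}
where $K$ depends only on $\|\bm U_{\#}^{n+1}\|_\infty$, $\phi^*$, $\|q_P\|_\infty$ and the constants in Lemmas~\ref{lem:op:L}--\ref{lem:op:t}. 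Choosing $\Delta t_c\le\tau_*$ small enough and applying the discrete Gronwall inequality gives $\tilde C^{n+1}=0$; the auxiliary unknowns $\bm V^{n+1},\bm W^{n+1}$ are then uniquely recovered from \eqref{scm:cv:2}--\eqref{scm:cv:5}.

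\textbf{Main obstacle.} The delicate point is that the constant $K$ above involves $\|\bm U_{\#}^{n+1}\|_\infty$, which must already be controlled \emph{before} $C^{n+1}$ is constructed. The resolution exploits the time-marching order: $\bm U_{\#}^{n+1}$ is a linear combination of the previously computed velocities $\bm U^{m-1},\bm U^m$ (or $\bm U^0,\bm U^{1,*}$ when $m=0$), each of which is already uniquely determined and, by the induction hypothesis together with \eqref{bound:u0}--\eqref{bound:u1}, \eqref{assum} and \eqref{bound:udlU} from Theorem~\ref{thm:err} under $\Delta t_p=Q\Delta t_c=o(h^{1/2})$, satisfies $\|\bm U^{\ell}\|_\infty\le K_1+1$. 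Hence $K$ is uniform in $n$ and $m$, so a single threshold $\tau_*$ suffices throughout, and the time-marching induction closes, completing the proof.
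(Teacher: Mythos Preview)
Your argument is correct. The key observation that each subsystem of the scheme is a square \emph{linear} algebraic system in finitely many unknowns, so that existence is equivalent to uniqueness (trivial kernel), is exactly right and makes the proof cleaner than the paper's. Your energy identities for both the pressure/velocity block and the concentration block are the appropriate ones, and you correctly identify that the only nontrivial issue is the uniform control on $\|\bm U_{\#}^{n+1}\|_\infty$, which is supplied by the induction hypothesis together with the convergence bounds \eqref{bound:u0}--\eqref{bound:udlU} already established in Theorem~\ref{thm:err}. One cosmetic point: since $\tilde C^{n}=0$, the Gronwall step is superfluous---you get $\frac{\phi_*}{2\Delta t_c}\|\tilde C^{n+1}\|_{\rm M}^{2}\le \frac{K}{4}\|\tilde C^{n+1}\|_{\rm M}^{2}$ directly, and $\Delta t_c<2\phi_*/K$ suffices.

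The paper takes a somewhat heavier route: it invokes Browder's fixed-point theorem in its full nonlinear form for \emph{existence}---constructing continuous maps $F:\mathcal P\to\mathcal P$ (for pressure) and $G:\mathcal P\to\mathcal P$ (for the time-averaged concentration) and verifying $(F(\omega),\omega)_{\rm M}\ge 0$, $(G(\xi),\xi)_{\rm M}\ge 0$ on spheres of suitable radius---and then proves uniqueness separately by the same energy estimate you use. Your approach bypasses Browder entirely by exploiting linearity, which is more elementary and avoids the slightly artificial choice of sphere radius. The paper's approach would generalize more readily to a genuinely nonlinear coupling at each time level, but for the present decoupled linear scheme your route is the more economical one.
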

\begin{proof}
 The proof is divided into two parts.
 
\textbf{Part I: Uniqueness of the approximate solutions $\{P^{m}, \bm{U}^{m}\}$.}
First, we rewrite \eqref{scm:pu:2}--\eqref{scm:pu:3} as follows:
\begin{equation}\label{exit:1}
	U_{i+1/2,j}^{x,m} = - [a^{-1}(\mal{T}_{x}C_{p})\mal{L}^{-1}_{x}\de_{x}P]_{i+1/2,j}^{m},~
	U_{i,j+1/2}^{y,m} = - [a^{-1}(\mal{T}_{y}C_{p})\mal{L}^{-1}_{y}\de_{y}P]_{i,j+1/2}^{m}.
\end{equation}
Then, substituting \eqref{exit:1} into \eqref{scm:pu:1} and applying the operator $\mal{L}^{-1}$ to both sides of the resulting equation yields
\begin{equation}\label{exit:2}
    - \big[ \mal{L}_{x}^{-1}\de_{x}[a^{-1}(\mal{T}_{x}C_{p})\mal{L}^{-1}_{x}\de_{x}P] -  \mal{L}_{y}^{-1}\de_{y}[a^{-1}(\mal{T}_{y}C_{p})\mal{L}^{-1}_{y}\de_{y}P]\big]_{i,j}^{m} =  q_{i,j}^{m}.
\end{equation}

Now, we are ready to prove the existence of $P^{m}$ to \eqref{exit:2} using the well-known Browder's fixed point theorem \cite{Browder}. To this end, for given $ C_{p}^{m}$, let $F: \mal{P} \rightarrow \mal{P} $ be a continuous operator such that
\begin{equation*}
   [F(\omega)]_{i,j} := - \big[  \mal{L}_{x}^{-1}\de_{x}[a^{-1}(\mal{T}_{x}C_{p}^{m})\mal{L}^{-1}_{x}\de_{x}\omega] +    \mal{L}_{y}^{-1}\de_{y}[a^{-1}(\mal{T}_{y}C_{p}^{m})\mal{L}^{-1}_{y}\de_{y}\omega ] + q^{m} \big]_{i,j}.
\end{equation*}
Then, we aim to conclude that $\left( F(\omega),\omega\right)_{\rm M} \geq 0$  for $\omega \in \mal{P}$ with $\| \omega \|_{\rm M} = z_{1} $. It follows from Lemmas \ref{lem:dis}, \ref{lem:seminorm}--\ref{lem:op:L}, and the Cauchy-Schwarz inequality that
\begin{equation}\label{exit:3}
    \begin{aligned}
		\left( F(\omega),\omega\right)_{\rm M}
		& = \sum_{\kappa=x,y} \left( a^{-1}(\mal{T}_{\kappa}C_{p}^{m})\mal{L}^{-1}_{\kappa}\de_{\kappa}\omega, \mal{L}_{\kappa}^{-1}\de_{\kappa}\omega \right)_{\kappa}  - \left( q^{m}, \omega \right)_{\rm M}\\
		& \geq  \f{1}{a^{*}} \big[ \|\de_{x}\omega\|_{x}^{2} + \|\de_{y}\omega\|_{y}^{2}\big]  - \left( q^{m}, \omega\right)_{\rm M}\\
		& \geq \f{1}{a^{*}K_{3}} \left( \|\omega\|_{\rm M} - a^{*}K_{3} \| q^{m}\|_{\rm M} \right)\|\omega\|_{\rm M} \geq 0,
    \end{aligned}
\end{equation}
for $z_{1} = a^{*}K_{3}\| q^{m}\|_{\rm M} $. Thus, as a consequence of the Browder's fixed point
theorem, there exists at least one solution $\omega^{*} \in \mal{P}$ such that $F(\omega^{*})=0$, that is, the equation \eqref{exit:2} is solvable. Therefore, the existence of solutions $\{P^{m},\bm{U}^{m}\}$ to \eqref{scm:pu:1}--\eqref{scm:pu:3} is implied for $m\geq 0$.

Next, we show the uniqueness of the approximate solutions $\{P^{m}, \bm{U}^{m}\}$. Assume that there are another solutions $\{\hat{P}^{m}, \hat{\bm{U}}^{m}\}$ to \eqref{scm:pu:1}--\eqref{scm:pu:3} with \eqref{comP}. Obviously, they also satisfy equations \eqref{exit:1}--\eqref{exit:2}. Then, the errors $\gamma_{p}^{m}:=P^{m}-\hat{P}^{m}$ and $\bm{\gamma}_{u}^{m}:=(\gamma_{u^{x}}^{m},\gamma_{u^{y}}^{m})=\bm{U}^{m}-\hat{\bm{U}}^{m}$ naturally satisfy the following equations: 
\begin{align}
    & - \big[ \mal{L}_{x}^{-1}\de_{x}[a^{-1}(\mal{T}_{x}C_{p})\mal{L}^{-1}_{x}\de_{x}\gamma_{p}] -  \mal{L}_{y}^{-1}\de_{y}[a^{-1}(\mal{T}_{y}C_{p})\mal{L}^{-1}_{y}\de_{y}\gamma_{p}]\big]_{i,j}^{m} =  0, \label{gam:1}\\
    & [ \gamma_{u^{x}} + a^{-1}(\mal{T}_{x}C_{p})\mal{L}^{-1}_{x}\de_{x}\gamma_{p}]_{i+1/2,j}^{m}=0,~
	 [ \gamma_{u^{y}} + a^{-1}(\mal{T}_{y}C_{p})\mal{L}^{-1}_{y}\de_{y}\gamma_{p}]_{i,j+1/2}^{m}=0. \label{gam:2}
\end{align}

Analogous to \eqref{exit:3}, taking the inner product for equation \eqref{gam:1} with $\gamma_{p}^{m}$ yields
\begin{equation*}
     \f{1}{a^{*}K_{3}}\|\gamma_{p}^{m}\|_{\rm M}^{2} 
     \leq  \f{1}{a^{*}}|\gamma_{p}^{m}|_{1}^{2} 
     \leq 
    \sum_{\kappa=x,y} \left( a^{-1}(\mal{T}_{\kappa}C_{p})\mal{L}^{-1}_{\kappa}\de_{\kappa}\gamma_{p}^{m}, \mal{L}_{\kappa}^{-1}\de_{\kappa}\gamma_{p}^{m} \right)_{\kappa} = 0,
\end{equation*}
 which means $\|\gamma_{p}^{m}\|_{\rm M}=0$, i.e., $P^{m}=\hat{P}^{m}$. It then follows from  \eqref{gam:2} that $\bm{\gamma}_{\bm{u}}^{m}=\bm{0}$. Thus, the approximate solutions $\{P^{m}, \bm{U}^{m}\}$ are unique.

\textbf{Part II: Uniqueness of the approximate solutions $\{C^{n+1}, \bm{V}^{n+1}\}$.} 
Assume that  for some $m \geq 0$, $t_{p}^{m} < t_{c}^{n+1} \leq t_{p}^{m+1}$ and $\{P^{m}, \bm{U}^{m}\}$ are uniquely solved.
It follows from \eqref{scm:cv:2}--\eqref{scm:cv:5} that
\begin{equation}\label{exit:4}
    W^{\kappa,n+1} = \big[U_{\#}^{\kappa} \mal{T}_{\kappa} C - D \mal{L}_{\kappa}^{-1}  \de_{\kappa} C \big]^{n+1}, \quad \kappa = x,y. 
\end{equation}
Substituting \eqref{exit:4} into \eqref{scm:cv:1}, and applying the operator $\mal{L}^{-1}$ to both sides of the resulting equation, we get
\begin{equation}\label{exit:5}
    \f{2}{\De t_{c}}  \big[\phi( \ovl{C}^{n+1/2} - C^{n})\big] + \sum_{\kappa=x,y} \mal{L}_{\kappa}^{-1} \de_{\kappa} \beta^{\kappa}(\ovl{C}^{n+1/2})  - q_{P}^{n+1/2} \ovl{C}^{n+1/2} = \big[ c_{I}q_{I} \big]^{n+1/2},
\end{equation}
where 
\begin{equation*}
    \begin{aligned}
        \beta^{\kappa}(\zeta):=  U_{\#}^{\kappa,n+1} \mal{T}_{\kappa} \zeta - D \mal{L}_{\kappa}^{-1} \de_{\kappa} \zeta 
         -\f{1}{2} (U_{\#}^{\kappa,n+1}-U_{\#}^{\kappa,n}) \mal{T}_{\kappa}C^{n},~~\kappa=x,y.
    \end{aligned}
\end{equation*}

Similar to the proof in Part I, in order to show the existence of solution $\ovl{C}^{n+1/2}$ to \eqref{exit:5},  for given $\bm{U}_{\#}^{n+1}$, $\bm{U}_{\#}^{n}$, and $C^{n}$, we introduce a continuous operator  $G(\xi): \mal{P} \rightarrow \mal{P} $ such that
\begin{equation*}
	\begin{aligned}
		\big[G(\xi)\big]_{i,j}
		:= \f{2}{\De t_{c}} [ \phi( \xi - C^{n} )]_{i,j} + \sum_{\kappa=x,y} \mal{L}_{\kappa}^{-1} \de_{\kappa} \beta^{\kappa}(\xi)  - \big[  q_{P}^{n+1/2} \xi \big]_{i,j}- \big[ c_{I}q_{I} \big]_{i,j}^{n+1/2}.
	\end{aligned}
\end{equation*}
Then, taking the inner product of $G(\xi)$ with $\xi$ in the sense of $(\cdot,\cdot)_{\rm M}$ gives us
\begin{equation}\label{exit:6}
		\left( G(\xi), \xi \right)_{\rm M} 
		= \f{2}{\De t_{c}}  \left(  \phi \big[\xi - C^{n}\big], \xi \right)_{\rm M} 
	  - \left( q_{P}^{n+1/2} \xi, \xi \right)_{\rm M} - \left( [ c_{I} q_{I} ]^{n+1/2}, \xi  \right)_{\rm M} + I_{5},
\end{equation}
where $I_{5}:=  \left( \mal{L}_{x}^{-1} \de_{x} \beta^{x}(\xi) + \mal{L}_{y}^{-1} \de_{y}\beta^{y}(\xi), \xi \right)_{\rm M}$. By Lemmas \ref{lem:dis} and \ref{lem:op:L}--\ref{lem:op:t}, the Cauchy-Schwarz inequality and the boundedness result in \eqref{bound:udlU}, it can be seen that $I_{5}$ can be bounded as
\begin{equation}\label{exit:7}
	\begin{aligned}
		I_{5}
        &=  \sum_{\kappa=x,y}  \left(   D \mal{L}_{\kappa}^{-1} \de_{\kappa} \xi -U_{\#}^{\kappa,n+1} \mal{T}_{\kappa} \xi  
         +\f{1}{2} (U_{\#}^{\kappa,n+1}-U_{\#}^{\kappa,n}) \mal{T}_{\kappa}C^{n} , \mal{L}_{\kappa}^{-1}\de_{\kappa} \xi \right)_{\kappa} \\
        & \geq \sum_{\kappa=x,y} \| \mal{L}_{\kappa}^{-1} \de_{\kappa} \xi\|_{\kappa} 
             \left( \alpha_{m} \phi_{*} \| \mal{L}_{\kappa}^{-1} \de_{\kappa} \xi\|_{\kappa} -   \| U_{\#}^{\kappa,n+1} \mal{T}_{\kappa} \xi \|_{\kappa} +  \f{1}{2} \| (U_{\#}^{\kappa,n+1}-U_{\#}^{\kappa,n}) \mal{T}_{\kappa}C^{n} \|_{\kappa} \right)\\
		& \geq \f{\alpha_{m} \phi_{*}}{2} | \xi |_{1}^{2} - K_{4} \left(\| \xi \|_{\rm M}^{2} + \| C^{n}\|_{\rm M}^{2}\right),
	\end{aligned}
\end{equation}
where the constant $K_{4}:=\f{225 (K_{1}+1)^{2} }{8\alpha_{m} \phi_{*}}$.

Now, substituting \eqref{exit:7} into \eqref{exit:6}, and using Lemma \ref{lem:op:L} and the Cauchy-Schwarz inequality, we obtain
\begin{equation*}
    \begin{aligned}
		&\left( G(\xi), \xi \right)_{\rm M} \\
		& \geq \f{1}{\De t_{c}} \left( \phi_{*}\| \xi \|_{\rm M}^{2} - \phi^{*} \| C^{n} \|_{\rm M}^{2} \right) - \left( K_{2} + K_{4} + 1/4 \right)\| \xi\|_{\rm M}^{2}  - K_{4} \| C^{n}\|_{\rm M}^{2}  - \| [ c_{I} q_{I} ]^{n+1/2} \|_{\rm M}^{2}  \\
        & = \f{1}{\De t_{c}} \left( \big(\phi_{*} - \De t_{c}[K_{2} + K_{4} + 1] \big)\| \xi \|_{\rm M}^{2}  - (\phi^{*} +\De t_{c} K_{4} ) \| C^{n} \|_{\rm M}^{2} - \De t_{c} \| [ c_{I} q_{I} ]^{n+1/2} \|_{\rm M}^{2} \right)    \\
		& \geq \f{1}{2\De t_{c}} \left( \phi_{*} \| \xi \|_{\rm M}^{2}  - \left( 2\phi^{*} + \phi_{*} \right) \|  C^{n}\|_{\rm M}^{2} - 4\phi_{*}\| [ c_{I} q_{I} ]^{n+1/2} \|_{\rm M}^{2}\right) \ge 0,
    \end{aligned}
\end{equation*}
for sufficiently small $\De t_{c} \leq \tau_{*} := \phi_{*}/(2(K_{2}+K_{4}+1/4))$ and $\| \xi \|_{\rm M}^{2}=\left( 2 \phi^{*}/\phi_{*} + 1\right)\|C^{n}\|_{\rm M}^{2} +  4\| [ c_{I} q_{I} ]^{n+1/2} \|_{\rm M}^{2}$. Thus, the Browder's fixed point theorem shows that there exists at least one solution $\xi^{*} \in \mal{P}$ such that $G(\xi^{*})=0$, which means that the equation \eqref{exit:5} is solvable, and thus, $C^{n+1}=2\ovl{C}^{n+1/2}-C^{n}$ exists for small $\De t_{c}$. Moreover, the explicit formulas \eqref{scm:cv:2}--\eqref{scm:cv:3} imply $\bm{V}^{n+1}$ also exists. Therefore, it follows that the solutions $\{C^{n+1},\bm{V}^{n+1}\}$ to \eqref{scm:cv:1}--\eqref{scm:cv:5} also exist.

Finally, analogous to the uniqueness proof of $\{P^{m},\bm{U}^{m}\}$, the uniqueness of solutions $\{C^{n+1},\bm{V}^{n+1}\}$ can be derived in a similar way. 
\end{proof}

\section{Numerical experiments}\label{sec:nx}
In this section, we present several numerical examples to demonstrate the accuracy, efficiency, and reliability of the proposed scheme. Moreover, as pointed out in Theorem \ref{thm:mass} that the developed MC-MTS-CBCFD scheme \eqref{scm:pu:1}--\eqref{comP} is mass-conserving in the discrete setting. Therefore,  we also test the mass error measured by
\begin{equation*}
	\mal{E}^{n}  := \sum_{i=1}^{N_{x}} \sum_{j=1}^{N_{y}} h_1 h_2 \mal{L} \,\Big(  \phi C_{i,j}^{n} -  \phi C_{i,j}^{0} - \De t_{c} \sum_{\ell=0}^{n-1} \big[q_{P} \ovl{C} + c_{I}q_{I} \big]_{i,j}^{\ell+1/2}  \Big).
\end{equation*}
\begin{table}[!htbp]\small
   \setlength{\abovecaptionskip}{0.05cm} 
\centering
\caption{Errors and convergence orders with  $N_{c} = N_{x}^{2}= N_{y}^{2}$ and $Q=1$ at $T=1$ for Example \ref{exm:e1}.}
\label{tab:e1:1}
\setlength{\tabcolsep}{2.5mm}
\begin{tabular}{ccccccccc}
\toprule
{$N_{x}$} & $\|e_{c}\|_{\rm M}$ & Order & $\|e_{p}\|_{\rm M}$ & Order & $\|\bm{e}_{\bm{u}}\|_{\rm T}$ & Order & $|e_{p}|_{1}$ & Order  \\
\midrule
20  & 3.86e-05 & ---  & 1.02e-05 & ---  & 1.62e-05 & ---  & 9.01e-05 & ---  \\
30  & 7.62e-06 & 4.00 & 2.01e-06 & 4.01 & 3.28e-06 & 4.00 & 1.78e-05 & 4.00 \\
40  & 2.41e-06 & 4.00 & 6.36e-07 & 4.00 & 1.01e-06 & 4.01 & 5.65e-06 & 3.99 \\
50  & 9.88e-07 & 4.00 & 2.60e-07 & 4.01 & 4.14e-07 & 4.00 & 2.31e-06 & 4.01 \\
60  & 4.76e-07 & 4.01 & 1.26e-07 & 3.97 & 2.08e-07 & 3.99 & 1.12e-06 & 3.97 \\
\bottomrule
\end{tabular}
\end{table}
\begin{table}[!htbp]\small
   \setlength{\abovecaptionskip}{0.05cm} 
\centering
\caption{Errors and convergence orders with $N_{c} = N_{x}^{2}= N_{y}^{2}$ and $Q=10$ at $T=1$ for Example \ref{exm:e1}.}
\label{tab:e1:10}
\setlength{\tabcolsep}{2.5mm}
\begin{tabular}{ccccccccc}
\toprule
{$N_{x}$} & $\|e_{c}\|_{\rm M}$ & Order & $\|e_{p}\|_{\rm M}$ & Order & $\|\bm{e}_{\bm{u}}\|_{\rm T}$ & Order & $|e_{p}|_{1}$ & Order \\
\midrule
20  & 3.90e-05 & ---  & 1.02e-05 & ---  & 1.62e-05 & ---  & 9.01e-05 & ---  \\
30  & 7.71e-06 & 4.00 & 2.01e-06 & 4.01 & 3.29e-06 & 4.00 & 1.78e-05 & 4.00 \\
40  & 2.43e-06 & 4.01 & 6.36e-07 & 4.00 & 1.01e-06 & 4.01 & 5.65e-06 & 3.99 \\
50  & 9.94e-07 & 4.01 & 2.61e-07 & 3.99 & 4.14e-07 & 4.00 & 2.31e-06 & 4.01 \\
60  & 4.78e-07 & 4.02 & 1.26e-07 & 3.99 & 2.09e-07 & 3.99 & 1.12e-06 & 3.97 \\
\bottomrule
\end{tabular}
\end{table}
\begin{table}[!htbp]\small
   \setlength{\abovecaptionskip}{0.05cm} 
\centering
\caption{Errors and convergence orders with  $N_{c} = N_{x}^{2}= N_{y}^{2}$ and $Q=20$ at $T=1$ for Example \ref{exm:e1}.}
\label{tab:e1:20}
\setlength{\tabcolsep}{2.5mm}
\begin{tabular}{ccccccccc}
\toprule
{$N_{x}$} & $\|e_{c}\|_{\rm M}$ & Order & $\|e_{p}\|_{\rm M}$ & Order & $\|\bm{e}_{\bm{u}}\|_{\rm T}$ & Order & $|e_{p}|_{1}$ & Order \\
\midrule
20  & 5.25e-05 & ---  & 1.02e-05 & ---  & 1.63e-05 & ---  & 9.01e-05 & ---  \\
30  & 9.99e-06 & 4.09 & 2.01e-06 & 4.01 & 3.21e-06 & 4.01 & 1.78e-05 & 4.00 \\
40  & 3.07e-06 & 4.10 & 6.36e-07 & 4.00 & 1.01e-06 & 4.01 & 5.65e-06 & 3.99 \\
50  & 1.22e-06 & 4.14 & 2.61e-07 & 3.99 & 4.15e-07 & 3.99 & 2.31e-06 & 4.01 \\
60  & 5.72e-07 & 4.15 & 1.26e-07 & 3.99 & 2.09e-07 & 4.00 & 1.12e-06 & 3.97 \\
\bottomrule
\end{tabular}
\end{table}
\subsection{Accuracy and efficiency tests}
\begin{example}\label{exm:e1}  For the first example, the spatial domain is selected as $\Omega=(0,1)^2$. we consider model \eqref{mod:PDE} with the following manufactured exact solutions
 \[
  c(\bm{x},t) = \sin(5\pi t/2+\pi/4)\cos(2\pi x)\cos(2 \pi y), ~~
  p(\bm{x},t) = \sin(\pi t/2+\pi/4) \sin(2\pi x) \sin(2\pi y),
  \]
  \[
  \bm{u}(\bm{x},t) = \sin(\pi t/2+\pi/4)\,( \sin(2\pi x) \cos(2\pi y),~  \cos(2\pi x) \sin(2\pi y))^\top,
 \]
 where the parameters in model \eqref{mod:PDE} are chosen as follows:
 \[
    \phi(\bm{x}) = (\cos(2\pi(x+y)) + 2 )/4,~~ \mu(c) = 1 + c^{2},~~ k(\bm{x}) = ( \sin( 2 \pi (x+y) ) + 2 )^2,
 \]
 \[
     q_{P}(\bm{x}) =   \sin(2\pi (x + y + t ) ) - 2,~~ \alpha_{m}(\bm{x}) = \sin(2\pi(x+y)) + 2,~~ D(\bm{x}) = \phi(\bm{x}) \alpha_{m}(\bm{x}).
 \]
\end{example}
\begin{table}[!htbp]\small
   \setlength{\abovecaptionskip}{0.05cm} 
\centering
\caption{Comparisons of CPU time  with  $N_{c} = N_{x}^{2}= N_{y}^{2}$ and $N_{p}=N_{c}/Q$ at $T=1$ for Example \ref{exm:e1}.}
\label{tab:e1:cpu1}
\setlength{\tabcolsep}{6.5mm}
\begin{tabular}{cccccc}
\toprule
$N_{x}$ & $N_{c}$ &$Q=1$ & $Q=10$ & $Q=20$   \\
\midrule
40  & 1,600 & 79~s~(135~s)    & 8~s~(133~s)     & 4~s~(134~s)       \\
50  & 2,500 & 201~s~(358~s)  & 20~s~(353~s)    & 10~s~(353~s)    \\
60  & 3,600 & 461~s~(878~s)  & 46~s~(877~s)    & 23~s~(876~s)    \\
\bottomrule
\end{tabular}
\end{table}
\begin{table}[htbp]\small
   \setlength{\abovecaptionskip}{0.05cm} 
\centering
\caption{Comparisons of CPU time with  $\De t_{c} =0.005$,  $N_{x}= N_{y}=50$ for Example \ref{exm:e1}.}
\label{tab:e1:cpu2}
\setlength{\tabcolsep}{4.5mm}
\begin{tabular}{cccccc}
\toprule
$T$ & $Q$ & $\|e_{c}\|_{\rm M}$ & $\|e_{p}\|_{\rm M}$ & $\|\bm{e}_{\bm{u}}\|_{\rm T}$  & CPU time   \\
\midrule
    &1    & 6.81e-05 & 7.36e-07 & 7.50e-06 & 159~s~(361~s) \\
10  &10   & 7.44e-05 & 9.24e-07 & 7.39e-06 & 16~s~(360~s) \\
    &100  & 6.54e-05 & 7.36e-07 & 7.07e-06 & 2~s~(360~s) \\
\midrule
    &1    & 6.90e-05 & 7.35e-07 & 7.60e-06 & 1,691~s~(3,634~s) \\
100 &10   & 7.73e-05 & 5.69e-07 & 7.99e-06 & 156~s~(3,484~s) \\
    &100  & 7.39e-05 & 7.30e-07 & 7.71e-06 & 15~s~(3,405~s) \\
\midrule
     &1    & 6.90e-05 & 7.35e-07 & 7.60e-06 & 16,393~s~(36,357~s) \\
1000 &10   & 7.73e-05 & 5.69e-07 & 7.99e-06 & 1,608~s~(36,093~s) \\
     &100  & 7.39e-05 & 7.30e-07 & 7.71e-06 & 161~s~(35,870~s) \\
\bottomrule
\end{tabular}
\end{table}

The goal of this example is three-fold: accuracy test, efficiency comparisons of multi-time-step strategy, and mass conservation verification. First, we test the numerical spatial and temporal accuracy by setting $N_{c}=N_{x}^{2}=N_{y}^{2}$, in which the discrete $ L^{2}$ and $H^{1}$ errors are measured at $T=1$. Tables \ref{tab:e1:1}--\ref{tab:e1:20} present the errors and convergence orders for the concentration $c$, the pressure $p$ and velocity $\bm{u}$ for different $Q$, utilizing the MC-MTS-CBCFD scheme. Next, we show the CPU times cost by the velocity/pressure equation as well as the concentration equation with respect to $Q$ in Table \ref{tab:e1:cpu1}, where the parenthesized values indicate the running CPU time of the concentration equation. In addition, to further show the advantages of the proposed algorithm in long-term simulation, we set $N_{x}=N_{y}=50$ and $\De t_{c} = 0.05$, and list the numerical errors and consumed CPU time at $T=10$, $100$ and $1000$ for different $Q$ in Table \ref{tab:e1:cpu2}. Finally, the evolutions of mass errors with $N_{c}=N_{x}^{2}=N_{y}^{2}=60^{2}$ and $N_{x}=N_{y}=50$, $\De t_{c} = 0.005$ for different $Q$ are depicted Figures \ref{fig:e1:mass} and \ref{fig:e1:mass:L}, respectively. Basically, the key observations are summarized as follows:
\begin{figure}[!htbp]
	\centering
	\subfigure[$Q=1$]{
		\includegraphics[width=0.3\textwidth]{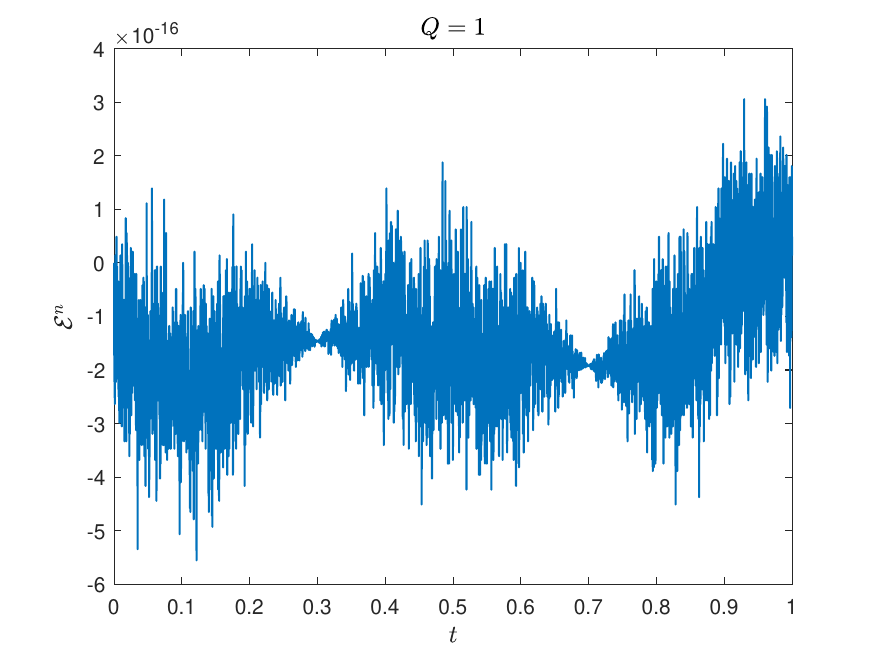}
		\label{fig:e1:mass:1}
	}
	\subfigure[$Q=10$]{
		\includegraphics[width=0.3\textwidth]{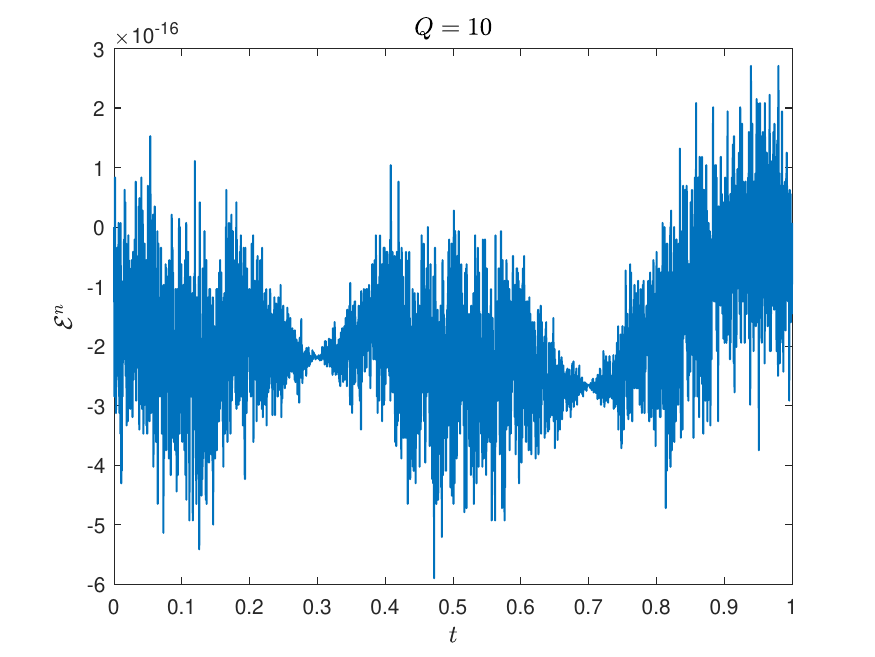}
		\label{fig:e1:mass:10}
	}
	\subfigure[$Q=20$]{
		\includegraphics[width=0.3\textwidth]{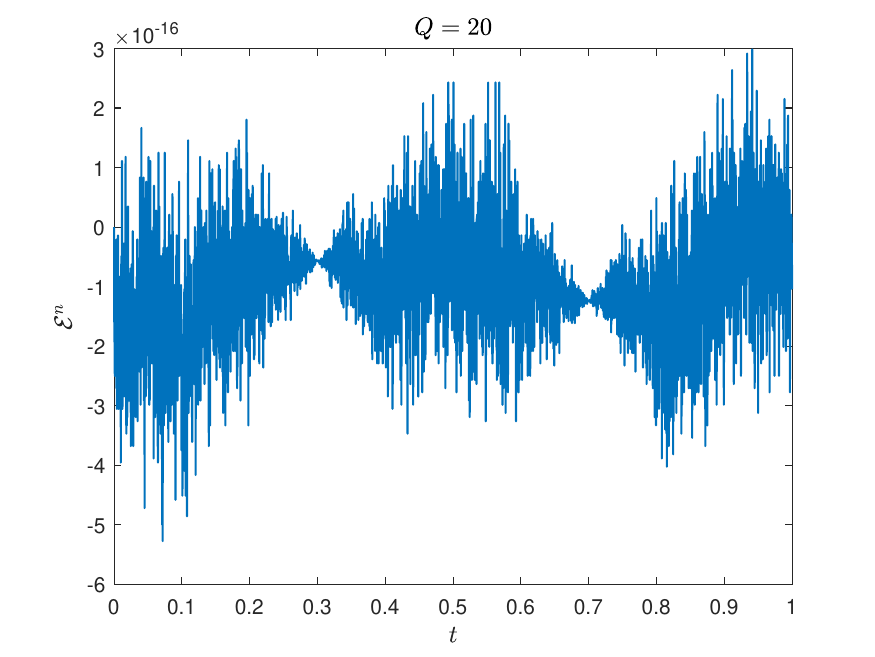}
		\label{fig:e1:mass:20}
	}
	\caption{The mass errors with $N_{c}=N_{x}^{2}=N_{y}^{2}=60^{2}$ and different $Q$ for Example \ref{exm:e1}.}
	\label{fig:e1:mass}
   \setlength{\belowcaptionskip}{0.05cm}  
\end{figure}
\begin{figure}[!htbp]
	\centering
	\subfigure[$Q=1$]{
		\includegraphics[width=0.3\textwidth]{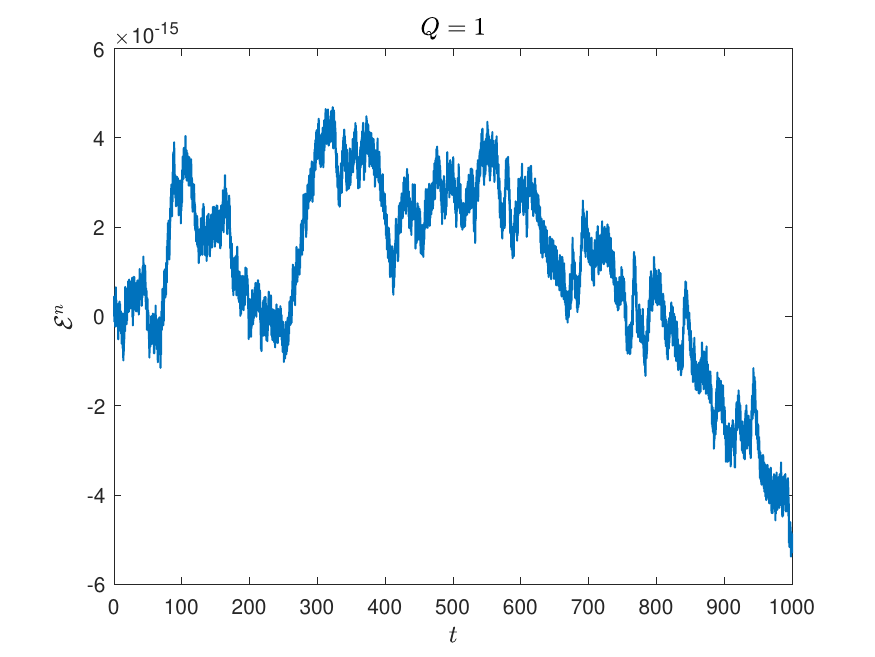}
		\label{fig:e1:mass:L:1}
	}
	\subfigure[$Q=10$]{
		\includegraphics[width=0.3\textwidth]{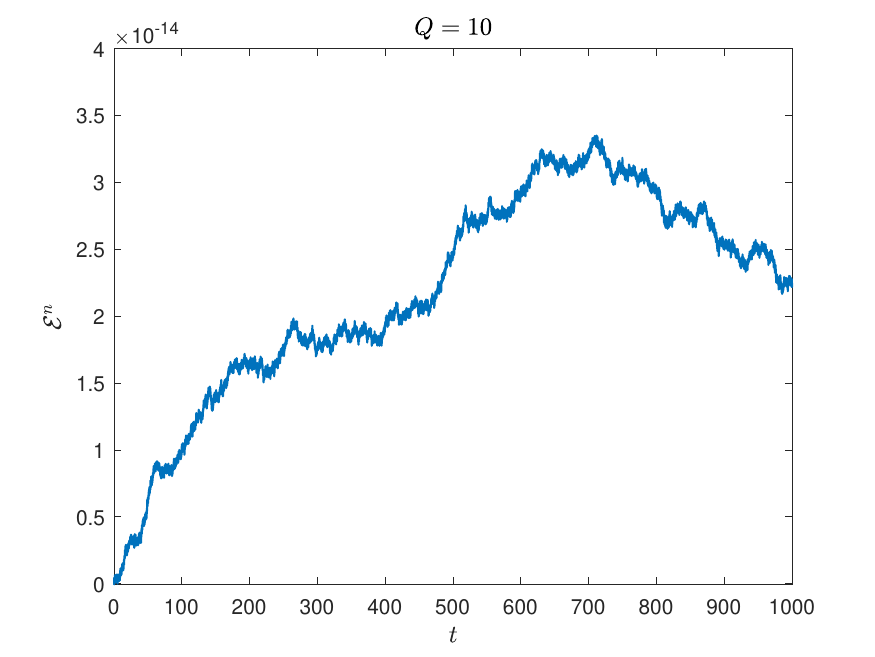}
		\label{fig:e1:mass:L:10}
	}
	\subfigure[$Q=100$]{
		\includegraphics[width=0.3\textwidth]{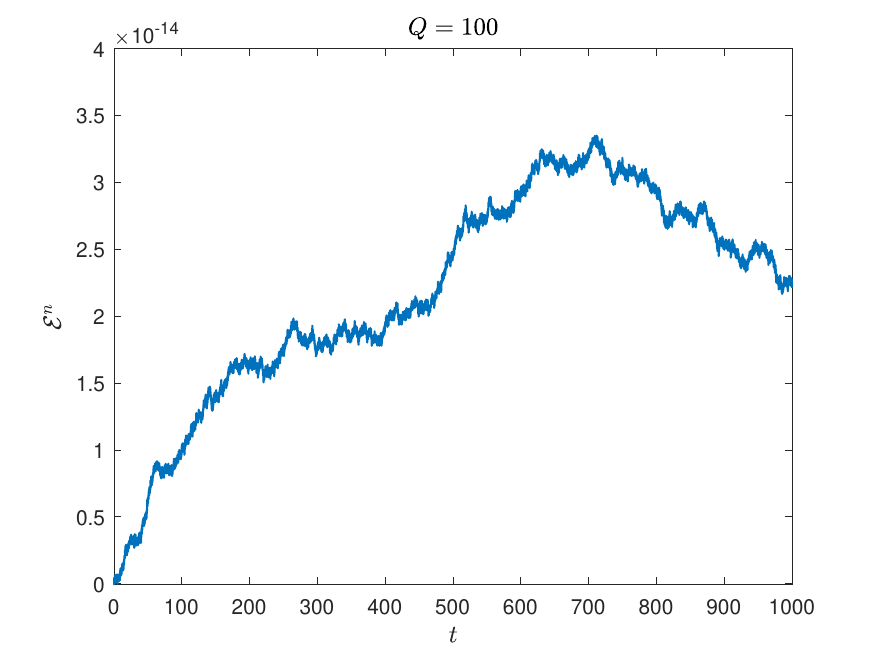}
		\label{fig:e1:mass:L:100}
	}
	\caption{The mass errors with $N_{x}=N_{y}=50$, $\De t_{c} = 0.005$ and different $Q$ for Example \ref{exm:e1}.}
	\label{fig:e1:mass:L}
   \setlength{\belowcaptionskip}{0.05cm}  
\end{figure}

\begin{itemize}
    \item[(i)]  
    The proposed MC-MTS-CBCFD scheme can reach second-order temporal accuracy and fourth-order spatial accuracy for the concentration $c$, pressure $p$ and velocity $\bm{u}$. This is totally consistent with the theoretical conclusion established in Theorem \ref{thm:err}, which shows the strong merits of the developed method that can simultaneously approximate all variables with the same fourth-order accuracy. Thus, the proposed method can avoid the reduction in convergence order that occurs in traditional direct FD methods to approximate the velocity $\bm{u}$ by differentiation of the pressure $p$.
    
    \item[(ii)]  The implementation of the multi-time-step strategy demonstrates a remarkable improvement in computational efficiency, as clearly shown in Tables \ref{tab:e1:cpu1}--\ref{tab:e1:cpu2}. Specifically, the proposed algorithm achieves a substantial reduction in computational cost for the velocity/pressure system, requiring only $1/Q$ of the CPU time compared to the conventional single-time-step approach (where $Q=1$). Thus, the proposed multi-time-step algorithm is particular suitable for long-term simulation of the incompressible miscible displacement model.

    \item[(iii)] It is worth noting that the multi-time-step algorithm can not only improve the computational efficiency, but also generate the same magnitude computational errors as the single-time-step algorithm, see Tables \ref{tab:e1:1}--\ref{tab:e1:20}. This favorable property remains valid in long-term simulations, see Table \ref{tab:e1:cpu2}, where nearly identical errors are produced with $Q=10$, $Q=100$ and the single-time-step algorithm ($Q=1$) at $T=1000$. This again shows strong reliability of the MC-MTS-CBCFD scheme.

    \item[(iv)] The proposed method is indeed mass-conserving. As seen from Figure \ref{fig:e1:mass}, the mass errors almost reach the machine precision for both the single-time-step approach (i.e., $Q=1$) and the multi-time-step approach (i.e., $Q=10$, $Q=20$). Furthermore, Figure \ref{fig:e1:mass:L} illustrates that the proposed method can still maintain the mass conservation even for long-term simulations.
\end{itemize}

\begin{example}\label{exm:e2} 
In this example, we consider a generalization of the proposed MC-MTS-CBCFD algorithm applied to the incompressible miscible displacement model \eqref{mod:PDE} under no-flow/homogeneous Neumann boundary conditions \eqref{mod:bc}.
As discussed in \cite{LBY13,LWZ25} that the boundary conditions \eqref{mod:bc} imply that $\bm{v}\cdot \bm{n}= \nabla c \cdot \bm{n} = 0$ on $\p \Omega \times [0,T]$. In this case, we have to slightly modify the fourth-order difference operator and interpolation operators defined in \eqref{op:L}--\eqref{op:h} near the boundary as follows:
    \begin{align*}
       [ \hat{\mal{L}}_{x} \omega ]_{i,j} &:= \frac{1}{24}\left\{
         \begin{aligned}
           &26\omega_{1,j} - 5\omega_{2,j} + 4\omega_{3,j} - \omega_{4,j}, & i = 1, \\
           &24[\mal{L}_{x} \omega]_{i,j}, & 2\leq i \leq N_{x}-1, \\
           &26\omega_{N_{x},j} - 5\omega_{N_{x}-1,j} + 4\omega_{N_{x}-2,j} - \omega_{N_{x}-3,j}, & i = N_{x},
        \end{aligned} \right.\\
      [\hat{\mal{T}}_{x} \omega ]_{i+1/2} &:=\frac{1}{16} \left\{
         \begin{aligned}
         &35\omega_{1} - 35 \omega_{2} + 21 \omega_{3}  -5 \omega_{4}, & i = 0, \\
           &5\omega_{1} + 15 \omega_{2} - 5\omega_{3} + \omega_{4}, & i = 1, \\
           &16[ \mal{T}_{x}\omega ] _{i+1/2}, & 2\leq i \leq N_{x}-2, \\
           &5\omega_{N_{x}} + 15\omega_{N_{x}-1} - 5\omega_{N_{x}-2} + \omega_{N_{x}-3}, & i = N_{x}-1,\\
           &35\omega_{N_{x}} - 35\omega_{N_{x}-1} + 21\omega_{N_{x}-2} - 5\omega_{N_{x}-3}, & i = N_{x},
        \end{aligned} \right.\\
        [\hat{\mal{T}}_{x}^{*} \omega ]_{i} &:= \frac{1}{16} \left\{
         \begin{aligned}
           &5\omega_{1/2} + 15 \omega_{3/2} -  5\omega_{5/2} +\omega_{7/2}, & i = 1, \\
           &16[ \mal{T}_{x}^{*}\omega ] _{i}, & 2\leq i \leq N_{x}-1, \\
           &5\omega_{N_{x}+1/2} + 15\omega_{N_{x}-1/2}  - 5\omega_{N_{x}-3/2} + \omega_{N_{x}-5/2}, & i = N_{x}.
        \end{aligned} \right.
    \end{align*}
 Besides, the operators $\hat{\mal{L}}_{y} $, $\hat{\mal{T}}_{y}$ and $\hat{\mal{T}}_{y}^{*}$ are defined similarly. For simplicity, we denote $\hat{\mal{L}}:=\hat{\mal{L}}_{x}\hat{\mal{L}}_{y}$, $\hat{\mal{H}}_{x} := \hat{\mal{T}}_{y}^{*}  \hat{\mal{T}}_{x}  =   \hat{\mal{T}}_{x} \hat{\mal{T}}_{y}^{*}$ and $\hat{\mal{H}}_{y} := \hat{\mal{T}}_{x}^{*}  \hat{\mal{T}}_{y}  =   \hat{\mal{T}}_{y} \hat{\mal{T}}_{x}^{*}$.
 \end{example}
    
Define $ \Pi_{x}^{\circ}:= \{x_{i+1/2}\}_{i=1}^{N_{x}-1}$ and $\Pi_{y}^{\circ}:= \{y_{j+1/2}\}_{j=1}^{N_{y}-1}$. We take Step 3 of the MC-MTS-CBCFD scheme as an example to illustrate the new scheme under no-flow boundary conditions: Find $\{C^{n+1}, V^{x,n+1}, V^{y,n+1}, W^{x,n+1}, W^{y,n+1}\} \in \mal{P}^{\circ} \times \mal{U}^{\circ} \times \mal{V}^{\circ}  \times \mal{U}^{\circ} \times \mal{V}^{\circ}$ such that
       \begin{align*}
            &\Big[\hat{\mal{L}}[\phi d_{t} C] + \hat{\mal{L}}_{y} \de_{x} \ovl{W}^{x} + \hat{\mal{L}}_{x} \de_{y} \ovl{W}^{y} - \hat{\mal{L}} [q_{P} \ovl{C}] \Big]_{i,j}^{n+1/2} =\hat{\mal{L}} [c_{I}q_{I}]_{i,j}^{n+1/2}, & \text{on}~ \Pi_{x}^{*}\times \Pi_{y}^{*}, \\
            &\Big[ \de_{x} C + \mal{L}_{x} V^{x} \Big]_{i+1/2,j}^{n+1} = 0, &\text{on}~ \Pi_{x}^{\circ}\times \Pi_{y}^{*},  \\
            &\Big[ \de_{y} C + \mal{L}_{y} V^{y} \Big]_{i,j+1/2}^{n+1} = 0,&\text{on}~ \Pi_{x}^{*}\times \Pi_{y}^{\circ}, \\
            &\Big[ W^{x} - U_{\#}^{x} \hat{\mal{T}}_{x} C - \bm{D}^{x}( U_{\#}^{x}, \hat{\mal{H}}_{x} U_{\#}^{y}) \cdot \big( V^{x}, 
                \hat{\mal{H}}_{x} V^{y} \big) \Big]_{i+1/2,j}^{n+1} = 0, 
             &\text{on}~ \Pi_{x}^{\circ}\times \Pi_{y}^{*}, \\
            &\Big[ W^{y} - U_{\#}^{y} \hat{\mal{T}}_{y} C - \bm{D}^{y}( \hat{\mal{H}}_{y} U_{\#}^{x},  U_{\#}^{y}) \cdot \big( \hat{\mal{H}}_{y} V^{x},  V^{y}  \big)\Big]_{i,j+1/2}^{n+1} = 0,
              &\text{on}~ \Pi_{x}^{*}\times \Pi_{y}^{\circ}, 
    \end{align*}
    where 
    \begin{equation*}
        \begin{aligned}
            & \mal{P}^{\circ}:= \left\lbrace g_{i,j} ~ | ~ (x_{i},y_{j})\in \Pi_{x}^{*}\times \Pi_{y}^{*}\right\rbrace,\\
            & \mal{U}^{\circ}:= \left\lbrace g_{i+1/2,j} ~ | ~   (x_{i+1/2},y_{j})\in \Pi_{x}\times \Pi_{y}^{*},~ g_{1/2,j}=g_{N_{x}+1/2,j}=0 \right\rbrace, \\
            & \mal{V}^{\circ}:= \left\lbrace g_{i,j+1/2} ~|~   (x_{i},y_{j+1/2})\in \Pi_{x}^{*}\times \Pi_{y},~ g_{i,1/2}= g_{i,N_{y}+1/2}=0~  \right\rbrace.
        \end{aligned}
    \end{equation*}

In this example, we choose the spatial domain $\Omega= (0,1)^2$, $T=1$, the manufactured exact solutions
 \[
  c(\bm{x},t) = 2e^{t}( x^2(x-1)^2 + y^2(y-1)^2),\quad
   p(\bm{x},t) = t^3\sin(\pi x)\sin(\pi y),
\]
\[
   \bm{u}(\bm{x},t) = t^3(x(x-1)(2x-1),~y(y-1)(2y-1))^{\top},
\]
and parameters
\[
     \phi(\bm{x}) = (x+y+1)^2/10,\quad
     k(\bm{x}) = (x+y+1)^3,\quad
     \mu(c) = 1 + c^2,\quad
\]
\[
    q_P(\bm{x},t) = \cos(2\pi(x+y+t)) -2,\quad
    \bm{D}(\bm{x},\bm{u}) = \phi(\bm{x}) ( 0.1\bm{I} + \bm{u} \bm{u}^{\top} ).
\]

\begin{table}[!htbp] \small
   \setlength{\abovecaptionskip}{0.05cm} 
\centering
\caption{Errors and convergence orders with  $N_{c} = N_{x}^{2}= N_{y}^{2}$ and $Q=1$ for Example \ref{exm:e2}.}
\label{tab:e2:1}
\setlength{\tabcolsep}{2.5mm}
\begin{tabular}{ccccccccc}
\toprule
{$N_{x}$} & $\|e_{c}\|_{\rm M}$ & Order  & $\|e_{p}\|_{\rm M}$ & Order & $\|\bm{e}_{\bm{u}}\|_{\rm T}$ & Order & $|e_{p}|_{1}$ & Order \\
\midrule
10  & 7.75e-04 & ---  & 2.00e-05 & ---  & 9.61e-06 & ---  & 6.56e-05 & ---\\
20  & 3.86e-05 & 4.33 & 1.18e-06 & 4.08 & 6.06e-07 & 3.99 & 4.67e-06 & 3.81\\
30  & 8.60e-06 & 3.70 & 2.20e-07 & 4.14 & 1.19e-07 & 4.01 & 9.60e-07 & 3.90\\
40  & 2.59e-06 & 4.17 & 6.70e-08 & 4.13 & 3.48e-08 & 4.27 & 3.09e-07 & 3.94\\
50  & 9.63e-07 & 4.43 & 2.68e-08 & 4.11 & 1.31e-08 & 4.38 & 1.28e-07 & 3.95\\
\bottomrule
\end{tabular}
\end{table} 
\begin{table}[!htbp] \small
   \setlength{\abovecaptionskip}{0.05cm} 
\centering
\caption{Errors and convergence orders with  $N_{c} = N_{x}^{2}= N_{y}^{2}$ and $Q=10$ for Example \ref{exm:e2}.}
\label{tab:e2:10}
\setlength{\tabcolsep}{2.5mm}
\begin{tabular}{ccccccccc}
\toprule
{$N_{x}$} & $\|e_{c}\|_{\rm M}$ & Order & $\|e_{p}\|_{\rm M}$ & Order & $\|\bm{e}_{\bm{u}}\|_{\rm T}$ & Order & $|e_{p}|_{1}$ & Order \\
\midrule
10  & 1.39e-03 & ---  & 1.53e-05 & ---  & 2.11e-05 & ---  & 6.56e-05 & --- \\
20  & 9.68e-05 & 3.84 & 9.23e-07 & 4.05 & 1.22e-06 & 4.11 & 4.67e-06 & 3.81\\
30  & 2.02e-05 & 3.86 & 1.72e-07 & 4.14 & 2.34e-07 & 4.07 & 9.60e-07 & 3.90\\
40  & 6.40e-06 & 4.00 & 5.22e-08 & 4.14 & 7.22e-08 & 4.09 & 3.09e-07 & 3.94\\
50  & 2.58e-06 & 4.07 & 2.08e-08 & 4.12 & 2.89e-08 & 4.10 & 1.28e-07 & 3.95\\
\bottomrule
\end{tabular}
\end{table}
\begin{table}[!htbp] \small
   \setlength{\abovecaptionskip}{0.05cm} 
\centering
\caption{Errors and convergence orders with  $N_{c} = N_{x}^{2}= N_{y}^{2}$ and $Q=20$ for Example \ref{exm:e2}.}
\label{tab:e2:20}
\setlength{\tabcolsep}{2.5mm}
\begin{tabular}{ccccccccc}
\toprule
{$N_{x}$} & $\|e_{c}\|_{\rm M}$ & Order & $\|e_{p}\|_{\rm M}$ & Order & $\|\bm{e}_{\bm{u}}\|_{\rm T}$ & Order & $|e_{p}|_{1}$ & Order \\
\midrule
10  & 5.60e-03 & ---  & 1.36e-05 & ---  & 6.81e-05 & ---  & 6.56e-05 & --- \\
20  & 3.40e-04 & 4.04 & 7.32e-07 & 4.22 & 3.76e-06 & 4.18 & 4.67e-06 & 3.81\\
30  & 6.66e-05 & 4.02 & 1.43e-07 & 4.03 & 7.09e-07 & 4.11 & 9.60e-07 & 3.90\\
40  & 2.09e-05 & 4.03 & 4.53e-08 & 4.00 & 2.20e-07 & 4.07 & 3.09e-07 & 3.94\\
50  & 8.53e-06 & 4.02 & 1.87e-08 & 3.97 & 8.91e-08 & 4.05 & 1.28e-07 & 3.95\\
\bottomrule
\end{tabular}
\end{table}
The errors and convergence orders for problem with nonlinear velocity-dependent diffusion-dispersion coefficient are presented in Tables \ref{tab:e2:1}--\ref{tab:e2:20}. We can see that the convergence orders for the approximations of $c$, $p$ and $\bm{u}$ are indeed second-order in time and fourth-order in space. This indicates that our algorithm is also applicable to the model problem with no-flow/homogeneous Neumann boundary conditions and general nonlinear diffusion coefficients.

\subsection{'Real' simulations}
In this subsection, we simulate various incompressible displacement problems in a water-oil system within porous media to systematically assess the numerical performance of the MC-MTS-CBCFD scheme \eqref{scm:pu:1}--\eqref{comP}. We consider one quadrant of a standard five-spot well pattern for the horizontal reservoir of unit thickness with no-flow/homogeneous Neumann boundary conditions, where the injection and production wells are located at opposite corners \cite{WH00,CD07,VPV21,Eymard19}. The spatial domain is $\Omega = (0, 1000)^2$ $\text{ft}^2$, the time period is $[0, T] = [0, 3600]$ days, and the viscosity of the oil is $\mu_0 = 1.0$ cp. The injection well is located at the upper-right corner $(1000,1000)$ of the domain with an injection rate $q_{I} = 30$ $\text{ft}^2/\text{day}$, and the production well is located at the lower-left corner $(0,0)$ with a production rate $q_{P} = - 30$ $\text{ft}^2/\text{day}$. The initial concentration and injection concentration of invading fluid are $c^{o} = 0$ and $c_{I}=1$, respectively. In all simulations, we use a fairly coarse spatial grids with $h^{x}=h^{y}=20$ ft, and take $\De t_{c} = 10$ days, $\De t_{p}=30$ days. 

As shown in \eqref{mod:mu}, the viscosity behavior is primarily governed by the mobility ratio $\mal M$. The bigger $\mal M > 1$, the more mobile invading water may make multiple unstable channels into the oil and tend to form long fingers that grow in length toward the production wells. This phenomenon is called viscous fingering \cite{Ewing83}. Once the channel composed of water extends from the injection well to the production well, the production well will mainly produce water, thereby reducing oil recovery. Meanwhile, dispersion and heterogeneity are also controlling factors in the presence of viscous fingering phenomenon \cite{Ewing83}.
Therefore, four distinct examples with different mobility ratio, dispersion and permeability are tested to observe the fingering phenomenon and displacement efficiency.

\begin{example}[\cite{WH00,CD07,VPV21,Eymard19}]\label{exm:e3}
    In this example, we consider numerical simulation of homogeneous and isotropic porous media with the following parameter configuration:
    \[
        \phi(\bm{x}) = 0.1,\quad k(\bm{x}) = 80~ {\rm md}, \quad \alpha_{m} = 10~ {\rm ft}^{2}/{\rm day}, \quad \alpha_{l} = \alpha_{t} = 0~ {\rm ft}/{\rm day}, \quad \mal M=1.
    \]
\end{example}

Surface and contour plots of the invading fluid (water) concentration at $t = 3$ and $10$ years are presented in Figure \ref{fig:e3}. As shown in Figure \ref{fig:e3:1}--\ref{fig:e3:2}, the concentration distribution of the invading fluid (water) exhibits a distinct pattern of concentric circular contours at $t = 3$ years. These results are physically reasonable for the following reasons: constant viscosity $\mu(c) = \mu_0$, homogeneous permeability field $k$, and only molecular diffusion effects. Furthermore, due to the effect of no-flow boundary conditions and the positions of injection/production wells, as time marches, the invading fluid moves faster along the diagonal (flow direction) of the reservoir, which can be observed in Figure \ref{fig:e3:3}--\ref{fig:e3:4} for $t=10$ years. These results are similar to those obtained in \cite{WH00,CD07,VPV21,Eymard19}.
\begin{figure}[!t]
  \centering
	\subfigure[Surface plot at $t=3$ years]{
		\includegraphics[width=0.44\textwidth]{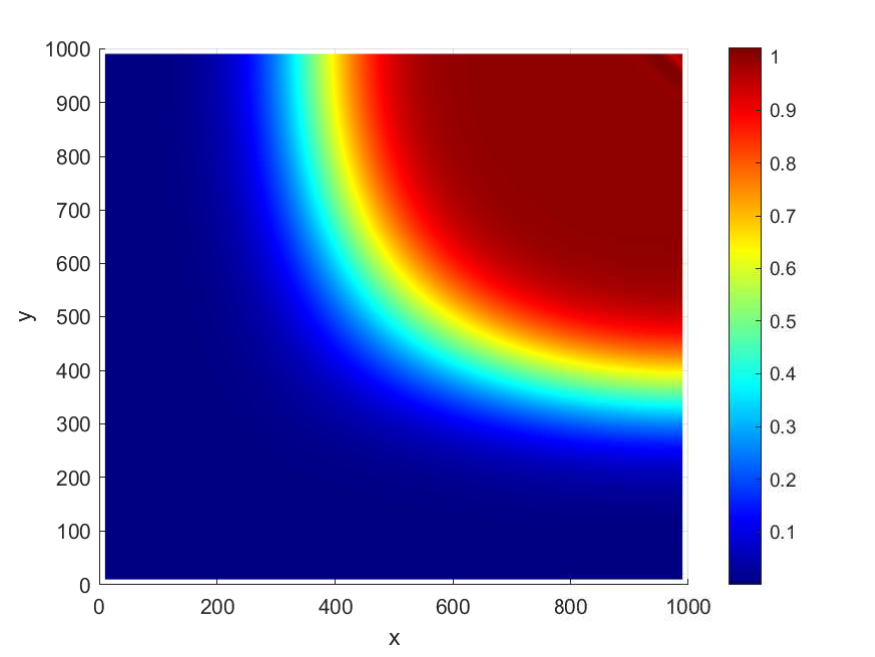}
		\label{fig:e3:1}
	}
	\subfigure[Contour plot at $t=3$ years]{
		\includegraphics[width=0.44\textwidth]{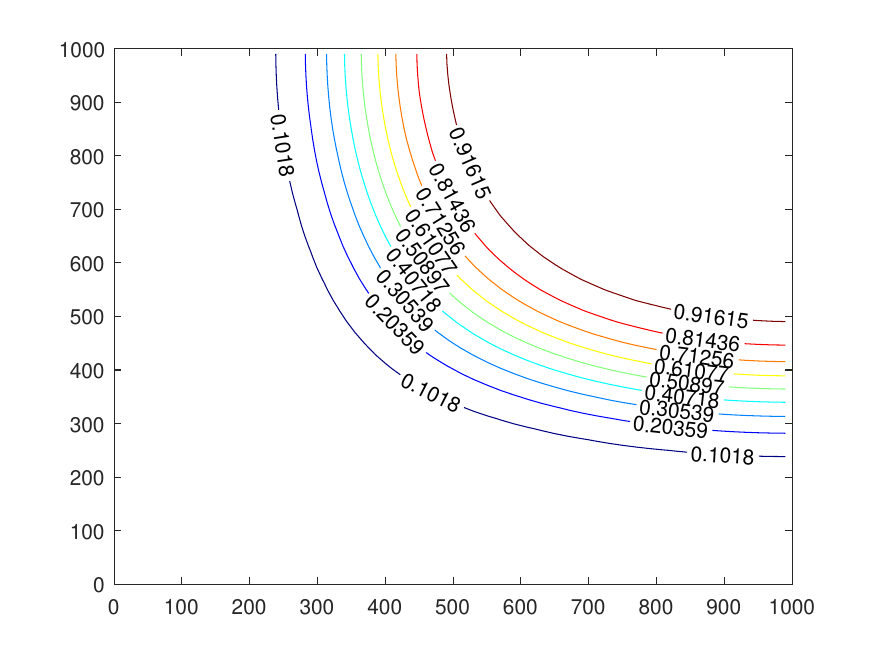}
		\label{fig:e3:2}
	}
	\subfigure[Surface plot at $t = 10$ years]{
		\includegraphics[width=0.44\textwidth]{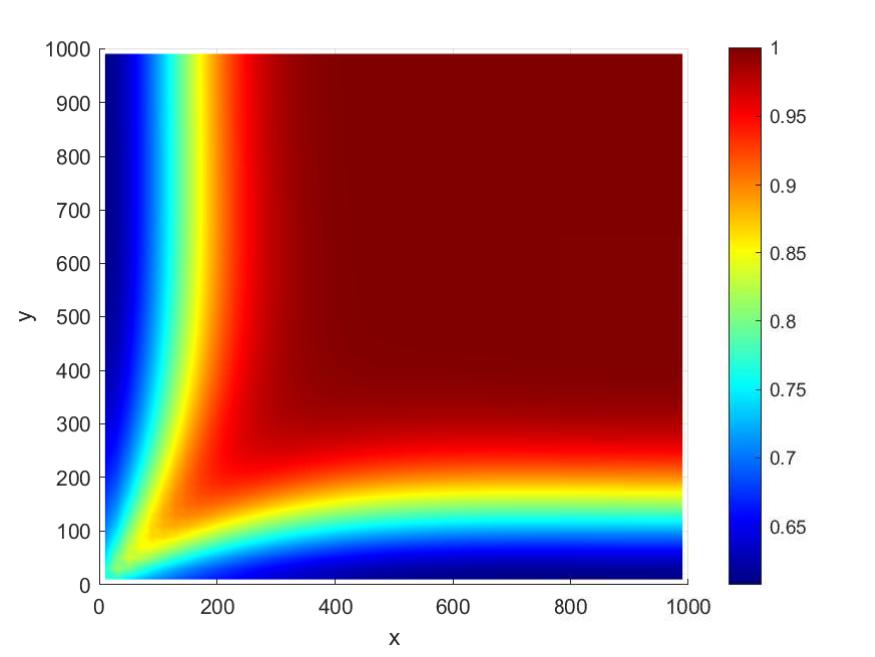}
		\label{fig:e3:3}
	}
	\subfigure[Contour plot at $t = 10$ years]{
		\includegraphics[width=0.44\textwidth]{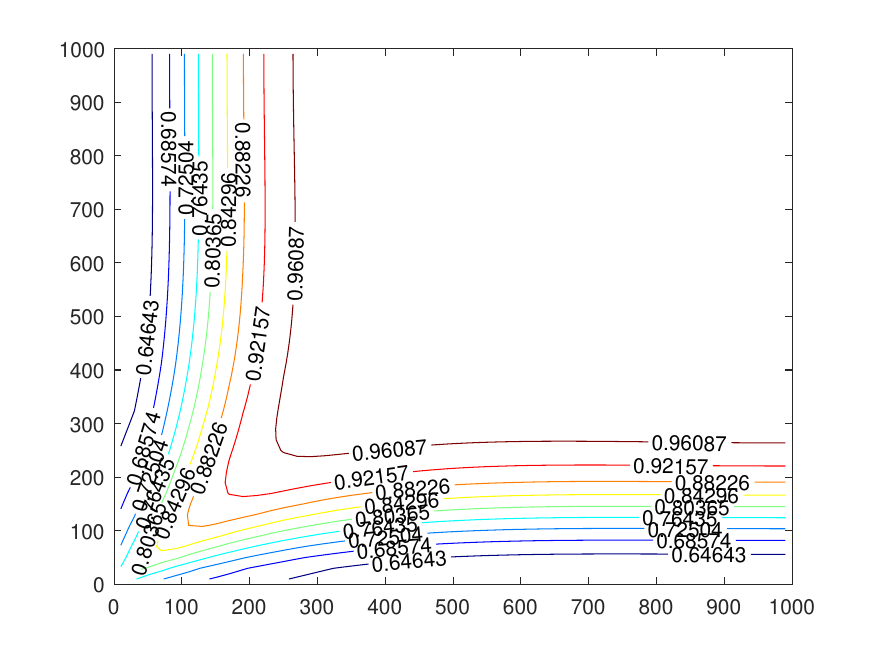}
		\label{fig:e3:4}
	}
	\caption{The concentration of the invading fluid at $t=3$ and $10$ years for Example \ref{exm:e3}.}	\label{fig:e3}
\end{figure}

\begin{example}[\cite{WH00,CD07,VPV21,Eymard19}]\label{exm:e4}	
    In this example, we simulate a challenging displacement scenario with an adverse mobility ratio $\mal M=41$ and an anisotropic dispersion in tensor form. The physical parameters are chosen as follows: 
    \[
        \phi(\bm{x}) = 0.1,\quad k(\bm{x}) = 80~ {\rm md}, \quad \alpha_{m} = 5~ {\rm ft}^{2}/{\rm day}, \quad \alpha_{l} = 50~ {\rm ft}/{\rm day},\quad  \alpha_{t} = 5~ {\rm ft}/{\rm day}.
    \]
\end{example}

Surface and contour plots of the invading fluid concentration at $t = 3$ and $10$ years are depicted in Figure \ref{fig:e4}. 
Due to the effect of the large adverse mobility ratio, the viscosity $\mu(c)$ given by the expression \eqref{mod:mu} changes rapidly across the steep fluid interface. Consequently, the Darcy velocity has a rapid change across the fluid interface. Therefore, as seen, the viscous fingering phenomena occurs, where the concentration front propagates significantly faster along the diagonal direction compared to the reference case $\mal M=1$ in Example \ref{exm:e3}.
This causes premature water breakthrough in the production well and reduces the swept area of injected water. As a result, the recovery rate declines.
\begin{figure}[!t]
	\centering
	\subfigure[Surface plot at $t=3$ years]{
		\includegraphics[width=0.45\textwidth]{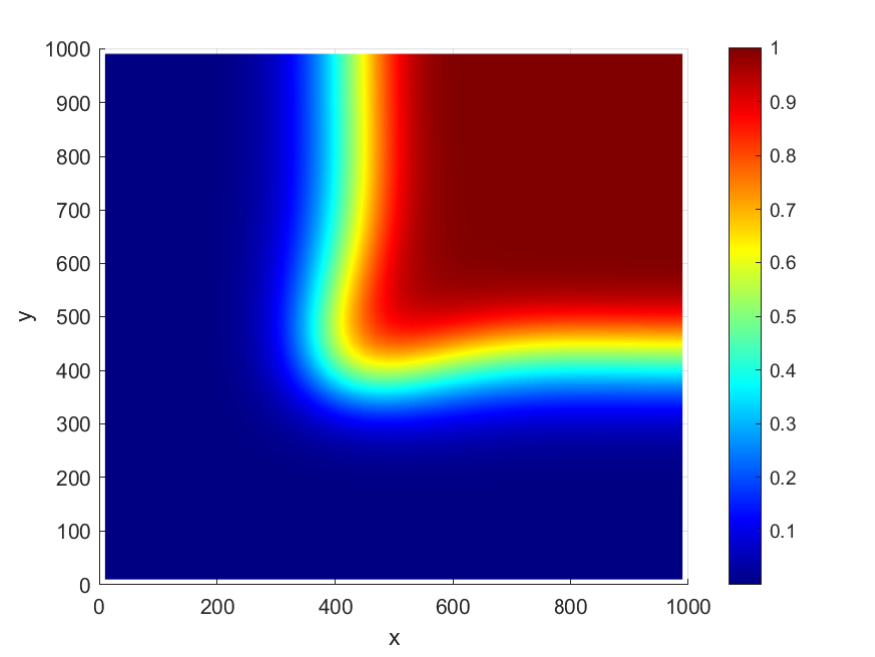}
		\label{fig:e4:1}
	}
	\subfigure[Contour plot at $t=3$ years]{
		\includegraphics[width=0.45\textwidth]{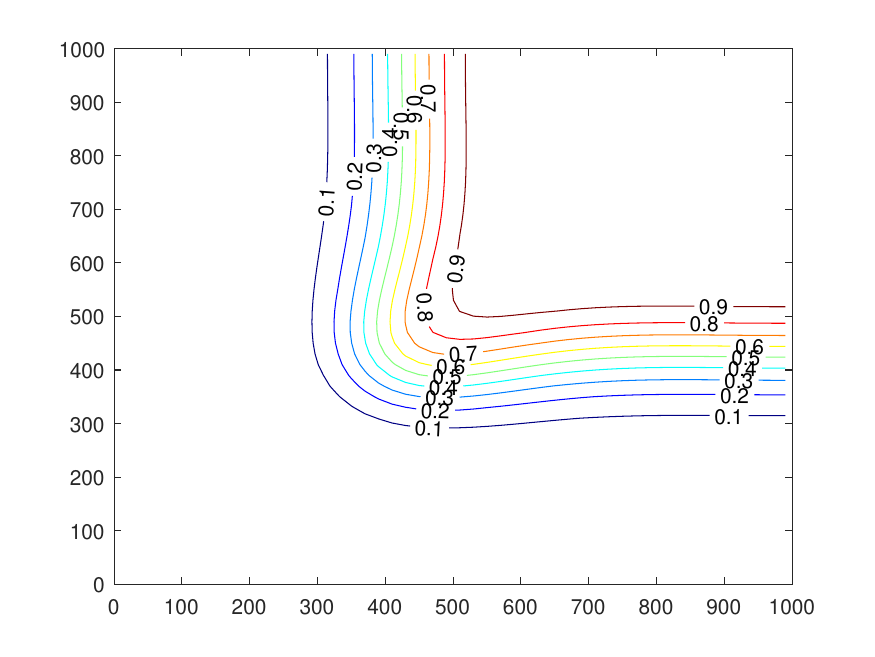}
		\label{fig:e4:2}
	}
	\subfigure[Surface plot at $t = 10$ years]{
		\includegraphics[width=0.45\textwidth]{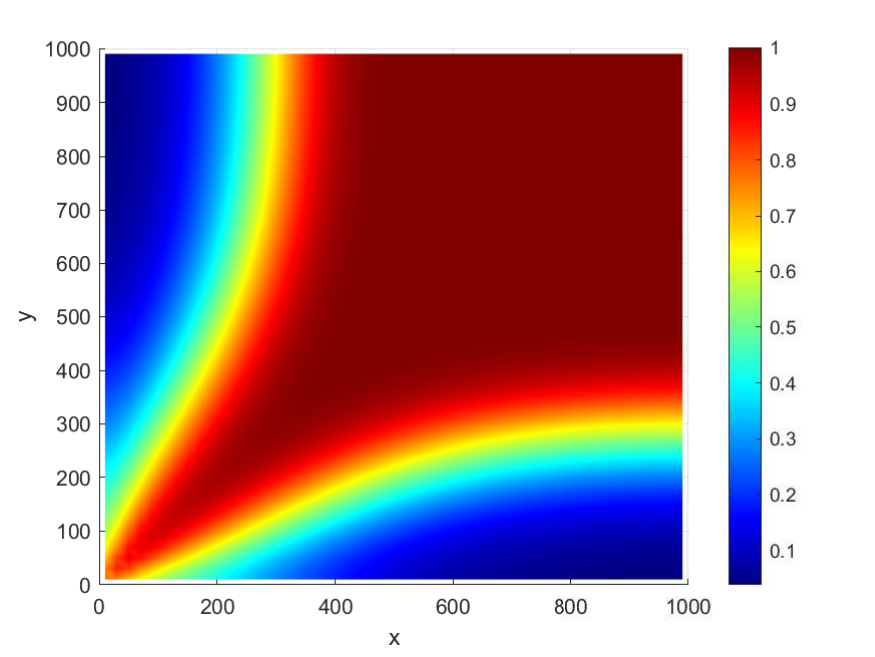}
		\label{fig:e4:3}
	}
	\subfigure[Contour plot at $t = 10$ years]{
		\includegraphics[width=0.45\textwidth]{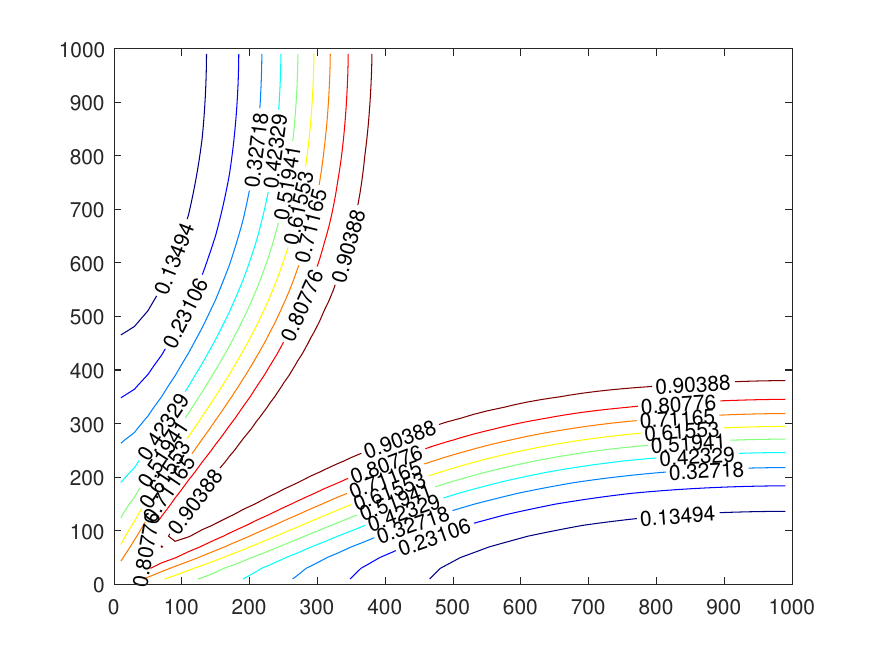}
		\label{fig:e4:4}
	}
	\caption{The concentration of the invading fluid at $t=3$ and $10$ years for Example \ref{exm:e4}.}
	\label{fig:e4}
\end{figure}

\begin{example}[\cite{WH00,CD07}]\label{exm:e5}
    In this example, we consider the numerical simulation of a miscible displacement in a heterogeneous porous media with a discontinuous permeability 
    \[
       k(\bm{x}) = \left\{ 
          \begin{aligned}
              & 80~ {\rm md}, && \Omega_{L}:=(0,1000)\times (0,500),\\
              & 20~ {\rm md}, && \Omega_{U}:=(0,1000)\times (500,1000),
          \end{aligned}
        \right.
    \]
    which is often encountered in many real production process. Other parameters remain consistent with those in Example \ref{exm:e4}.
\end{example}

Figure \ref{fig:e5} displays surface and contour plots of the invading fluid concentration at $t = 3$ and $10$ years. The concentration front initially propagates faster in the vertical direction than in the horizontal direction, as clearly depicted in Figure \ref{fig:e5:1}--\ref{fig:e5:2} .  This behavior stems from the higher permeability in the subdomain $\Omega_{L}$, which generates a higher Darcy velocity than in the subdomain $\Omega_{U}$. Figure \ref{fig:e5:3}--\ref{fig:e5:4} reveals that when the invading fluid reaches $\Omega_{L}$, it exhibits significantly accelerated horizontal movement relative to $\Omega_{U}$.
\begin{figure}[!t]
	\centering
	\subfigure[Surface plot at $t=3$ years]{
		\includegraphics[width=0.45\textwidth]{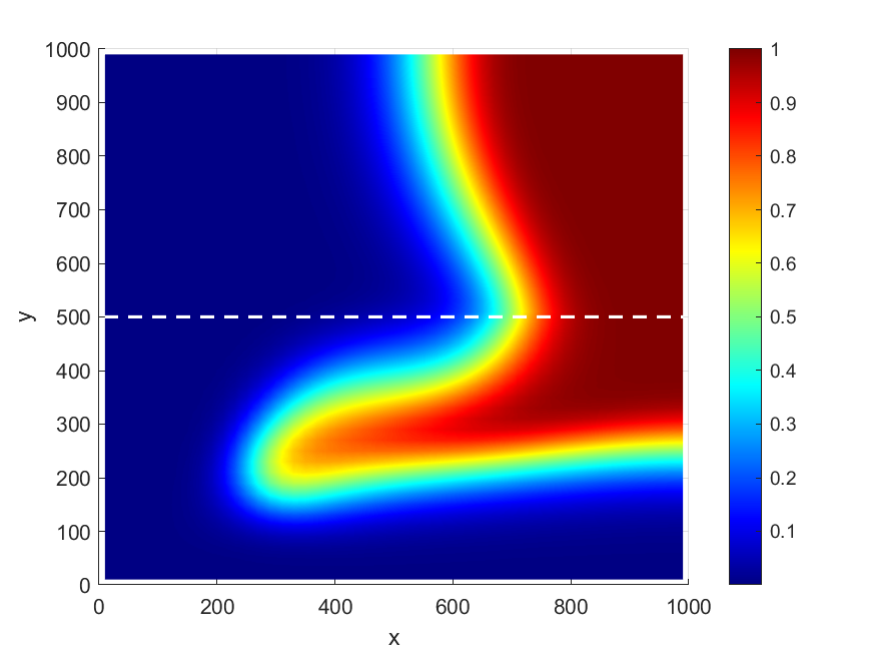}
		\label{fig:e5:1}
	}
	\hfill
	\subfigure[Contour plot at $t=3$ years]{
		\includegraphics[width=0.45\textwidth]{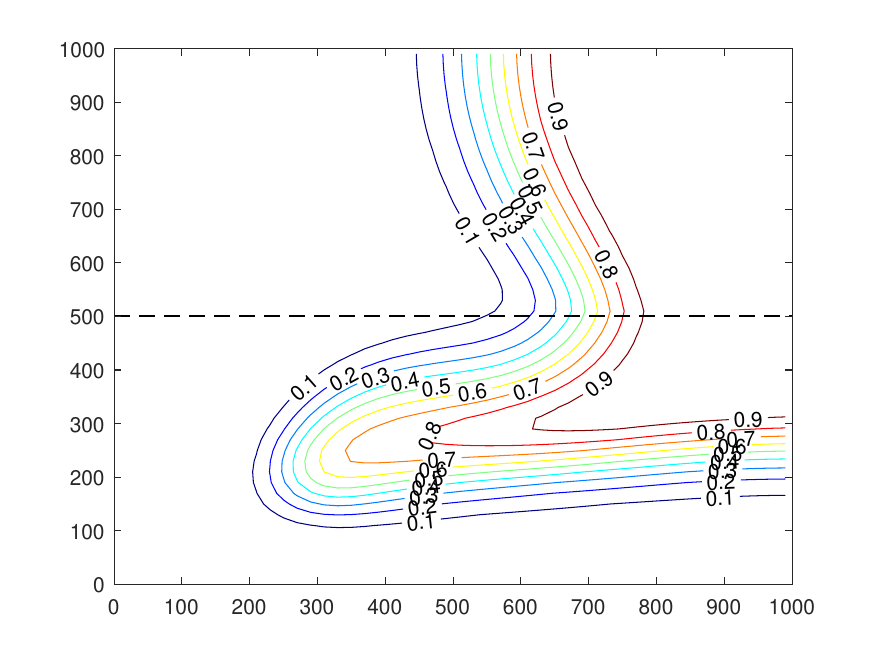}
		\label{fig:e5:2}
	}
	\vspace{0.1cm}
	\subfigure[Surface plot at $t = 10$ years]{
		\includegraphics[width=0.45\textwidth]{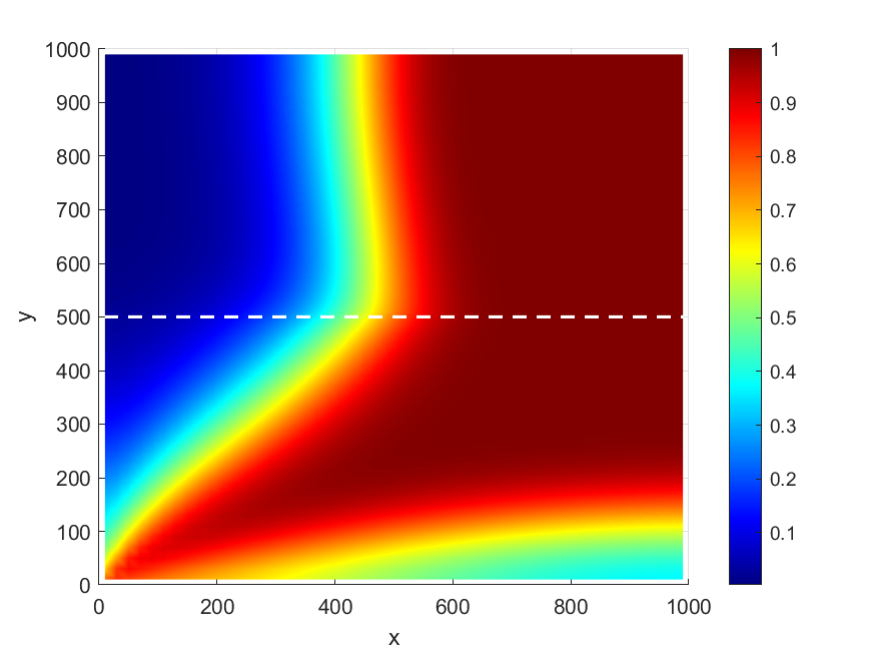}
		\label{fig:e5:3}
	}
	\hfill
	\subfigure[Contour plot at $t = 10$ years]{
		\includegraphics[width=0.45\textwidth]{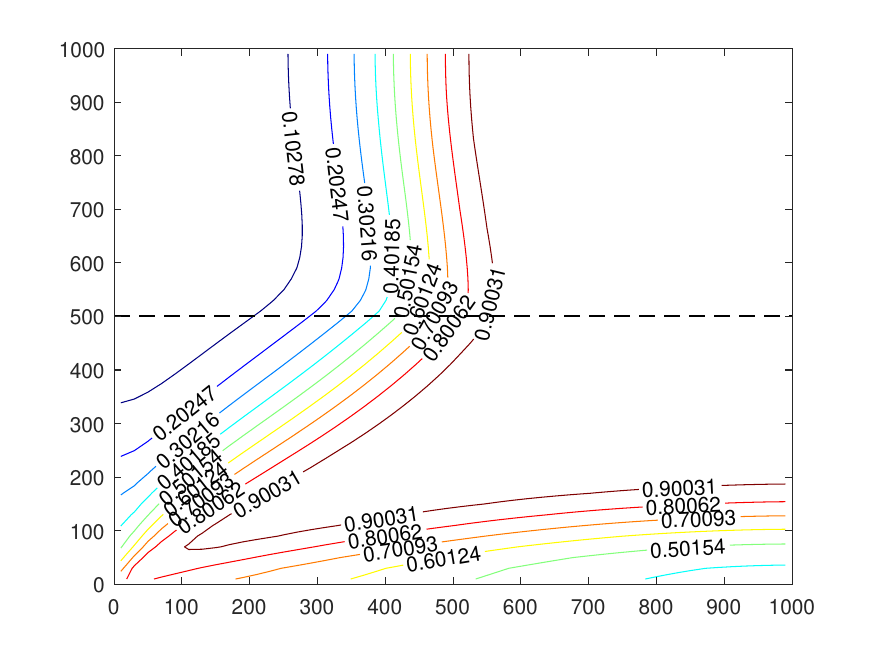}
		\label{fig:e5:4}
	}
	\caption{The concentration of the invading fluid at $t=3$ and $10$ years for Example \ref{exm:e5}.}
	\label{fig:e5}
\end{figure}

\begin{example}[\cite{WH00}]\label{exm:e6}
    In this example, we consider the porous media with piecewise structures, where the permeability and porosity of the media are respectively as
    \[
     \phi(\bm{x}) = \left\{ \begin{aligned}
         & 0.09, &\Omega_{1}:=(150,550)\times (150,550),\\
         & 0.1,  & \Omega_{2}:=\Omega \setminus \Omega_{1},
     \end{aligned} \right. \quad 
     k(\bm{x}) = \left\{ \begin{aligned}
         & 25~{\rm md}, &\Omega_{1},\\
         & 80~{\rm md},  & \Omega_{2},
     \end{aligned} \right. 
    \]
while all other parameters are identical to those of Example \ref{exm:e4}. 
\end{example}

Surface and contour plots of the invading fluid concentration at $t = 3$, $5$, $7$ and $10$ years are presented in Figure \ref{fig:e6}. Compared with Example \ref{exm:e4}, the injected fluid in this example exhibits much higher swept efficiency. Consequently, this improves the displacement efficiency and enhances the oil recovery. This indicates that the production well should preferably be placed in low-permeability zones if possible, thereby enhancing swept efficiency of the injected fluids. This is because well drilling (including injectors and producers) constitutes the primary cost in petroleum operations. In addition, this example demonstrates the feasibility of enhancing recovery rates through polymer-induced permeability alteration in reservoir porous media.
Given their high viscosity, polymers can selectively block or reduce permeability in certain pores or flow channels, effectively guiding fluid movement to optimize hydrocarbon recovery. In this context, the original fluid and porous media can be viewed as shown in Example \ref{exm:e4}, while by injecting polymers in a controlled manner, the properties of the porous media are modified to those presented in the current example. 
\begin{figure}[htbp]
	\centering
	\subfigure[Surface plot at $t=3$ years]{
		\includegraphics[width=0.45\textwidth]{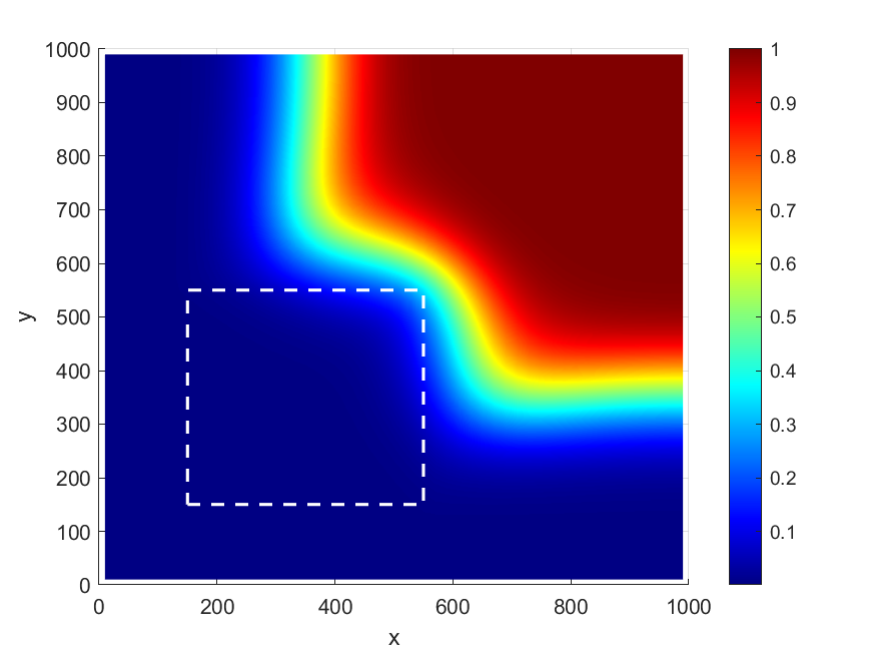}
		\label{fig:e6:1}
	}
	\subfigure[Contour plot at $t=3$ years]{
		\includegraphics[width=0.45\textwidth]{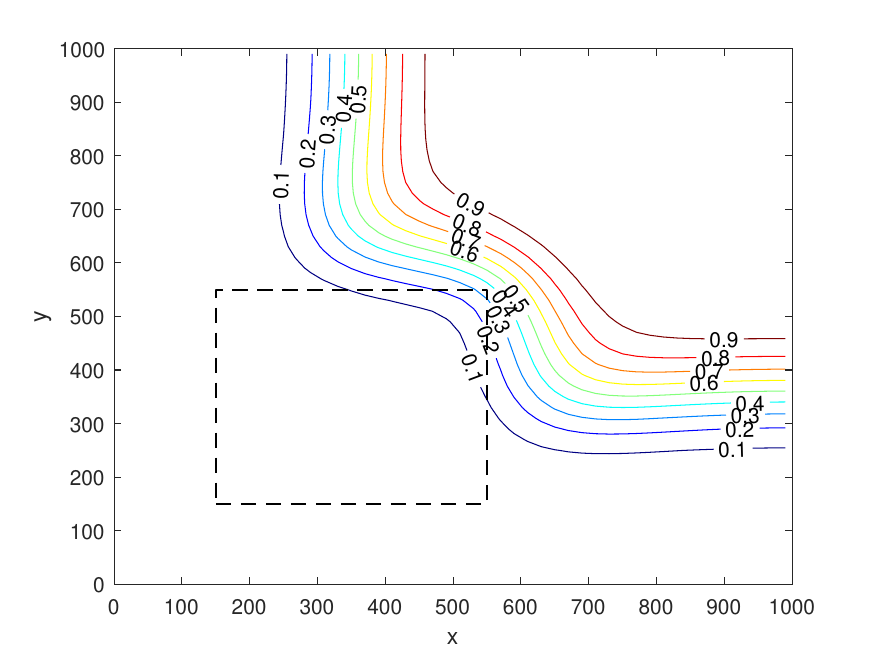}
		\label{fig:e6:2}
	}
	\subfigure[Surface plot at $t = 5$ years]{
		\includegraphics[width=0.45\textwidth]{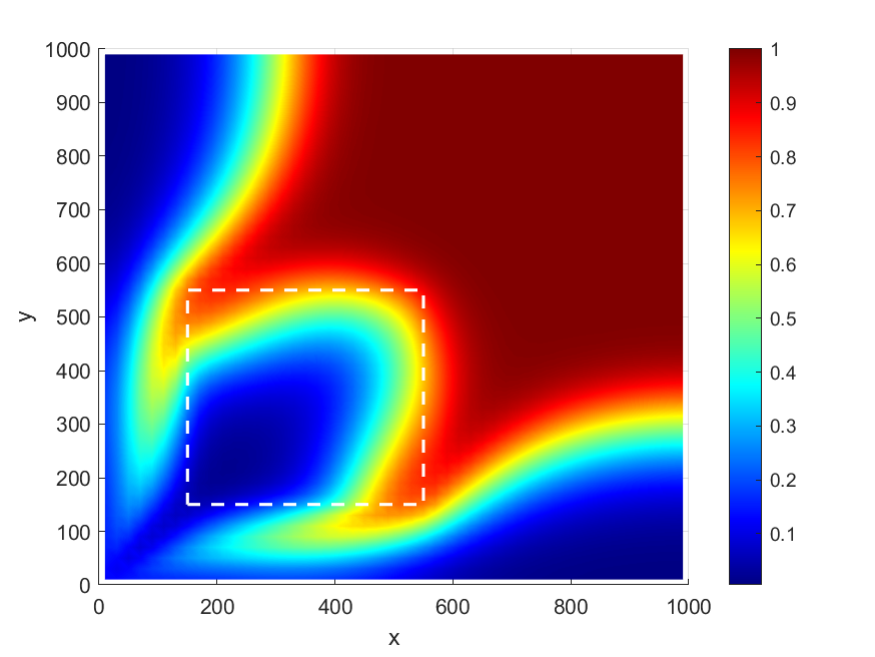}
		\label{fig:e6:3}
	}
	\subfigure[Contour plot at $t = 5$ years]{
		\includegraphics[width=0.45\textwidth]{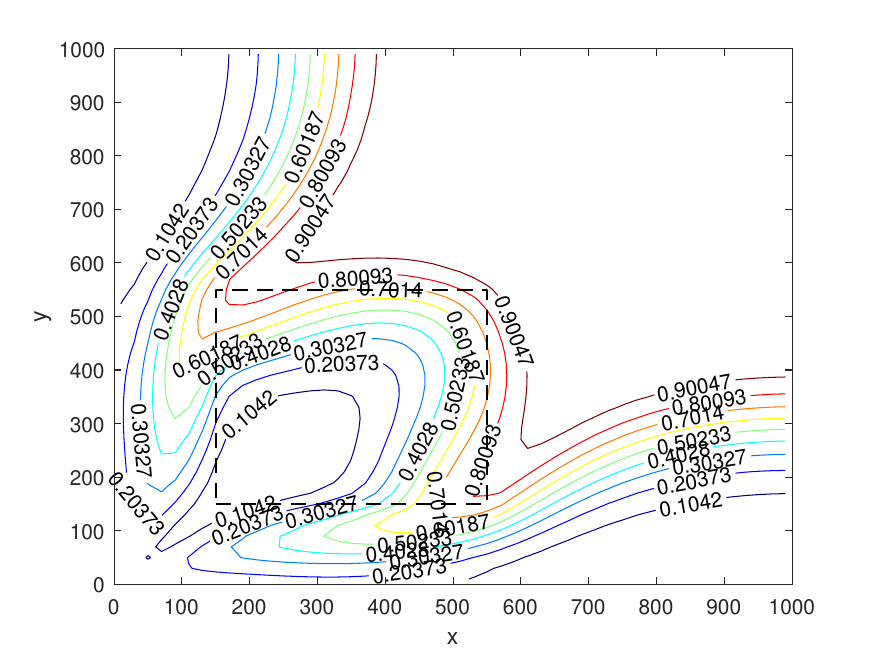}
		\label{fig:e6:4}
	}
	\subfigure[Surface plot at $t = 7$ years]{
		\includegraphics[width=0.45\textwidth]{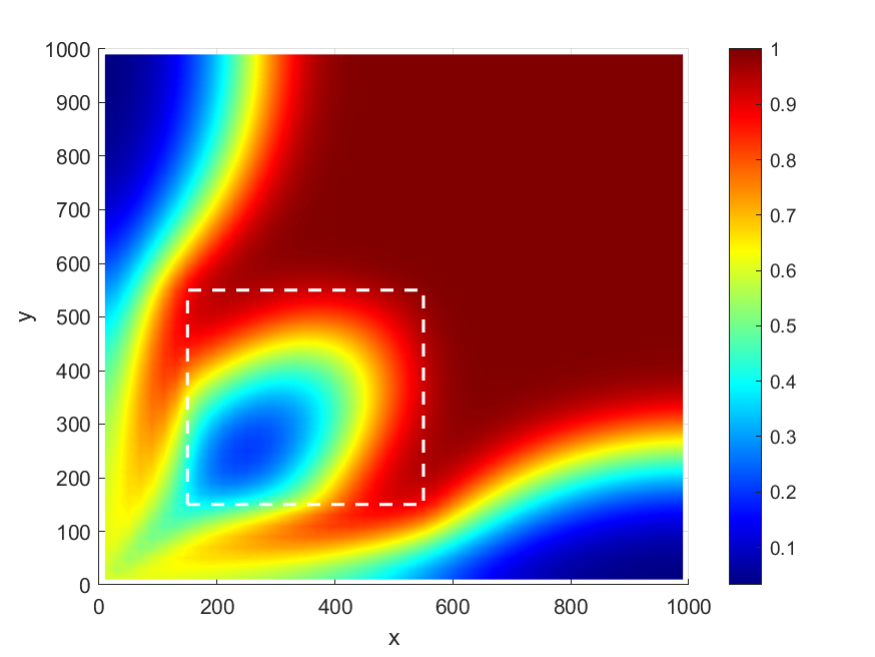}
		\label{fig:e6:5}
	}
	\subfigure[Contour plot at $t = 7$ years]{
		\includegraphics[width=0.45\textwidth]{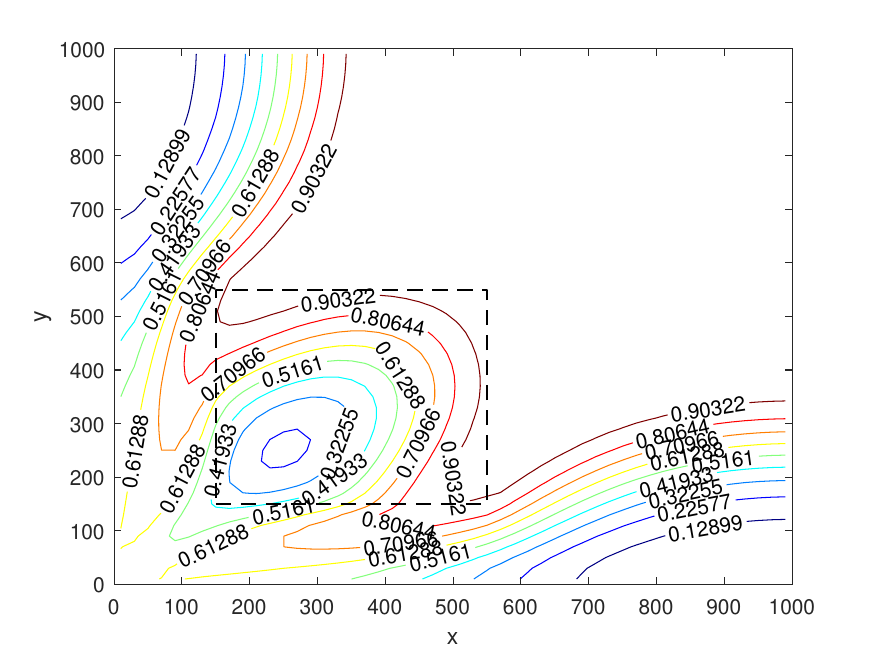}
		\label{fig:e6:6}
	}
	\subfigure[Surface plot at $t = 10$ years]{
		\includegraphics[width=0.45\textwidth]{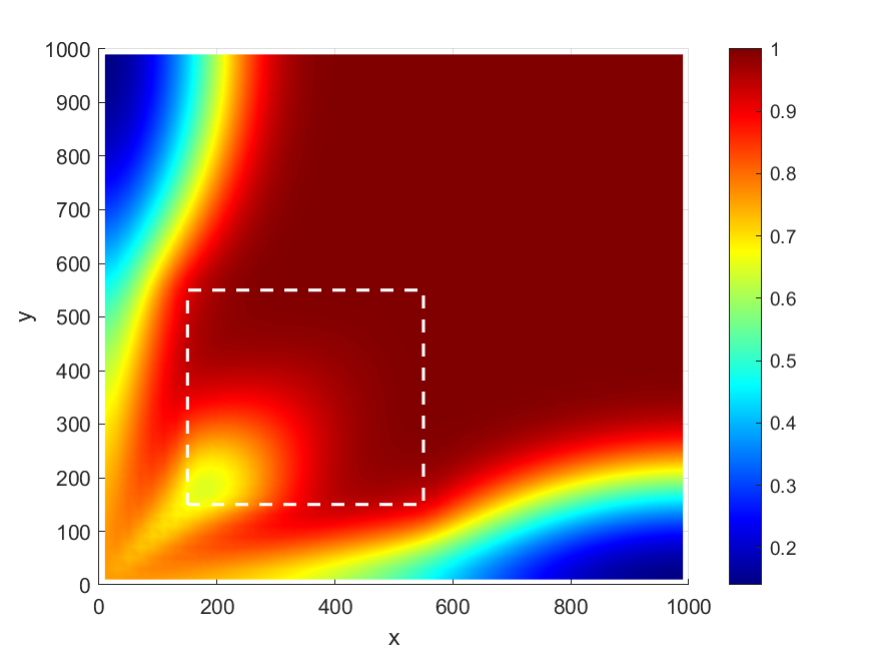}
		\label{fig:e6:7}
	}
	\hfill
	\subfigure[Contour plot at $t = 10$ years]{
		\includegraphics[width=0.45\textwidth]{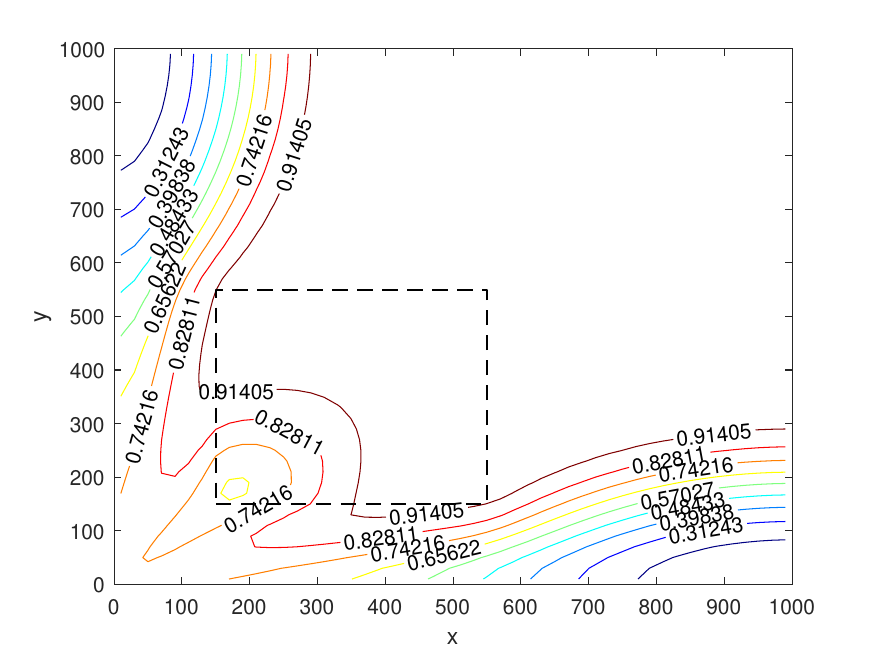}
		\label{fig:e6:8}
	}
	\caption{The concentration of the invading fluid at $t=3, 5, 7$ and $10$ years for Example \ref{exm:e6}.}
	\label{fig:e6}
\end{figure}

Finally, we also test the absolute mass errors for all the above examples. As illustrated in Figure \ref{fig:mass}, it demonstrates that our numerical scheme maintains stringent mass conservation when applied to real-world physical applications.
In summary, the numerical results in Examples \ref{exm:e3}--\ref{exm:e6} show that the proposed MC-MTS-CBCFD scheme can effectively simulate fluid flow in porous media with highly complex structures. Even when relatively coarse spatial grids and time steps are employed, it can still yield accurate and physically reasonable numerical solutions. The developed methodology significantly improves computational efficiency while simultaneously providing valuable support to predict and enhance oil recovery performance. In addition, the simulated results and physical phenomena are very similar to those obtained in \cite{WH00,CD07,VPV21,Eymard19}, which also demonstrate the reliability of the proposed method.
\begin{figure}[!t] 
    \centering
    \includegraphics[width=0.5\textwidth]{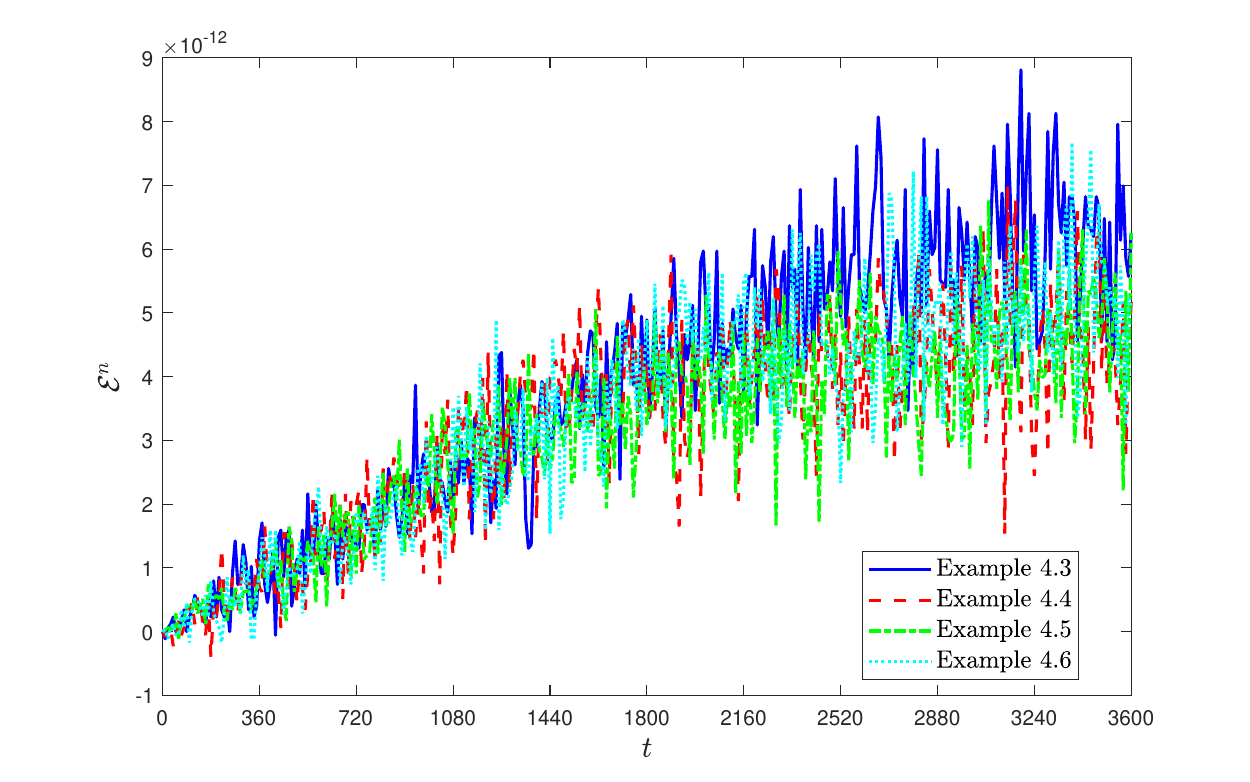}
	\caption{The mass errors for Examples \ref{exm:e3}--\ref{exm:e6}.}
	\label{fig:mass}
\end{figure}

\section{Concluding remarks}\label{sec:conclusion}
Second-order staggered FD methods have been well established for reservoir numerical simulation \cite{Rui20,LA22,LA24}, however, high-order FD methods have not yet been successfully discussed. In this paper, we developed a multi-time-step compact BCFD algorithm, namely MC-MTS-CBCFD, for the incompressible miscible displacement problem in porous media. The proposed approach possesses four distinguished advantages: (i) it can achieve fourth-order approximations for all physical variables, effectively eliminating the convergence order reduction for the velocity commonly encountered in traditional FD methods; (ii) it completely circumvents the need to solve saddle-point systems and the inversion of the tensor-form nonlinear diffusion coefficient, thereby avoiding the computational challenges inherent in classical mixed formulations; (iii) it preserves discrete mass conservation, a crucial property for practical long-term simulation; (iv) it significantly improves computational efficiency by using the multi-time-step approach, which is consistent with the actual multi-time scale behavior that the evolution of the velocity field is observed to be considerably slower than that of the concentration field. We established rigorous error analysis for the proposed MC-MTS-CBCFD scheme, and proved its unique solvability through the Browder’s fixed point theorem. Numerical experiments demonstrate the high accuracy, computational efficiency, and strong potential in 'real' simulation of various incompressible displacement scenarios. 

There are still many issues to be addressed. For example, the concentration $c$, defined as the volume fraction of injected fluid within the mixed phase, must satisfy the physical constraint $0 \leq c  \leq 1$. This bound-preserving property is expected to be maintained at the discrete level, see \cite{CXY19,FGKY22} and related applications \cite{ZXX18,XY19,SJ22,KJS24}. Our future work will focus on constructing efficient and high-order bound-preserving numerical methods for both the incompressible and compressible miscible displacement models. In addition, as seen from Tables \ref{tab:e1:cpu1}--\ref{tab:e1:cpu2}, the CPU time is greatly reduced for the velocity/pressure simulation via the multi-time-step strategy, however, it is still computationally expensive for the concentration equation. Thus, improving the computational efficiency of the concentration simulation is also a key research focus. Some possibilities include the development of Laplace-modified stabilized technique \cite{Dendy75,FYH25} or adoption of implicit pressure explicit concentration (IMPEC) time-marching approach \cite{Chen19,FGKY22,KXY24} for compressible miscible displacements in porous media. 

\section*{CRediT authorship contribution statement}
\textbf{Xiaoying Wang}: Methodology, Formal analysis, Software, Writing- Original draft.
\textbf{Hongxing Rui}: Methodology, Writing- Reviewing and Editing, Funding acquisition.
\textbf{Hongfei Fu}: Conceptualization, Supervision, Methodology, Writing- Reviewing and Editing, Funding acquisition.

\section*{Declaration of competing interest}
The authors declare that they have no competing interests.

\section*{Data availability } Data will be made available on request.

\section*{Acknowledgments}
This work was supported in part by the National Natural Science Foundation of China (No. 12131014), by the Shandong Provincial Natural Science Foundation (No. ZR2024MA023), by the Fundamental Research Funds for the Central Universities (No. 202264006) and by the OUC Scientific Research Program for Young Talented Professionals.

\appendix

\section{Proof of Lemma \ref{lem:err:pre}}\label{App:A}
\setcounter{equation}{0}
\renewcommand\theequation{A.\arabic{equation}}

First, applying the operator $\mal{L}^{-1}$ to both sides of \eqref{err:cu:p:1}, and taking the inner product for the resulting equation with $\ovl{e}_{c,p}^{1/2,*}$ in the sense of $(\cdot,\cdot)_{\rm M}$, we have
\begin{equation}\label{errcu:p:1}
   \begin{aligned}
		& \left( \phi D_{t} e_{c,p}^{1/2,*}  ,\ovl{e}_{c,p}^{1/2,*} \right)_{\rm M} + \left( \big[ \mal{L}_{x}^{-1} \de_{x} \ovl{e}_{w^{x},p} + \mal{L}_{y}^{-1} \de_{y} \ovl{e}_{w^{y},p} \big]^{1/2,*} ,\ovl{e}_{c,p}^{1/2,*}\right)_{\rm M}  \\
		&= \left( [q_{P} \ovl{e}_{c,p}]^{1/2,*}  ,\ovl{e}_{c,p}^{1/2,*}\right)_{\rm M} + \left( \mal{L}^{-1} E_{1,p}^{1/2,*},\ovl{e}_{c,p}^{1/2,*} \right)_{\rm M} \leq K\| e_{c,p}^{1,*} \|_{\rm M}^{2} +  K\left( (\De t_{p})^{4} + h^{8} \right), 
	\end{aligned}
\end{equation}
where $e_{c}^{0}=0$, the truncation error in \eqref{err:truc:p}, and Lemma \ref{lem:op:L} are used in the last step.

Analogous to \eqref{erru:2}, using Lemma \ref{lem:dis} and error equations \eqref{err:cu:p:2}--\eqref{err:cu:p:5}, the second term on the left-hand side of \eqref{errcu:p:1} can be written as
\begin{equation}\label{errcu:p:2}
	\begin{aligned}
		&\left( \big[ \mal{L}_{x}^{-1} \de_{x} \ovl{e}_{w^{x},p} + \mal{L}_{y}^{-1} \de_{y} \ovl{e}_{w^{y},p} \big]^{1/2,*} ,\ovl{e}_{c,p}^{1/2,*}\right)_{\rm M}\\
		&= - \left( \ovl{e}_{w^{x},p}^{1/2,*},  \mal{L}_{x}^{-1} \de_{x} \ovl{e}_{c,p}^{1/2,*}\right)_{x} - \left(   \ovl{e}_{w^{y},p}^{1/2,*} , \mal{L}_{y}^{-1} \de_{y} \ovl{e}_{c,p}^{1/2,*}\right)_{y}\\
        & = \left(  \ovl{\bm{e}}_{\bm{w},p}^{1/2,*},  D^{-1} \ovl{\bm{e}}_{\bm{w},p}^{1/2,*} \right)_{\rm T} + I_{2}  =: \|\ovl{\bm{e}}_{\bm{w},p}^{1/2,*}\|_{{\rm T},D^{-1}}^{2} + I_{2},
	\end{aligned}
\end{equation}
where
\begin{equation*}
    \begin{aligned}
        & I_{2}
        :=\left(   \ovl{e}_{\bm{w},p}^{1/2,*} , D^{-1}\ovl{\bm{\eta}}_{p}^{1/2,*}\right)_{\rm T} 
                 - \left(   \ovl{e}_{w^{x},p}^{1/2,*} ,   \mal{L}_{x}^{-1} \ovl{E}_{2,p}^{x,1/2,*}\right)_{x}
		- \left(   \ovl{e}_{w^{y},p}^{1/2,*},  \mal{L}_{y}^{-1} \ovl{E}_{2,p}^{y,1/2,*} \right)_{y},\\
        & \bm{\eta}_{p} =(\eta_{p}^{x},~\eta_{p}^{y}):= (  U_{\#}^{x} \mal{T}_{x} C_{p} - u_{\#}^{x} c_{p} - E_{3,p}^{x} ,~    U_{\#}^{y} \mal{T}_{y} C_{p} - u_{\#}^{y} c_{p} - E_{3,p}^{y} ).
    \end{aligned}
\end{equation*}

Utilizing Corollary \ref{col:erru0} and the inverse inequality, it can be deduced that  
\begin{equation}\label{bound:u0}
	\| \bm{U}^{0}\|_{\infty} \leq \| \bm{u}^{0}\|_{\infty} + \| \bm{e}_{\bm{u}}^{0}\|_{\infty} 
    \leq K_{1} + Kh^{-1} \| \bm{e}_{\bm{u}}^{0}\|_{\rm T} 
    \leq K_{1} + Kh^{3}  \leq K_{1} + 1,
\end{equation}
for sufficiently small $h$, which implies that $\bm{U}^{0}$ is uniformly bounded. 
Further, Lemmas \ref{lem:op:t}--\ref{lem:t:err} and Corollary \ref{col:erru0} imply that
\begin{equation}\label{errcu:p:3}
	\begin{aligned}
		\|\ovl{\bm{\eta}}_{p}^{1/2,*}\|_{\rm T} 
        & = \sum_{\kappa=x,y} \|\ovl{c}_{p}^{1/2}e_{u^{\kappa}}^{0} + U^{\kappa,0}[\ovl{c}_{p}-\mal{T}_{\kappa}\ovl{c}_{p}]^{1/2} + U^{\kappa,0}\mal{T}_{\kappa}\ovl{e}_{c,p}^{1/2,*} + \ovl{E}_{3,p}^{\kappa,1/2,*}\|_{\kappa} \\
		& \leq K\|e_{c,p}^{1,*}\|_{\rm M} + K \De t_{p} + Kh^{4}.
	\end{aligned}
\end{equation}
Then, utilizing estimate \eqref{errcu:p:3} and the Cauchy-Schwarz inequality, the $I_{2}$ term can be estimated as
\begin{equation}\label{errcu:p:4}
	\begin{aligned}
		I_{2}
		& \leq \left(\|\ovl{e}_{w^{x},p}^{1/2,*}\|_{x} + \|\ovl{e}_{w^{y},p}^{1/2,*}\|_{y}\right) \left( K\|e_{c,p}^{1,*}\|_{\rm M} + K\De t_{p} + Kh^{4}  \right) \\
		& \leq \f{1}{2} \|\ovl{\bm{e}}_{\bm{w},p}^{1/2,*}\|_{{\rm T},D^{-1}}^{2} + K \|e_{c,p}^{1,*}\|_{\rm M}^{2} + K \De t_{p}\|\ovl{\bm{e}}_{\bm{w},p}^{1/2,*}\|_{\rm T} + Kh^{8}.
	\end{aligned}
\end{equation}
Thus, substituting \eqref{errcu:p:2} and \eqref{errcu:p:4} into \eqref{errcu:p:1}, and using $e_{c}^{0}=0$, we obtain
\begin{equation}\label{errcu:p:5}
    \begin{aligned}
        &\f{\phi_{*}}{2\De t_{p}}\|e_{c,p}^{1,*}\|_{\rm M}^{2} + \f{1}{2} \|\ovl{\bm{e}}_{\bm{w},p}^{1/2,*}\|_{{\rm T},D^{-1}}^{2} \\
        &\leq K\|e_{c,p}^{1,*}\|_{\rm M}^{2}  + K \De t_{p}\|\ovl{\bm{e}}_{\bm{w},p}^{1/2,*}\|_{\rm T} +  K\left( ( \De t_{p} )^{4} + h^{8} \right).
    \end{aligned}
\end{equation}

Next, in order to achieve global second-order temporal accuracy, applying the operator $\mal{L}^{-1}$ to both sides of \eqref{err:cu:p:1}, and taking the inner product for the resulting equation with $D_{t} e_{c,p}^{1/2,*}$ in the sense of $(\cdot,\cdot)_{\rm M}$ gives us
\begin{equation}\label{errcu:p:6}
	\begin{aligned}
		& \left( \phi D_{t} e_{c,p}^{1/2,*}, D_{t} e_{c,p}^{1/2,*} \right)_{\rm M}  
        + \left( \big[ \mal{L}_{x}^{-1} \de_{x} \ovl{e}_{w^{x},p} + \mal{L}_{y}^{-1} \de_{y} \ovl{e}_{w^{y},p} \big]^{1/2,*}, D_{t} e_{c,p}^{1/2,*} \right)_{\rm M} \\
		& = \left( [q_{P} \ovl{e}_{c,p}]^{1/2,*}, D_{t}e_{c,p}^{1/2,*} \right)_{\rm M}  
          + \left( \mal{L}^{-1}E_{1,p}^{1/2,*}, D_{t} e_{c,p}^{1/2,*} \right)_{\rm M} \\
		& \leq \f{ \phi_{*} }{2}\|D_{t} e_{c,p}^{1/2,*}\|_{\rm M}^{2} 
        + K\| e_{c,p}^{1,*}\|_{\rm M}^{2} 
        + K\left( ( \De t_{p} )^{4} + h^{8} \right),
	\end{aligned}
\end{equation}
where the truncation error in \eqref{err:truc:p} and Lemma \ref{lem:op:L}  are utilized in the last step.

Similar to \eqref{errcu:p:2}, the second left-hand side term of \eqref{errcu:p:6} can be transformed into
\begin{equation}\label{errcu:p:7}
	\begin{aligned}
		&\left( \big[ \mal{L}_{x}^{-1} \de_{x} \ovl{e}_{w^{x},p} + \mal{L}_{y}^{-1} \de_{y} \ovl{e}_{w^{y},p} \big]^{1/2,*}, 
                D_{t} e_{c,p}^{1/2,*}\right)_{\rm M}\\
		& = - \left( \ovl{e}_{w^{x},p}^{1/2,*}, \mal{L}_{x}^{-1} \de_{x} D_{t} e_{c,p}^{1/2,*}\right)_{x}
            - \left( \ovl{e}_{w^{y},p}^{1/2,*}, \mal{L}_{y}^{-1} \de_{y} D_{t} e_{c,p}^{1/2,*}\right)_{y}\\
		&= \f{1}{2\De t_{p}} \left[ \|\bm{e}_{\bm{w},p}^{1,*}\|_{{\rm T},D^{-1}}^{2} - \|\bm{e}_{\bm{w},p}^{0}\|_{{\rm T},D^{-1}}^{2} \right] + I_{3},
	\end{aligned}
\end{equation}
where 
\begin{equation*}
    I_{3}:= \left( \ovl{e}_{\bm{w},p}^{1/2,*},  D^{-1} D_{t} \bm{\eta}_{p}^{1/2,*} \right)_{\rm T} 
          - \left( \ovl{e}_{w^{x},p}^{1/2,*},   \mal{L}_{x}^{-1} D_{t} E_{2,p}^{x,1/2,*} \right)_{x}
		 - \left( \ovl{e}_{w^{y},p}^{1/2,*},   \mal{L}_{y}^{-1} D_{t} E_{2,p}^{y,1/2,*} \right)_{y}.
\end{equation*}
Analogous to \eqref{errcu:p:3}, we have
\begin{equation}\label{errcu:p:8}
	\begin{aligned}
		 \|D_{t} \bm{\eta}_{p}^{1/2,*}\|_{\rm T} 
        & = \sum_{\kappa=x,y}\|D_{t} c_{p}^{1/2}e_{u^{\kappa}}^{0} + U^{\kappa,0}D_{t}[ c_{p}-\mal{T}_{\kappa}c_{p} ]^{1/2} + U^{\kappa,0}\mal{T}_{\kappa} D_{t} e_{c,p}^{1/2,*} + D_{t}E_{3,p}^{\kappa,1/2,*}\|_{\kappa} \\
		&\leq K\| D_{t} e_{c,p}^{1/2,*}\|_{\rm M} + K \De t_{c} + Kh^{4},
	\end{aligned}
\end{equation}
and then similar to \eqref{errcu:p:4}, combining the estimates \eqref{errcu:p:8} and \eqref{err:truc}, the $I_{3}$ term can be estimated as
\begin{equation}\label{errcu:p:9}
	\begin{aligned}
		I_{3}
		& \leq \left(\|\ovl{e}_{w^{x},p}^{1/2,*}\|_{x} + \|\ovl{e}_{w^{y},p}^{1/2,*}\|_{y}\right) 
               \left( K\| D_{t} e_{c,p}^{1/2,*}\|_{\rm M} + K\De t_{p} + Kh^{4}  \right) \\
		& \leq K \|\ovl{\bm{e}}_{\bm{w},p}^{1/2,*}\|_{{\rm T},D^{-1}}^{2} 
            + \f{\phi_{*}}{2} \|D_{t} e_{c,p}^{1/2,*}\|_{\rm M}^{2} 
            + K \De t_{p} \|\ovl{\bm{e}}_{\bm{w},p}^{1/2,*}\|_{\rm T} + Kh^{8}.
	\end{aligned}
\end{equation}

By inserting the estimates \eqref{errcu:p:7} and \eqref{errcu:p:9} into \eqref{errcu:p:6}, and using Lemma \ref{lem:op:L}, we get
\begin{equation}\label{errcu:p:10}
      \begin{aligned}
          &\f{1}{2\De t_{p}} \left[ \|\bm{e}_{\bm{w},p}^{1,*}\|_{{\rm T},D^{-1}}^{2} - \|\bm{e}_{\bm{w}}^{0}\|_{{\rm T},D^{-1}}^{2} \right] \\
	   & \leq K \|\ovl{\bm{e}}_{\bm{w},p}^{1/2,*}\|_{{\rm T},D^{-1}}^{2} 
          + K\De t_{p}\|\ovl{\bm{e}}_{\bm{w},p}^{1/2,*}\|_{\rm T}
          + K\| e_{c,p}^{1,*}\|_{\rm M}^{2}  +  K\left( ( \De t_{p} )^{4} + h^{8} \right).
      \end{aligned}
\end{equation}
Now, adding \eqref{errcu:p:10} and \eqref{errcu:p:5} together and multiplying both sides of the resulting equation by $2\De t_{p}$, for $\De t_{p} \leq \tau_{0}$ small enough, the following estimate holds
\begin{equation}\label{err:c1}
	\|e_{c,p}^{1,*}\|_{\rm M}^{2} +  \De t_{p} \|\bm{e}_{\bm{w},p}^{1/2,*}\|_{\rm T}^{2} + \|\bm{e}_{\bm{w},p}^{1,*}\|_{\rm T}^{2}   \leq K\left( ( \De t_{p} )^{4} + h^{8} \right).
\end{equation}

Finally, similar to the proof of Lemma \ref{lem:err:pu}, using error equations \eqref{err:cu:p:6}--\eqref{err:cu:p:8} and truncation errors \eqref{err:truc:p}, and along with \eqref{err:c1}, we derive
\begin{equation*}
	\|e_{p}^{1,*}\|_{\rm M}^{2} + |e_{p}^{1,*}|_{1}^{2} + \| 
	\bm{e}_{\bm{u}}^{1,*} \|_{\rm T}^{2} 
	\leq  K\| e_{c,p}^{1,*} \|_{\rm M}^{2} + Kh^{8} \leq K\left( (\De t_{p})^{4}  +  h^{8}\right).
\end{equation*}
Thus, we complete the proof.

\bibliographystyle{spmpsci}   
\bibliography{reference} 

\end{document}